\documentclass[11pt,reqno]{amsart} 
\usepackage{amsfonts, amsmath, amssymb, amsthm, color}
\usepackage[margin=0.97in]{geometry}
\usepackage{esint}

\newtheorem{thm}{Theorem}[section]

\newtheorem{prop}[thm]{Proposition}
\newtheorem{lemma}[thm]{Lemma}

\newtheorem{remark}[thm]{Remark}
\theoremstyle{definition}

\newcommand{\N}{\mathbb{N}}
\newcommand{\R}{\mathbb{R}}

\newcommand{\dist}{\textnormal{dist}}

\allowdisplaybreaks
\numberwithin{equation}{section}

\makeatletter
\@namedef{subjclassname@2020}{%
\textup{2020} Mathematics Subject Classification}
\makeatother

\begin{document}
\title[Sinh-Gordon flow]{Critical sinh-Gordon flow with non-negative \\weight functions}

\author{Qiang Fei}
\address[Qiang Fei]{School of Mathematics and Statistics, Central South University,
		Changsha 410083, Hunan, People's Republic of China}
\email{math\_qiangfei@163.com}

\author{Aleks Jevnikar}
 \address[Aleks Jevnikar]{Department of Mathematics, Computer Science and Physics, University of Udine, Via delle Scienze 206, 33100 Udine, Italy}
\email{aleks.jevnikar@uniud.it}

\author{Sang-Hyuck Moon}
\address[Sang-Hyuck Moon]{Sang-Hyuck Moon, Department of Mathematics and Institute of Mathematical Science, Pusan National University, Busan 46241, Republic of Korea}
\email{shmoon@pusan.ac.kr}

\thanks{A.J. is partially supported by INdAM-GNAMPA project ``{\em Analisi qualitativa di problemi differenziali non lineari}'' and PRIN Project 20227HX33Z ``{\em Pattern formation in nonlinear phenomena}'' and is a member of the INDAM Research Group ``Gruppo Nazionale per l'Analisi Matematica, la Probabilità e le loro Applicazioni''. S.-H Moon was supported by the National Research Foundation of Korea (NRF) grant funded by the Korea government(MSIT) (No. 2022R1C1C200982, No. 2022R1A2C4002585) and a New Faculty Research Grant of Pusan National University, 2025. }

\begin{abstract}
The aim of this article is twofold: one one side we introduce and study the properties of a critical sinh-Gordon type flow
\begin{equation*}
{\frac{\partial}{\partial t}}e^u=\Delta_gu+8\pi\left({\frac{h_1e^u}{\int_{\Sigma}h_1e^udV_g}}-1\right)-\rho_2\left({\frac{h_2e^{-u}}{\int_{\Sigma}h_2e^{-u}dV_g}}-1\right),
\end{equation*}
where $\rho_2<8\pi$, $h_1,h_2$ are non-negative weight functions and $\Sigma$ is a closed Riemannian surface. Secondly, under suitable geometric conditions, we prove the convergence of the flow to a solution of the critical sinh-Gordon equation, extending the result of Zhou (2008) to the case of non-negative weights. The argument is based on a careful blow-up analysis. Some remarks about a Toda flow are also given.
\end{abstract}

\keywords{sinh-Gordon flow, global existence, critical case, non-negative weights}
\thanks{2020 \textit{Mathematics Subject classification:} 35J20, 35J61, 53C44.}

\maketitle


\section{Introduction}
Let $(\Sigma,g)$ be a closed Riemann surface with metric $g$ and let $h_1, h_2$ be smooth non-negative functions.
For simplicity, we will assume that the area $|\Sigma|_g$ of the surface equals 1 throughout the paper.
We are concerned with the following sinh-Gordon equation
\begin{equation}\label{maineq-s}
-\Delta_gu=\rho_1\left(\frac{h_1e^u}{\int_{\Sigma}h_1e^udV_g}-1\right)-\rho_2\left(\frac{h_2e^{-u}}{\int_{\Sigma}h_2e^{-u}dV_g}-1\right) \quad \mbox{on } \Sigma,
\end{equation}
where $\rho_1$ and $\rho_2$ are non-negative constants.

Derived from Onsager's vortex model\,\cite{Ons}, equation\,\eqref{maineq-s} appears in\,\cite{JM,PL} as a model in the description of the mean field of the equilibrium turbulence with arbitrarily signed vortices  from different statistical arguments, and for more physical background concerning 2D-turbulence, see\,\cite{Chorin, Lions, Newton} and the references therein. In addition to turbulent Euler flows, it also arises as a mean field equation in the description of self-dual condensates of some Chern-Simon-Higgs model, see \cite{Caffarelli, DJLW-C, DJLW-M, MSGT, Tarantello}. As for conformal geometry, if $\rho_1=8\pi$ and $\rho_2=0,$ \eqref{maineq-s} is related to the well-known Kazdan-Warner problem of prescribing the Gaussian curvature, see \cite{KW, CGY, Cheng-Lin} and the references therein.
\par

The sinh-Gordon equation \eqref{maineq-s} has a variational structure, and its solutions correspond to the critical points of the functional $I_{\rho_1,\rho_2}: H^1(\Sigma) \to \R$
\[
I_{\rho_1,\rho_2}(u)={\frac{1}{2}}\int_{\Sigma}|\nabla_g u|^2dV_g-\rho_1\log\int_{\Sigma}h_1e^{u}dV_g-\rho_2\log\int_{\Sigma}h_2e^{-u}dV_g +\big(\rho_1-\rho_2\big)\int_{\Sigma}u dV_g.    \]
One fundamental tool to deal with this kind of functionals is the Moser-Trudinger type inequality
\begin{equation}\label{MTineq}
\log\int_{\Sigma}e^{u-\overline{u}}dV_g+\log\int_{\Sigma}e^{-u+\overline{u}}dV_g\leq{\frac{1}{16\pi}}\int_{\Sigma}|\nabla_gu|^2dV_g+C, \quad \forall u \in H^1(\Sigma),
\end{equation}
where $\overline{u}$ represents the average of $u$.
In the subcritical case, i.e., when $\rho_1,\rho_2 \in(0,8\pi)$, by the Moser-Trudinger inequality \eqref{MTineq}, $I_{\rho_1,\rho_2}$ is bounded from below and coercive. Thus, the global minima of $I_{\rho_1,\rho_2}$ can be attained by the direct minimization.
However, in the critical case, i.e., when $\rho_i \le 8\pi$, $i=1,2$ and $\max(\rho_1,\rho_2)=8\pi$, the functional is bounded from below but not coercive.
This leads to a loss of compactness and makes the existence problem quite subtle and existence of solutions typically depends on the geometry of the underlying surface. This is why the literature about this case is very limited, as we will comment later on. The goal of this paper is to introduce a new tool to address this problem and to extend some previous results.

\medskip

For the supercritical case, i.e., when $\max(\rho_1,\rho_2)>8\pi$,  $I_{\rho_1,\rho_2}$ is unbounded from below and direct minimization can not be applied to the problem. This was considered by many authors, especially for $\rho_2=0$, which reduces to the well-known mean-field equation
\begin{equation}\label{Liou-e}
-\Delta_gu=\rho\left({\frac{he^u}{\int_{\Sigma}he^udV_g}}-1\right).
\end{equation}
Indeed, many techniques have been developed like degree counting and min-max schemes.
For example, we refer to the papers \cite{Chen-Lin-T,DJLW-E,Djadli,Lin,MR,Malchiodi} and the references therein.
On the other hand, there are few results about the sinh-Gordon equation \eqref{maineq-s} in the supercritical case.
We refer to the papers \cite{BJMR, AJ, JWYZ, Zhou-S} and references therein.
We also remark that the problem has some analogies with the Toda system, see \cite{BJMR}.

From now on, we will focus on the sinh-Gordon equation and the mean field equation in the critical case.
For the mean field equation \eqref{Liou-e} with $\rho=8\pi$ and a positive function $h \in C^\infty(\Sigma)$, Ding, Jost, Li and Wang (see \cite{DJLW}) proved the first existence result under a geometric condition.
They considered the minimizer $u_\epsilon$ of the slightly subcritical case
\[
I_{8\pi-\epsilon}(u)=\frac12\int_{M}|\nabla_gu|^2dV_g+(8\pi-\epsilon)\int_{\Sigma}u dV_g-(8\pi-\epsilon)\log\int_{\Sigma}he^udV_g.
\]
When $u_{\epsilon}$ blows up and does not converges in $H^1(\Sigma)$, by blow-up analysis, they inferred the following lower bound related to the geometry of $\Sigma$
\begin{equation}\label{lowerbd1}
\inf\limits_{H^1(M)}I_{8\pi}(u) \geq -8\pi-8\pi\log\pi-4\pi\max_{x_0\in M}\big(A(x_0)+2\log h(x_0)\big).
\end{equation}
On the other hand, they also constructed a test function $\phi_{\epsilon}$ such that, for small $\epsilon>0$, $I_{8\pi}(\phi_{\epsilon})$ is strictly less than the right hand side of \eqref{lowerbd1}, which contradicts the blow-up property. Consequently, $u_{\epsilon}$ converges in $H^1(\Sigma)$ to the solution $\tilde{u}$ of \eqref{Liou-e} with $\rho=8\pi$.
Later, Yang and Zhu (see \cite{YangZhu}) generalized the above result for a non-negative function $h$ by excluding the possibility of the blow-up at zeros of $h(x)$ based on compactness-concentration lemma in \cite{DJLW-C}.
Recently, in \cite{SunZhu-E,Zhu}, the authors proved that such existence results still hold even when $h$ is sign-changing.

There are also existence results for the sinh-Gordon equation in the critical case.
In \cite{Zhou}, Zhou obtain the existence result of \eqref{maineq-s} with $h_1,h_2 \equiv 1$, $\rho_1=8\pi,\rho_2\in(0,8\pi]$ under some geometric conditions generalizing \cite{DJLW}.
The argument is in the spirit of Toda systems (see \cite{JLW}) and exploits the compact-concentration theorem established by Ohtsuka-Suzuki \cite{OS}.  However, when $h_1,h_2$ are non-negative functions, we can not directly follow this approach.
Thus, the main goal of this paper is to provide an alternative proof of the previous results and to extend them to non-negative functions $h_1, h_2$.

We will base our analysis on the flow method, which was already exploited for the mean field equation \eqref{Liou-e}.
In \cite{Cas1,Cas2}, the author introduced the following mean field type flow
\begin{equation}\label{Cas-f}
\frac{\partial}{\partial t}e^v=\Delta v-Q+\rho{\frac{e^v}{\int_\Sigma e^vdV_g}}, \quad \quad
v(\cdot,0)=v_0(x) \in C^{2+\alpha}(\Sigma),
\end{equation}
$\alpha\in(0,1)$,where $Q\in C^{\infty}(\Sigma)$ is a given function such that  $\int_\Sigma QdV_g=\rho$.
We note that the time-independent solution satisfies a mean-field type equation, which is equivalent to \eqref{Liou-e}.
In \cite{Cas2}, Cast\'eras proved the global existence of the solution.
Moreover, using the compactness theorem in \cite{Cas1}, he proved the convergence of the flow $v(t)$ to a solution of the mean field equation associated to \eqref{Cas-f} provided that $\rho\neq 8N\pi$ for $N\in\mathbb{N}^*$.
However, such compactness theorem fails in the critical case $\rho=8\pi$.
In \cite{LiZhu}, the authors used the idea of \cite{DJLW} to overcome this difficulty,
proving a lower bound of the corresponding functional $I_{8\pi}$ when the flow is not bounded. They constructed a test function $\phi_\epsilon$ such that $I_{8\pi}(\phi_\epsilon)$ is smaller than the lower bound under the geometric condition in \cite{DJLW}.
This means the flow converges when we choose an appropriate initial data.
Subsequently, in \cite{SunZhu}, Sun and Zhu generalized this approach for non-negative functions $h$.
Finally, in \cite{LiXu}, Li and Xu generalized the above result to the case of the sign-changing function $h$.
Yang and Wang (see \cite{WY}) also considered the mean field type flow with a sign-changing function $h$ when a finite isometric group acts on the surface and $h$ is invariant under the group action.
For other variant of mean field type flows, see \cite{Lin-Yang,Yu-Yang,Zhang-Yang}. For $Q$-curvature flows we refer to the recent result \cite{q-curv} and the references therein. Recently, there is also a result \cite{ParkZhang} about a parabolic system related to Toda systems.

\medskip

Motivated by \cite{LiXu,LiZhu,SunZhu,WY}, we introduce the following evolution problem \eqref{maineq} to deal with the critical  sinh-Gordon equation \eqref{maineq-s}
\begin{equation}\label{maineq}
\left\{\begin{aligned}
&{\frac{\partial}{\partial t}}e^u=\Delta_gu+8\pi\left({\frac{h_1e^u}{\int_{\Sigma}h_1e^udV_g}}-1\right)-\rho_2\left({\frac{h_2e^{-u}}{\int_{\Sigma}h_2e^{-u}dV_g}}-1\right),\\
&u(\cdot,0)=u_0 \in C^{2+\alpha}(\Sigma),
\end{aligned}\right.
\end{equation}
$\alpha\in(0,1)$. We note that it is a gradient flow with respect to the functional $I_{8\pi,\rho_2}$. For simplicity, we will denote it $J_{\rho_2}$
\begin{equation}\label{main-EF}
J_{\rho_2}(u)={\frac{1}{2}}\int_{\Sigma}|\nabla_g u|^2dV_g-8\pi\log\int_{\Sigma}h_1e^{u}dV_g-\rho_2\log\int_{\Sigma}h_2e^{-u}dV_g +\big(8\pi-\rho_2\big)\int_{\Sigma}u dV_g.
\end{equation}


To guarantee some global properites of the flow, we always assume that $h_1 h_2 \not\equiv 0$, see the discussion later on. We investigate the properties of the latter flow and, in particular, prove the following result.

\begin{thm}\label{global}
Fix $\alpha \in (0,1)$. For any initial data $u_0 \in C^{2+\alpha}(\Sigma)$,  there exists a unique global solution $u\in C^{2+\alpha,1+\alpha/2}(\Sigma \times [0,+\infty))$ to \eqref{maineq}.
\end{thm}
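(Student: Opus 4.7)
The plan is to combine local well-posedness from quasilinear parabolic theory with the variational structure of the flow (mass conservation, energy dissipation) to preclude finite-time blow-up and extend the solution globally.

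\emph{Local existence.} Dividing \eqref{maineq} by $e^u$ puts the flow in scalar quasilinear form
\[
\partial_t u - e^{-u}\Delta_g u = e^{-u}\Phi(u,x,t), \quad \Phi := 8\pi\!\left(\tfrac{h_1 e^u}{\int_\Sigma h_1 e^u\,dV_g}-1\right) - \rho_2\!\left(\tfrac{h_2 e^{-u}}{\int_\Sigma h_2 e^{-u}\,dV_g}-1\right),
\]
with $u_0\in C^{2+\alpha}(\Sigma)$ bounded. Since $h_1 h_2\not\equiv 0$, the denominators $\int_\Sigma h_i e^{\pm u_0}\,dV_g$ are strictly positive, $e^{-u}$ is uniformly positive and H\"older continuous in $u$ in a $C^0$-neighborhood of $u_0$, and $\Phi$ depends smoothly (though nonlocally) on $u$. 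Standard quasilinear parabolic theory on closed manifolds (Amann's maximal regularity, or the Ladyzhenskaya-Solonnikov-Ural'ceva Schauder framework) then produces a unique maximal classical solution $u\in C^{2+\alpha,1+\alpha/2}(\Sigma\times[0,T^\ast))$ for some $T^\ast\in(0,+\infty]$, with uniqueness persisting by concatenation.

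\emph{Conservation, dissipation, and a priori bounds.} Integrating \eqref{maineq} over $\Sigma$ yields mass conservation $\int_\Sigma e^{u(\cdot,t)}\,dV_g \equiv M_0 := \int_\Sigma e^{u_0}\,dV_g$; in particular $\max_\Sigma u(\cdot,t)\geq \log M_0$, so $e^{-\max u}\leq M_0^{-1}$, and by Jensen $\int_\Sigma e^{pu}\,dV_g\leq M_0^p$ for every $p\in(0,1]$. Testing \eqref{maineq} against $\partial_t u$ gives the dissipation identity
\[
\tfrac{d}{dt}J_{\rho_2}(u(t)) = -\int_\Sigma e^u\,|\partial_t u|^2\,dV_g\leq 0,
\]
so $J_{\rho_2}(u(t))\leq J_{\rho_2}(u_0)$. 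Since $\rho_2<8\pi$, a manipulation of \eqref{MTineq} combined with the mass bound produces the uniform lower bound $J_{\rho_2}(u)\geq (8\pi-\rho_2)\log\int_\Sigma e^{-(u-\bar u)}dV_g - C$; hence $\int_0^{T^\ast}\int_\Sigma e^u|\partial_t u|^2\,dV_g\,dt<\infty$ and $\log\int_\Sigma e^{-(u-\bar u)}\,dV_g$ is uniformly bounded on $[0,T^\ast)$. An $H^1$-compactness argument based on the hypothesis $h_1 h_2\not\equiv 0$ then yields uniform positivity of the denominators $\int_\Sigma h_i e^{\pm u}\,dV_g$ along the flow.

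\emph{Global extension and the main obstacle.} Assume for contradiction $T^\ast<\infty$. By the quasilinear continuation criterion, it suffices to establish a uniform $L^\infty$-bound on $u$ on $[0,T^\ast)$. The upper bound follows from a parabolic maximum-principle analysis at a point where $u(\cdot,t)$ attains its maximum: using $\Delta u\leq 0$ there, the estimate $e^{-\max u}\leq M_0^{-1}$, and the positive lower bound on $\int_\Sigma h_1 e^u\,dV_g$, one obtains $\tfrac{d}{dt}\max u\leq C$, hence $\max u\leq \max u_0 + CT$ on any $[0,T]\subset[0,T^\ast)$. The lower bound on $u$ is the crux and the main technical difficulty: since $\rho_1=8\pi$ is critical, the energy bound does \emph{not} coerce an $H^1$-bound on $u$, and $H^1(\Sigma)$ is not continuously embedded in $L^\infty(\Sigma)$ in dimension two, so one cannot rely on Sobolev embedding alone. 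The remedy is a parabolic Moser iteration (or a De Giorgi-type scheme) applied to the equation in the first step, exploiting the Moser-Trudinger integrability of $e^{\pm u}$ together with the time-integrated dissipation bound. With $u\in L^\infty$ secured, the equation is uniformly parabolic with H\"older data, linear parabolic Schauder theory yields uniform $C^{2+\alpha,1+\alpha/2}$ control, and this contradicts the maximality of $T^\ast$. The overall template follows the one carried out for the mean-field flow in \cite{Cas2,LiXu,LiZhu,SunZhu,WY}; the new wrinkle is the simultaneous control of two exponential terms with possibly vanishing non-negative weights, where the $h_2 e^{-u}$ contribution is the delicate one.
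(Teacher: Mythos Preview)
Your local existence, mass conservation, energy dissipation, and maximum-principle upper bound $\max u\leq \max u_0+CT$ are all sound; the positivity of the denominators is obtained more directly by Cauchy--Schwarz, $\int_\Sigma h_2 e^{-u}\,dV_g\geq \big(\int_\Sigma \sqrt{h_1 h_2}\,dV_g\big)^2\big/\int_\Sigma h_1 e^u\,dV_g$, using $h_1 h_2\not\equiv 0$. The genuine gap is the lower bound on $u$. You correctly flag it as the crux, but ``parabolic Moser iteration exploiting Moser--Trudinger integrability of $e^{\pm u}$'' does not close: Moser--Trudinger controls $\int e^{p(u-\bar u)}$ only through $\|\nabla u\|_{L^2}^2$, which you have not bounded (you yourself note that $\rho_1=8\pi$ kills coercivity), and in the quasilinear form the diffusion coefficient $e^{-u}$ is unbounded from above precisely where $u\to-\infty$, so the equation is not uniformly parabolic and standard De Giorgi--Nash--Moser machinery does not apply. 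Your correct bound $\int e^{-(u-\bar u)}\leq C$ only says $\int e^{-u}\leq Ce^{-\bar u}$, with $\bar u$ still uncontrolled from below. Incidentally, the references you cite for the ``template'' do not use an $L^\infty$-first strategy either; they follow the energy route below.

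The paper's argument is a pure energy method that reaches pointwise control only at the very end. The idea you are missing is that $\rho_2<8\pi$ still yields \emph{partial} coercivity: subtracting a multiple of \eqref{MTineq} from $J_{\rho_2}$ gives $\|\nabla u(t)\|_{L^2}^2\leq C-16\pi\,\bar u(t)$. The residual control of $\bar u(t)$ is recovered from the differential inequality $\frac{d}{dt}\int_\Sigma e^{2u}\leq C\int_\Sigma e^{2u}+C$ (hence $\int_\Sigma e^{2u}\leq C(T)$) together with a set-measure argument on $\{e^u\geq\tfrac12 M_0\}$, which produces $\|u\|_{L^2}\leq C\|\nabla u\|_{L^2}+C(T)$ and closes the $H^1$ estimate. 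One then differentiates $\int_\Sigma(1+|\Delta_g u|^2)$ in $t$, using the auxiliary variable $\nu:=e^{u/2}\partial_t u$ and Gagliardo--Nirenberg, to obtain a Gr\"onwall inequality for $\|u\|_{H^2}$; only now does Sobolev embedding $H^2\hookrightarrow C^{0,\alpha}$ in dimension two give a pointwise bound, after which a direct time-H\"older argument and parabolic Schauder yield the uniform $C^{2+\alpha,1+\alpha/2}$ control that contradicts $T^\ast<\infty$.
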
 
\par

In the second part of the paper we exploit the latter result to establish existence of solutions for the critical sinh-Gordon equation with non-negative weight functions, generalizing the results of Ding-Jost-Li-Wang \cite{DJLW} and Zhou \cite{Zhou} for the mean field and sinh-Gordon equations, respectively. Before stating the theorem, we introduce some notations.

\medskip

Let $K$ denote the Gaussian curvature of $\Sigma$. For each $p \in \Sigma$, let $G_p$ be the Green function satisfying
\begin{equation}\label{green}
-\Delta_g G_p=8\pi\delta_p -8\pi \quad \text{ on }  \Sigma, \quad \int_\Sigma G_p\, dV_g=0,
\end{equation}
and $A(p)$ be the regular part of the Green function. More precisely, 
$G_p(x)$ has the following expansion in normal coordinates near $p$:
\begin{equation}\label{Ap}
G_p(x)=-4\log \dist_g (x,p)+A(p)+O(r^2), \quad r=\dist_g(x,p).
\end{equation}

For $p\in\Sigma$, let $\Gamma_p$ be the set of solutions $w_p \in H^1(\Sigma)$ to the singular mean field equation
\begin{equation}\label{smfeq}
-\Delta_g w_p=\rho_2 \left(\frac{h_2 e^{-G_p}e^{w_p}}{\int_\Sigma h_2 e^{-G_p}e^{w_p}dV_g}-1\right) \quad \text{ on }  \Sigma, \quad \int_\Sigma w_p\, dV_g=0.
\end{equation}
We also introduce the functional
\begin{equation}\label{tildeJ}
\tilde{J}_{p}(u)={\frac{1}{2}}\int_{\Sigma}|\nabla_gu|^2dV_g-\rho_2\log\int_{\Sigma}h_2e^{-G_{p}}e^udV_g,\quad\ \forall  u\in H^1(\Sigma) \  \text{ with } \int_\Sigma u =0.
\end{equation}
whose critical points solve \eqref{smfeq}.

Now we are prepared to state the second main theorem.
\begin{thm}\label{main-thm}
Let $\rho_2 \in (0,8\pi)$ and $h_1, h_2$ be smooth non-negative functions.
Suppose 
$$
8\pi-\rho_2-2K(p_0)+\Delta_g\log h_1(p_0)>0
$$ 
for any minimizer $p_0 \in \Sigma$ of $p \mapsto \inf\limits_{p \in \Sigma}\inf\limits_{w \in \Gamma_{p}} (\tilde{J}_{p}(w)-4\pi A(p) -8\pi\log h_1(p))$.
Then, there exists an initial datum $u_0 \in C^{2+\alpha}(\Sigma)$ such that the flow $u(x,t)$ converges in $C^2(\Sigma)$ to a solution $u_\infty$ of the critical sinh-Gordon equation \eqref{maineq-s} with $\rho_1=8\pi$, $\rho_2 \in (0,8\pi)$.
\end{thm}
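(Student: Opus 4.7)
The plan is to follow the flow-based variant of the Ding--Jost--Li--Wang strategy, adapted to the sinh-Gordon setting of \cite{Zhou} and to non-negative weights as in \cite{YangZhu,SunZhu}: use the global flow from Theorem \ref{global} starting from a carefully designed initial datum $\phi_\e$, exploit the fact that $J_{\rho_2}$ decreases along the flow, and construct $\phi_\e$ so that $J_{\rho_2}(\phi_\e)$ sits strictly below the universal lower bound of $J_{\rho_2}$ under blow-up, thereby forcing convergence.

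\emph{Step 1 (Monotonicity and dichotomy).} Testing \eqref{maineq} against $u_t$ and integrating by parts yields the dissipation identity $\frac{d}{dt}J_{\rho_2}(u(t)) = -\int_\Sigma e^u u_t^2\, dV_g \le 0$. Combined with the uniform lower bound on $J_{\rho_2}$ provided by \eqref{MTineq} (using $\rho_2 < 8\pi$), $J_{\rho_2}(u(t))$ converges as $t\to\infty$ and $\int_0^\infty\!\int_\Sigma e^u u_t^2\, dV_g\, dt < \infty$. Hence there is a sequence $t_n\to\infty$ with $\int_\Sigma e^u u_t^2(t_n)\, dV_g \to 0$, along which either $u(\cdot,t_n)$ is bounded in $H^1(\Sigma)$ and a standard compactness argument extracts a subsequential $C^2$-limit solving \eqref{maineq-s}, or $\|u(\cdot,t_n)\|_{H^1}\to\infty$. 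Steps 2--3 rule out the second scenario for a good choice of $u_0$.

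\emph{Step 2 (Concentration analysis and lower bound under blow-up).} Conservation of $\int_\Sigma e^u\, dV_g$ along the flow, the Moser--Trudinger inequality, and $\rho_2 < 8\pi$ force only the $e^u$-part of $u$ to concentrate. A Brezis--Merle / Ohtsuka--Suzuki-type analysis, together with the compactness--concentration techniques of \cite{DJLW-C,YangZhu,SunZhu} that exclude concentration at zeros of $h_1$, shows that, along a subsequence, $\frac{h_1 e^{u(t_n)}}{\int h_1 e^{u(t_n)}\, dV_g} \rightharpoonup \delta_{p_0}$ weakly as measures with $h_1(p_0) > 0$, while $u(\cdot,t_n)$ away from $p_0$ converges to $G_{p_0} - w_\infty$ for some $w_\infty \in \Gamma_{p_0}$ in the sense of \eqref{smfeq}. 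Splitting $J_{\rho_2}(u(t_n))$ into the universal Liouville-bubble contribution ($-8\pi-8\pi\log\pi$), the Green-function regular part ($-4\pi A(p_0)$), the weight contribution ($-8\pi\log h_1(p_0)$) and the non-concentrating profile's contribution ($\tilde J_{p_0}(w_\infty)$) gives
\[
\liminf_{n\to\infty} J_{\rho_2}(u(t_n)) \ge -8\pi -8\pi\log\pi + \mathcal L,
\]
where $\mathcal L := \inf_{p\in\Sigma}\inf_{w\in\Gamma_p}\bigl(\tilde J_p(w) - 4\pi A(p) - 8\pi\log h_1(p)\bigr)$.

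\emph{Step 3 (Test function and conclusion).} Fix a minimizer $p_0$ of $\mathcal L$ and a (near-)minimizer $w_0 \in \Gamma_{p_0}$. Define $\phi_\e$ as a standard DJLW/Zhou-type test function that interpolates between a Liouville bubble $-2\log(\e^2 + r^2)+\text{const}$ of scale $\e$ near $p_0$ and $G_{p_0}(x) - w_0(x)+\text{const}$ far from $p_0$, normalized so that $\int_\Sigma \phi_\e\, dV_g = 0$. Expanding each term of $J_{\rho_2}(\phi_\e)$ with the help of $\sqrt{\det g} = 1 - \tfrac{1}{6}K(p_0)r^2 + O(r^4)$, the expansion \eqref{Ap} of $G_{p_0}$, and the Taylor expansion of $\log h_1$ at $p_0$, the coefficient of the divergent $|\log \e|$-type correction works out to be precisely $-(8\pi -\rho_2 - 2K(p_0) + \Delta_g\log h_1(p_0))$, which is strictly negative by hypothesis. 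Hence $J_{\rho_2}(\phi_\e) < -8\pi - 8\pi\log\pi + \mathcal L$ for $\e$ small, and taking $u_0 := \phi_\e$ together with the monotonicity of Step 1 gives $J_{\rho_2}(u(t)) < -8\pi - 8\pi\log\pi + \mathcal L$ for every $t \ge 0$, contradicting Step 2. The flow therefore stays bounded in $H^1(\Sigma)$ and, by parabolic regularity, in $C^2(\Sigma)$; combining the dissipation estimate with a \L ojasiewicz--Simon inequality for the real-analytic $J_{\rho_2}$ (as in \cite{LiZhu,SunZhu}) upgrades subsequential $C^2$-compactness to full $C^2$-convergence toward a critical point of $J_{\rho_2}$, i.e.\ a solution of \eqref{maineq-s}. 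The main technical obstacle is Step 2: carrying out the parabolic blow-up analysis when $h_1$ is merely non-negative, simultaneously keeping the $e^{-u}$-term under control via the subcritical $\rho_2$-Moser--Trudinger bound, and identifying the limiting regular profile with an element of $\Gamma_{p_0}$ in the exact form \eqref{smfeq} required for the energy matching.
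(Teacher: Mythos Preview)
Your outline is correct and follows essentially the same route as the paper: global flow plus monotonicity, blow-up analysis yielding the barrier level $-8\pi-8\pi\log\pi+\mathcal L$, a DJLW-type test function $\tilde\Phi_\e=\Phi_\e-w_{p_0}$ whose $\e(-\log\e)$-coefficient is $-2\pi(8\pi-\rho_2-2K(p_0)+\Delta_g\log h_1(p_0))<0$, and \L ojasiewicz--Simon for full convergence.

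Two points where your sketch underestimates the difficulty relative to what the paper actually does. First, the assertion that ``Moser--Trudinger and $\rho_2<8\pi$ force only the $e^u$-part to concentrate'' is not how the $e^{-u}$-term is controlled: MT bounds the \emph{mass} $\int h_2 e^{-u}/\int h_2 e^{-u}$ but does not by itself exclude that $-u$ blows up at the same point as $u$. The paper rules this out via a two-stage selection process (Propositions~\ref{Selection}--\ref{Never-S}): first $r_2^n/r_1^n\to\infty$ and pointwise decay $u_i^n(x)+2\log\dist_g(x,x_1^n)\le C$; then, assuming $u_2^n$ blows up, a rescaling at scale $|x_1^n-x_2^n|$ produces a limit solving $-\Delta w_2=A^2\rho_2 h_2(x_0)e^{w_2}-8\pi\delta_{\tilde x_0}$ on $\mathbb R^2$, and the Prajapat--Tarantello classification gives mass $24\pi$, contradicting $\rho_2<8\pi$. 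The Ohtsuka--Suzuki concentration--compactness you invoke is in fact what the paper explicitly avoids for non-negative $h_i$. Second, you hedge with ``(near-)minimizer $w_0\in\Gamma_{p_0}$'', but the test-function expansion uses $\Delta_g w_{p_0}(p_0)=\rho_2$ pointwise, so an actual minimizer is needed; the paper proves the infimum is attained (Proposition~\ref{Compt}) by a separate blow-up argument for the singular equation \eqref{smfeq}, again using $\rho_2<8\pi$ and the singular classification.
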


In fact, when $\rho_2=0$, $\Gamma_p=\{0\}$ and the condition is consistent with the one for the mean field equation. However, when $\rho_2>0$, due to the effect of $e^{-u}$ term in \eqref{maineq-s}, $\tilde{J}_{p}(w)$ appears in the lower bound (see Proposition \ref{lowerbound}).

\medskip

We sketch now the proofs highlighting the differences with the previous results.
We first prove that the solution $u(x,t)$ of \eqref{maineq} exists globally in time.
For this, we derive several a priori estimates (Proposition \ref{H1} -- Proposition \ref{C2alpha}).
However, this is different from previous works \cite{Cas2, SunZhu, WY} since we have to control the $e^{-u}$ term and $\int_\Sigma e^{-u}$ is not conserved. Even though we can perform the below explained blow up analysis for $\rho_2=8\pi$ and sign-changing weight functions, we use in this step $\rho_2<0$ and $h_1,h_2$ non-negative. We postpone to a future work the discussion of this point and possible extensions of this method.

Next, we prove that the time-slices $u(t_n)$ can not blow up at the zero set of $h_1$ (Proposition \ref{Selection}).
We also show $-u(t_n)$ does not blow up (Proposition \ref{Never-S}).
The blow-up analysis is delicate especially when  $u(t_n)$ and $-u(t_n)$ blow up at the same point.
We adapted the idea of the selection process in \cite{JWY,LWZ} and used the hypothesis $\rho_2 <8\pi$ subtly.
We remark that we can not apply the result in \cite{JWY} directly due to the time derivative term and the non-negativeness of $h_1, h_2$.

Based on this blow-up analysis, we prove the lower bound of the functional $\lim_{t \to\infty} J_{\rho_2}(u(t))$ when the time-slices $u(t_n)$ blow up (Proposition \ref{lowerbound}).
As we mentioned after the main theorem, $\tilde{J}_{p}(w)$ appears in the lower bound, since $-u(t_n)$ converges to a solution of \eqref{smfeq}.
Then we construct a test function $\tilde{\Phi}_\epsilon$ such that, under suitable geometric conditions, $J_{\rho_2}(\tilde{\Phi}_\epsilon)$ is smaller than the lower bound (Proposition \ref{TFthm}).
For this purpose, we have to choose a solution $w$ of \eqref{smfeq} achieving the infimum of the leading term, so we prove the compactness of the solution set of \eqref{smfeq} (Proposition \ref{Compt}).
Finally, we show the convergence of the flow using a priori estimates and {\L}ojasiewicz-Simon gradient inequality \cite{PM}. 

We conclude the introduction with the following remark about the flow method for Toda systems.

\begin{remark}
By the same method we can address following critical $SU(3)$ Toda system:
\begin{equation}\label{Toda}
\left\{\begin{aligned}
-\Delta_gu_1 =2\rho_1\left({\frac{h_1e^{u_1}}{\int_{\Sigma}h_1e^{u_1}dV_g}}-1\right)-\rho_2\left({\frac{h_2e^{u_2}}{\int_{\Sigma}h_2e^{u_2}dV_g}} -1\right),\\
-\Delta_gu_2 =2\rho_2\left({\frac{h_2e^{u_2}}{\int_{\Sigma}h_2e^{u_2}dV_g}}-1\right)-\rho_1\left({\frac{h_1e^{u_1}}{\int_{\Sigma}h_1e^{u_1}dV_g}}- 1\right),
\end{aligned}\right.
\end{equation}
where $\rho_1=4\pi,\rho_2\in(0,4\pi)$, $h_1,h_2 \in C^\infty(\Sigma)$, $h_1, h_2 \ge 0$. Indeed, following the same strategy as before, we are able to carry out the blow up analysis and the construction of suitable test functions. However, we face a new difficulty in the global existence of the associated flow, which we describe hereafter.

We note that \eqref{Toda} is the Euler-Lagrangian equation for the following functional
\begin{equation*}
\tilde{I}_{\rho_1,\rho_2}(u_1(x,t),u_2(x,t))=\int_{\Sigma}Q(u_1,u_2)dV_g-\rho_1 \log\int_{\Sigma}e^{u_1-\overline{u}_1}dV_g-\rho_2 \log\int_{\Sigma}e^{u_2-\overline{u}_2}dV_g,
\end{equation*}
where $Q(u_1,u_2)={\frac{1}{3}}\big(|\nabla_gu_1|^2+|\nabla_gu_2|^2+\nabla_gu_1\nabla_gu_2\big)$.

One interesting point is that a possible gradient flow of $\tilde{I}_{\rho_1,\rho_2}$ is the following one, quite different from standard (semilinear) parabolic equations,
\begin{equation}\label{E-Toda}
\left\{\begin{aligned}
\frac{\partial u_1}{\partial t}=\frac23 e^{-u_1}\Delta_gu_1 +\frac13 e^{-u_1}\Delta_gu_2+4\pi\left(\frac{h_1}{\int_{\Sigma}h_1e^{u_1}dV_g}-e^{-u_1}\right),\\
\frac{\partial u_2}{\partial t}=\frac13 e^{-u_2}\Delta_gu_1+\frac23 e^{-u_2}\Delta_gu_2+\rho_2\left(\frac{h_2}{\int_{\Sigma}h_2e^{u_2}dV_g}-e^{-u_2}\right).
\end{aligned}\right.
\end{equation}
To study the global existence we exploit the following idea. Observe that the eigenvalues of the matrix
\begin{equation*}
\begin{pmatrix}
\frac23 e^{-w_1(x,t)} & \frac13 e^{-w_2(x,t)}\\
\frac13 e^{-w_1(x,t)} & \frac23 e^{-w_2(x,t)}
\end{pmatrix},
\quad w_i(x,t) \in C^{2+\alpha,1+\alpha/2}(\Sigma \times [0,\infty)) : \text{ fixed functions }
\end{equation*}
are positive and distinct. Thus, one can use the eigenvalues $\lambda_i(x,t)$ and their eigenvectors to transform the linearized operator of \eqref{E-Toda} at $(w_1, w_2)$ into the standard form alike
\[\frac{\partial \phi_i}{\partial t}-\lambda_i(x,t) \Delta \phi_i + L_i(x,t,\phi_1,\phi_2,\nabla \phi_1,\nabla \phi_2), \quad \lambda_i(x,t)\ge c>0 \text{ for } x\in \Sigma, t\in [0,T], \ \ i=1,2.\]
From this observation, we can apply the standard parabolic theory to prove the short time existence of \eqref{E-Toda}.
(We refer to \cite[Chap. 9]{Friedman} for the linear parabolic systems and \cite[Chap. 7]{Friedman}, \cite{Huisken} for the quasilinear parabolic equations).

However, there is an obstacle when proving a priori estimates for the original quasilinear system.
In order to apply the well-known estimates for the standard parabolic equations (e.g. Schauder estimate), we need to transform the system again.
Observe that the coefficient matrix involves $e^{-u_1}$, $e^{-u_2}$, and the coefficients of the transformed system are related to $e^{-u_i}$ and their derivatives.
Since the constant in the Schauder estimate depends on the norm of the coefficients of the differential operator, we could not apply the standard estimate.
Due to this difficulty, at this point we can not prove the global existence of the flow \eqref{E-Toda}. We postpone this to a future work.

We remark that, in \cite{ParkZhang}, the authors proved the global existence of the solution of another semilinear parabolic system related to elliptic systems including Toda systems and Liouville systems.
\end{remark}

The organization of this paper is as follows. In Section 2, we prove the the global existence of solutions to \eqref{maineq}.
In Section 3, we will carry out blow-up analysis and derive the lower bound of blow-up sequences, which is the key element in proving the existence result of \eqref{maineq-s}.
In Section 4, we construct a test function and we prove the convergence of the flow. This completes the proof of Theorem \ref{main-thm}.

\section{Global existence of the flow}

In this section, we prove the global existence and uniqueness of solutions to the flow \eqref{maineq}.
The argument is divided into two main steps: we first recall the short-time existence and present several preliminary properties of the solution, and then derive a priori estimates that allow us to extend the solution globally in time.

\subsection{Short-time existence and preliminaries}

As the first step, by the standard parabolic theory, we can prove the short-time existence and uniqueness of the solution to \eqref{maineq} (for example, see \cite{Friedman}). We omit the details and refer the reader to the references.

\begin{lemma}\label{short-time}
Fix $\alpha \in (0,1)$. For any initial data $u_0 \in C^{2+\alpha}(\Sigma)$, there exists $\varepsilon>0$ such that \eqref{maineq} has a unique solution $u \in C^{2+\alpha,1+\alpha/2}(\Sigma \times [0,\varepsilon])$.
\end{lemma}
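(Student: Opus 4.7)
The plan is to recast \eqref{maineq} in standard quasilinear parabolic form and apply a Banach fixed-point argument based on linear parabolic Schauder estimates. Since $\partial_t e^u = e^u \partial_t u$, dividing by $e^u$ yields the equivalent problem
\[
\partial_t u = e^{-u}\Delta_g u + e^{-u}\,\mathcal{N}(u), \qquad u(\cdot,0)=u_0,
\]
where $\mathcal{N}(u)$ denotes the full nonlocal right-hand side of \eqref{maineq}. Because $u_0$ is bounded and $h_1 h_2 \not\equiv 0$ implies that neither $h_1$ nor $h_2$ is identically zero, both integrals $\int_\Sigma h_i e^{\pm u_0}\, dV_g$ are strictly positive; moreover $e^{-u_0}$ is bounded above and below by positive constants. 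Thus the linearization is uniformly parabolic and the nonlocal nonlinearity is well defined and smooth in a neighborhood of $u_0$.

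Next I would fix $M > \|u_0\|_{C^{2+\alpha}(\Sigma)}$ and, for $T>0$ to be chosen small, introduce the closed set
\[
X_T = \bigl\{ v \in C^{2+\alpha, 1+\alpha/2}(\Sigma \times [0,T]) \,:\, v(\cdot,0)=u_0,\ \|v\|_{C^{2+\alpha, 1+\alpha/2}} \le M \bigr\}.
\]
For $v \in X_T$ I define $\Phi(v)=u$ as the solution of the linear parabolic problem
\[
\partial_t u - e^{-v}\Delta_g u = e^{-v}\,\mathcal{N}(v), \qquad u(\cdot,0)=u_0.
\]
For $T$ small, continuity of $v$ in $(x,t)$ ensures that the coefficient $e^{-v}$ remains in a strictly positive range and that the nonlocal denominators $\int_\Sigma h_i e^{\pm v}\, dV_g$ stay bounded away from zero, so both the coefficient and the source are in $C^{\alpha,\alpha/2}$ with norms controlled by $M$. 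The standard linear parabolic Schauder theory on a closed manifold (see \cite[Ch.~7]{Friedman}) then yields a unique solution $u \in C^{2+\alpha, 1+\alpha/2}$ with $\|u\|_{C^{2+\alpha, 1+\alpha/2}} \le C(M)$. Shrinking $T$ further, $\Phi$ maps $X_T$ into itself and is a contraction, since the composition $v \mapsto e^{-v}\mathcal{N}(v)$ is Lipschitz in the relevant norm and all increments from the initial data stay small. The Banach fixed-point theorem then produces the desired short-time solution.

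Uniqueness of a $C^{2+\alpha, 1+\alpha/2}$ solution on $[0,\varepsilon]$ follows by considering the difference $w = u_1 - u_2$ of two solutions: $w$ satisfies a linear parabolic equation with bounded coefficients and vanishing initial datum, to which the maximum principle or a Gronwall-type energy argument applies. The main technical point I expect is the uniform control of the nonlocal denominators and of the parabolicity constant along the iteration; both issues are handled by shrinking $T$ so that $v$ remains close to $u_0$ in $C^0$, keeping all relevant quantities in a safe range. Alternatively, one may directly invoke the general quasilinear short-time theory from \cite{Friedman}, as the authors indicate, but the fixed-point scheme above makes the argument self-contained.
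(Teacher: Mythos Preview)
Your proposal is correct and in fact supplies considerably more detail than the paper, which omits the proof entirely and simply refers to ``standard parabolic theory'' with a citation to \cite{Friedman}. Your recasting of \eqref{maineq} as $\partial_t u = e^{-u}\Delta_g u + e^{-u}\mathcal{N}(u)$ and the Banach fixed-point scheme on a short time interval is one of the standard ways to make this precise; your final remark that one may alternatively invoke the general quasilinear theory of \cite{Friedman} is exactly what the authors do.

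One minor technical point worth tightening in your sketch: the Schauder bound $\|u\|_{C^{2+\alpha,1+\alpha/2}} \le C(M)$ need not be $\le M$ directly, so the self-mapping $\Phi(X_T)\subset X_T$ requires a bit more care than ``shrinking $T$''. The usual fix is to write $u=\bar u+w$ where $\bar u$ solves the linear problem with coefficients frozen at $u_0$, and run the contraction on $w$ (which has zero initial data, so its lower-order norms are $o(1)$ as $T\to 0$); alternatively, take $M$ to be a fixed large multiple of the Schauder constant times the data. This is routine and does not affect the validity of your argument.
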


We first note that by Lemma \ref{short-time}, there exists $T>0$ such that \eqref{maineq} has a unique solution $u\in C^{2+\alpha,1+\alpha/2}(\Sigma \times [0,T])$.
In addition, we prove several basic properties of the solution, including conservation of mass and monotonicity of the energy functional. 
These preliminaries are crucial for the energy method employed in the next subsection and will also be used later in the paper.

\begin{lemma}\label{V-conserva} Suppose that \eqref{maineq} admits a solution $u \in C^{2+\alpha,1+\alpha/2}(\Sigma \times [0,T])$ for some $T>0$. 
Then the following properties hold:
{\rm{(i)}} For all $t\in[0,\ T],$ we have
\[\int_{\Sigma}e^{u(t)}dV_g=\int_{\Sigma}e^{u_0}dV_g.\]
{\rm{(ii)}} The energy functional $J_{\rho_2}(u(t))$ is non-increasing in $t$, that is, 
for all $0\leq t_0\leq t_1\leq T,$
\[
J_{\rho_2}(u(t_1))\leq J_{\rho_2}(u(t_0)).
\]
\end{lemma}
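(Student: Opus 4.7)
The plan is to prove both claims by direct time-differentiation of the quantities in question, using the flow equation \eqref{maineq} and the fact that $\Sigma$ is closed. The regularity $u \in C^{2+\alpha,1+\alpha/2}(\Sigma \times [0,T])$ is more than sufficient to justify exchanging differentiation and integration, so no delicate limiting argument is needed.

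For part (i), I would differentiate $t \mapsto \int_\Sigma e^{u(t)}\, dV_g$ and substitute $\partial_t e^u$ using the right-hand side of \eqref{maineq}. Since $\Sigma$ is closed, the divergence theorem gives $\int_\Sigma \Delta_g u\, dV_g = 0$. Moreover, since $|\Sigma|_g = 1$, each of the two bracketed nonlinear terms integrates to zero, e.g.
\[
\int_\Sigma \left(\frac{h_1 e^u}{\int_\Sigma h_1 e^u\, dV_g}-1\right) dV_g = 1 - 1 = 0,
\]
and similarly for the $h_2 e^{-u}$ term. Hence $\tfrac{d}{dt}\int_\Sigma e^{u(t)}\, dV_g \equiv 0$, which gives the conservation of mass.

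For part (ii), I would compute $\tfrac{d}{dt} J_{\rho_2}(u(t))$ term by term from the definition \eqref{main-EF}. Integration by parts on $\int_\Sigma \nabla_g u \cdot \nabla_g u_t\, dV_g$ yields $-\int_\Sigma (\Delta_g u)\, u_t\, dV_g$. Differentiating each logarithm produces
\[
-8\pi \frac{\int_\Sigma h_1 e^u\, u_t\, dV_g}{\int_\Sigma h_1 e^u\, dV_g} + \rho_2 \frac{\int_\Sigma h_2 e^{-u}\, u_t\, dV_g}{\int_\Sigma h_2 e^{-u}\, dV_g},
\]
and the linear term gives $(8\pi - \rho_2)\int_\Sigma u_t\, dV_g$. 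Factoring $u_t$ out of the resulting integrand, one recognizes exactly minus the right-hand side of the flow equation \eqref{maineq}, i.e. $-u_t \cdot \partial_t e^u = -e^u (u_t)^2$. Therefore
\[
\frac{d}{dt} J_{\rho_2}(u(t)) = -\int_\Sigma e^{u(t)} \bigl(\partial_t u\bigr)^2 dV_g \leq 0,
\]
and integration from $t_0$ to $t_1$ gives the claimed monotonicity.

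There is no substantial obstacle; the only conceptual point worth highlighting is that \eqref{maineq} is precisely the gradient flow of $J_{\rho_2}$ with respect to the $u$-dependent weighted inner product $\langle \phi, \psi \rangle_u := \int_\Sigma e^u \phi \psi \, dV_g$, which is exactly why $\partial_t e^u$ (rather than $\partial_t u$) appears on the left-hand side of the flow and why the dissipation naturally takes the form $\int_\Sigma e^u (\partial_t u)^2 \, dV_g$. This identity will also be important later in the paper when extracting subsequences along which $\partial_t u \to 0$.
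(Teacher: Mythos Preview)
Your proof is correct and follows essentially the same approach as the paper: both parts are obtained by direct time-differentiation, using the divergence theorem on the closed surface for (i) and recognizing the dissipation identity $\tfrac{d}{dt}J_{\rho_2}(u(t))=-\int_\Sigma e^u(\partial_t u)^2\,dV_g$ for (ii). Your write-up is in fact more detailed than the paper's, and your remark on the weighted gradient-flow structure is a nice conceptual addition.
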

\begin{proof}
\noindent \textbf{(i)} By integrating both sides of \eqref{maineq} over $\Sigma \times [0,t]$, we have
\begin{equation*}
	0=\int_{\Sigma}\int^t_0{\frac{\partial}{\partial t}}(e^{u(t)})dsdV_g=\int_{\Sigma}e^{u(t)}dV_g-\int_{\Sigma}e^{u(0)}dV_g=\int_{\Sigma}e^{u(t)}dV_g-\int_{\Sigma}e^{u_0}dV_g.
\end{equation*}
		
\noindent \textbf{(ii)}
By differentiating $J_{\rho_2}(u(t))$ with respect to $t$ and integrating by parts, we get
\begin{equation}\label{J'}
	{\frac{\partial}{\partial t}}J_{\rho_2}(u(t))=-\int_{\Sigma}\left|\frac{\partial u}{\partial t}\right|^2 e^u dV_g \leq 0.
\end{equation}
\par

Integrating \eqref{J'} with respect to time from\ $t_0$ to $t_1$, we obtain that 
\begin{equation*}
J_{\rho_2}(u(t_1))-J_{\rho_2}(u(t_0))=-\int^{t_1}_{t_0}\int_{\Sigma} \left|\frac{\partial u}{\partial t}\right|^2e^udV_gdt\leq 0.
\end{equation*}
\end{proof}

\begin{lemma}\label{Bounded-F}
There exist $C,c>0$, independent of $T$, such that for all $t \in [0,T]$,
\[\int_{\Sigma}h_1e^udV_g,\ \frac{\int_{\Sigma}e^{-u}dV_g}{\int_{\Sigma}h_2e^{-u}dV_g} \le C \quad \text{ and } \quad \int_\Sigma h_1 e^u dV_g, \int_{\Sigma}h_2e^{-u}dV_g \ge c.\]
\end{lemma}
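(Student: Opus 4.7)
The plan is to obtain the four bounds from the monotonicity of $J_{\rho_2}$ (Lemma~\ref{V-conserva}(ii)) and the conservation $\int_\Sigma e^u\,dV_g=\int_\Sigma e^{u_0}\,dV_g$ (Lemma~\ref{V-conserva}(i)), combined with the Moser--Trudinger inequality~\eqref{MTineq}. The upper bound $\int_\Sigma h_1 e^u\,dV_g\le \|h_1\|_\infty\int_\Sigma e^{u_0}\,dV_g$ is immediate from mass conservation.

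For the three remaining estimates, I would first extract from \eqref{MTineq} and the conservation of $\int e^u$ two linked inequalities,
\[
\log\int_\Sigma e^{-u}\,dV_g\ \le\ \tfrac{1}{16\pi}\int_\Sigma|\nabla_g u|^2\,dV_g + C,\qquad \bar u\ \ge\ -\log\int_\Sigma e^{-u}\,dV_g\ \ge\ -\tfrac{1}{16\pi}\int_\Sigma|\nabla_g u|^2\,dV_g-C,
\]
the latter via Jensen, together with the trivial consequences $\log\int_\Sigma h_2 e^{-u}\,dV_g\le \tfrac{1}{16\pi}\int_\Sigma|\nabla_g u|^2\,dV_g+C$ and $\log\int_\Sigma h_1 e^u\,dV_g\le C$. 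Inserting these into $J_{\rho_2}(u(t))\le J_{\rho_2}(u_0)=:C_0$ and isolating the two log-terms one at a time is expected to produce two clean cancellations driven by the fact that $\rho_1=8\pi$ is critical in \eqref{MTineq}.

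Isolating $8\pi\log\int_\Sigma h_1 e^u\,dV_g$ and substituting the upper bounds for the other terms, the coefficient of $\int|\nabla_g u|^2\,dV_g$ works out to $\tfrac12-\tfrac{\rho_2}{16\pi}-\tfrac{8\pi-\rho_2}{16\pi}=0$, giving $\log\int_\Sigma h_1 e^u\,dV_g\ge -C$, hence $\int_\Sigma h_1 e^u\,dV_g\ge c>0$. Isolating $\rho_2\log\int_\Sigma h_2 e^{-u}\,dV_g$ in the same way, the coefficient becomes $\tfrac{\rho_2}{16\pi}\ge 0$, yielding the stronger bound
\[
\log\int_\Sigma h_2 e^{-u}\,dV_g\ \ge\ \tfrac{1}{16\pi}\int_\Sigma|\nabla_g u|^2\,dV_g - C,
\]
from which $\int_\Sigma h_2 e^{-u}\,dV_g\ge c>0$ follows at once. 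Subtracting this lower bound from the Moser--Trudinger upper bound on $\log\int_\Sigma e^{-u}\,dV_g$ produces, by exact cancellation of the gradient term, $\log\bigl(\int_\Sigma e^{-u}\,dV_g/\int_\Sigma h_2 e^{-u}\,dV_g\bigr)\le C$, which is the ratio estimate.

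The step I expect to be the main subtlety is precisely this last ratio bound: since $h_2$ is merely non-negative and may vanish, no pointwise comparison between $\int_\Sigma e^{-u}\,dV_g$ and $\int_\Sigma h_2 e^{-u}\,dV_g$ is available, and the estimate relies on the identical critical constant $\tfrac{1}{16\pi}$ appearing on both sides of the preceding two inequalities. This matching requires $\rho_1=8\pi$ to be exactly critical and $\rho_2<8\pi$ to be strict; both the cancellation and the non-negativity of $\tfrac{\rho_2}{16\pi}$ would degenerate at $\rho_2=8\pi$.
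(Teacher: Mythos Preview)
Your argument is correct. For the upper bound on $\int_\Sigma h_1 e^u$, the lower bound on $\int_\Sigma h_1 e^u$, and the ratio bound, your approach and the paper's are essentially equivalent: the paper packages the same Moser--Trudinger/Jensen/monotonicity input into the single inequality
\[
J_{\rho_2}(u_0)\ \ge\ 8\pi\log\frac{\int_\Sigma e^u}{\int_\Sigma h_1 e^u}+\rho_2\log\frac{\int_\Sigma e^{-u}}{\int_\Sigma h_2 e^{-u}}-C_\Sigma,
\]
and then bounds one of the two log-ratios trivially by $-\log\|h_i\|_\infty$ to extract the other; your explicit cancellation of the gradient coefficients is precisely what makes the $\bar u$ and $\int|\nabla u|^2$ terms drop out of this inequality. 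The genuine difference is the lower bound on $\int_\Sigma h_2 e^{-u}$: the paper uses Cauchy--Schwarz,
\[
\int_\Sigma h_2 e^{-u}\,dV_g\ \ge\ \frac{\bigl(\int_\Sigma\sqrt{h_1h_2}\,dV_g\bigr)^2}{\int_\Sigma h_1 e^u\,dV_g},
\]
which is where the standing hypothesis $h_1h_2\not\equiv 0$ enters. Your route instead yields the stronger intermediate bound $\log\int_\Sigma h_2 e^{-u}\ge\tfrac{1}{16\pi}\int_\Sigma|\nabla u|^2-C$ directly from the energy, and then obtains the ratio by subtraction; this avoids the hypothesis $h_1h_2\not\equiv 0$ (needing only $h_i\not\equiv 0$ and $\rho_2>0$) at the cost of a slightly longer computation. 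One small inaccuracy: your closing remark that the argument ``degenerates at $\rho_2=8\pi$'' is not quite right---the coefficient $\rho_2/16\pi$ stays positive there, and in fact your scheme still works; the restriction $\rho_2<8\pi$ is used elsewhere in the paper, not in this lemma.
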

\begin{proof}
First, we prove that $ c \le \int_{\Sigma}h_1e^udV_g \le C$ for some constants $C,c>0$, independent of $t \in [0,T]$. 
Indeed, by Moser-Trudinger inequality \eqref{MTineq} and Jensen's inequality, it holds that
\begin{equation}\label{MTineq2}
	\frac12\int_{\Sigma}|\nabla_gu|^2dV_g \ge 8\pi\log\int_{\Sigma}e^{u-\overline{u}}dV_g+\rho_2\log\int_{\Sigma}e^{-u+\overline{u}}dV_g  - C_{\Sigma}.
\end{equation}
Employing \eqref{MTineq2} and the monotonicity of $J_{\rho_2}(u(t))$, we obtain that for all $t \in [0,T]$,
\begin{equation}\label{bf3.1}
    \begin{aligned}
		J_{\rho_2}(u_0)\geq J_{\rho_2}(u(t))
		\geq& \,8\pi\log\int_{\Sigma}e^{u}dV_g+ \rho_2 \log\int_{\Sigma}e^{-u}dV_g-8\pi\log\int_{\Sigma}h_1e^{u}dV_g\\
        &\quad-\rho_2\log\int_{\Sigma}h_2e^{-u}dV_g-C_{\Sigma}.
	\end{aligned}
\end{equation}
This implies that $J_{\rho_2}(u(t)) \geq8\pi\log\int_{\Sigma}e^{u}dV_g -8\pi\log\int_{\Sigma}h_1e^udV_g -\rho_2\log\|h_2\|_{L^\infty(\Sigma)} -C_{\Sigma}$. 
From this inequality, we derive that
\begin{equation}\label{bf3.2}
	0<\exp\Big(-\frac{\rho_2\log\|h_2\|_{L^\infty(\Sigma)} +J_{\rho_2}(u_0)+C_{\Sigma}}{8\pi}\Big)\int_{\Sigma}e^{u_0}dV_g\leq\int_{\Sigma}h_1e^{u}dV_g\leq\|h_1\|_{L^{\infty}(\Sigma)}\int_{\Sigma}e^{u_0}dV_g.
\end{equation}

Next, we prove the uniform boundedness of $\frac{\int_{\Sigma}e^{-u}dV_g}{\int_{\Sigma}h_2e^{-u}dV_g}$. 
From \eqref{bf3.1}, we also deduce that $J_{\rho_2}(u(t)) \geq \rho_2\log\int_{\Sigma}e^{-u}dV_g-\rho_2\log\int_{\Sigma}h_2e^{-u}dV_g-8\pi\log\|h_1\|_{L^\infty(\Sigma)}-C_{\Sigma}$. 
This implies that for all $t\in [0,T]$
\begin{equation*}\label{bf3.4}
	0<{\frac{\int_{\Sigma}e^{-u}dV_g}{\int_{\Sigma}h_2e^{-u}dV_g}}\leq \exp\Big(\frac{J_{\rho_2}(u_0)+8\pi\log\|h_1\|_{L^\infty(\Sigma)}+C_{\Sigma}}{\rho_2}\Big).
\end{equation*}

Finally, we show that $\int_{\Sigma}h_2e^{-u}dV_g \ge c$ for some $c>0$, independent of $t \in [0,T]$. Indeed, by the Cauchy-Schwarz inequality and \eqref{bf3.2}, we obtain
\begin{equation*}\label{bf3.6}
	\int_{\Sigma}h_2e^{-u}dV_g \geq \frac{\big(\int_{\Sigma}\sqrt{h_1h_2}dV_g\big)^2}{\int_{\Sigma} h_1 e^u dV_g} \ge {\frac{\big(\int_{\Sigma}\sqrt{h_1h_2}dV_g\big)^2}{\|h_1\|_{L^{\infty}(\Sigma)}\cdot\int_{\Sigma}e^{u_0}dV_g}} >0.
\end{equation*}
This completes the proof of Lemma \ref{Bounded-F}.
\end{proof}

\medskip

\subsection{A priori estimates and global existence}

We now address the global existence of solutions to the flow \eqref{maineq}, i.e. Theorem \ref{global}.

Let us define the maximal time $T_0>0$ by
\begin{equation*}
T_0:=\sup\Big\{T>0: u\in C^{2+\alpha,1+\alpha/2}(\Sigma \times [0,T]) 
\text{ is the unique solution of \eqref{maineq}} \Big\}.
\end{equation*}
To prove global existence, it suffices to show that $T_0=\infty$. 
For this purpose, we first derive a priori estimates for the solution of \eqref{maineq} in three steps: Step 1. $H^1$-estimate (Proposition \ref{H1}); Step 2. $H^2$-estimate (Proposition \ref{H2}); Step 3. $C^{2+\alpha,1+\alpha/2}$-estimate (Proposition \ref{C2alpha}).

These three steps provide progressively stronger control over the solution, which ultimately allows us to extend the solution beyond any finite maximal time.

\begin{prop}\label{H1}
Let $u$ be the solution of \eqref{maineq} on $[0,T)$ for some $T>0$.
Then there exists a constant $C_{T,1}=C(T,\|u_0\|_{H^1(\Sigma)})$, depending on $\Sigma$, $h_1$ and $h_2$, such that
\[\|u(t)\|_{H^1(\Sigma)} \le C_{T,1}, \quad \forall t \in [0,T).\]
\end{prop}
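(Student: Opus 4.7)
The plan is to combine the energy monotonicity of Lemma \ref{V-conserva} with the sinh-Gordon Moser-Trudinger inequality \eqref{MTineq}, the mass-conservation identity $\int_\Sigma e^u dV_g=\int_\Sigma e^{u_0}dV_g$ and the bounds of Lemma \ref{Bounded-F}. Rearranging $J_{\rho_2}(u(t))\leq J_{\rho_2}(u_0)$ yields
\[
\tfrac{1}{2}\int_\Sigma|\nabla_g u|^2 dV_g+(8\pi-\rho_2)\,\overline u\leq J_{\rho_2}(u_0)+8\pi\log\int_\Sigma h_1 e^u dV_g+\rho_2\log\int_\Sigma h_2 e^{-u} dV_g.
\]
The first logarithm is bounded above by Lemma \ref{Bounded-F}, and the second can be controlled by applying \eqref{MTineq} and absorbing $\overline u$ via mass conservation: writing $\log\int_\Sigma e^{u-\overline u}dV_g=-\overline u+\log\int_\Sigma e^{u_0}dV_g$, inequality \eqref{MTineq} rearranges to $\log\int_\Sigma e^{-u}dV_g\leq \tfrac{1}{16\pi}\int_\Sigma|\nabla_g u|^2 dV_g+C$, whence
\[
\Bigl(\tfrac{1}{2}-\tfrac{\rho_2}{16\pi}\Bigr)\int_\Sigma|\nabla_g u|^2 dV_g+(8\pi-\rho_2)\,\overline u\leq C,
\]
with the gradient coefficient strictly positive because $\rho_2<8\pi$.

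The remaining task is to control $\overline u$. The upper bound $\overline u\leq \log\int_\Sigma e^{u_0}dV_g$ follows immediately from Jensen's inequality and is the easy direction. The lower bound is the main obstacle: because the weight $\rho_1=8\pi$ is critical for \eqref{MTineq}, the natural substitution $\overline u\geq -\tfrac{1}{16\pi}\int_\Sigma|\nabla_g u|^2 dV_g-C$ (again derived from \eqref{MTineq} combined with mass conservation) exactly saturates the previous inequality and yields no coercive $H^1$ estimate. To circumvent this I would exploit the parabolic nature of \eqref{maineq} through the time-integrated dissipation
\[
\int_0^T\!\!\int_\Sigma e^u|\partial_t u|^2 dV_g\,dt\leq J_{\rho_2}(u_0)-\inf_{H^1(\Sigma)} J_{\rho_2}<+\infty,
\]
which is finite because \eqref{MTineq} together with $\rho_2\leq 8\pi$ ensures that $J_{\rho_2}$ is bounded below, together with the differential identity for $\partial_t\overline u$ obtained by multiplying \eqref{maineq} by $e^{-u}$ and integrating. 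A Gr\"onwall-type argument on $[0,T]$ then yields a bound on $\overline u$ depending on $T$ and the initial data.

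Once $\|\nabla_g u\|_{L^2(\Sigma)}$ and $|\overline u|$ are controlled on $[0,T)$, Poincar\'e's inequality $\|u-\overline u\|_{L^2(\Sigma)}\leq C\|\nabla_g u\|_{L^2(\Sigma)}$ gives $\|u(t)\|_{H^1(\Sigma)}\leq C_{T,1}$ uniformly on $[0,T)$. The $T$-dependence of the constant comes precisely from the parabolic lower bound on $\overline u$; a time-independent bound would require a finer analysis that is not available at the critical threshold $\rho_1=8\pi$.
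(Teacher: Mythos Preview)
Your opening steps are correct and match the paper: combining energy monotonicity with the Moser--Trudinger inequality \eqref{MTineq} and mass conservation yields exactly the inequality
\[
\Bigl(\tfrac12-\tfrac{\rho_2}{16\pi}\Bigr)\int_\Sigma|\nabla_g u|^2\,dV_g+(8\pi-\rho_2)\,\overline u\le C,
\]
and you correctly identify that the lower bound on $\overline u$ is the only missing piece (since criticality $\rho_1=8\pi$ makes the elliptic substitution $-\overline u\le\tfrac{1}{16\pi}\|\nabla u\|_{L^2}^2+C$ saturate).

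The gap is in your proposed parabolic remedy. Multiplying \eqref{maineq} by $e^{-u}$ and integrating gives
\[
\frac{d\overline u}{dt}=\int_\Sigma e^{-u}|\nabla_g u|^2\,dV_g+\frac{8\pi\int_\Sigma h_1}{\int_\Sigma h_1 e^u}-(8\pi-\rho_2)\int_\Sigma e^{-u}\,dV_g-\frac{\rho_2\int_\Sigma h_2 e^{-2u}}{\int_\Sigma h_2 e^{-u}},
\]
and the last two terms are not controlled a priori. Feeding back the only available bounds (your own gradient estimate plus \eqref{MTineq}) gives at best $\int_\Sigma e^{-u}\le Ce^{-\overline u}$ and $\int_\Sigma h_2 e^{-2u}\le Ce^{-c\,\overline u}$, so the comparison inequality is of the form $\overline u\,'\ge -Ce^{-\alpha\overline u}$. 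Such an ODE allows $\overline u\to-\infty$ in finite time (integrating gives $e^{\alpha\overline u(t)}\ge e^{\alpha\overline u(0)}-\alpha Ct$, useful only for $t<e^{\alpha\overline u(0)}/(\alpha C)$), so the Gr\"onwall step does not yield a bound valid on an arbitrary interval $[0,T)$. Routing through the dissipation via $|\partial_t\overline u|\le(\int_\Sigma e^{-u})^{1/2}(\int_\Sigma e^u|\partial_t u|^2)^{1/2}$ runs into the same exponential obstruction.

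The paper avoids $e^{-u}$ altogether in this step. Its parabolic ingredient is a Gr\"onwall bound on $\int_\Sigma e^{2u}$: differentiating and using Lemma~\ref{Bounded-F} gives $\tfrac{d}{dt}\int_\Sigma e^{2u}\le C\int_\Sigma e^{2u}+C$ (the $e^{-u}$ contributions enter only as $-\rho_2\int_\Sigma h_2/\int_\Sigma h_2 e^{-u}\le 0$), hence $\int_\Sigma e^{2u(t)}\le C(T)$. Combining this $L^2$ bound on $e^u$ with the conserved $L^1$ mass forces the level set $A(t)=\{e^{u(t)}\ge\tfrac12\int_\Sigma e^{u_0}\}$ to have measure $\ge c_T>0$; since $u\ge\log(\tfrac12\int_\Sigma e^{u_0})$ on $A(t)$, one obtains $|\overline u|\le\sqrt{1-c_T}\,\|u\|_{L^2}+C(T)$, which together with Poincar\'e and your gradient inequality (with the Young term $\epsilon\|u\|_{L^2}^2$) closes the estimate.
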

\begin{proof}
Since $J_{\rho_2}(u(t))$ is decreasing in $t$, by subtracting a multiple of the Moser-Trudinger inequality \eqref{MTineq} from the definition of $J_{\rho_2}(u(t))$, we obtain
\begin{equation}\label{H1.3}
\begin{aligned}
J_{\rho_2}(u_0) \ge J_{\rho_2}(u(t))&={\frac{1}{2}}\int_{\Sigma}|\nabla_gu|^2dV_g-8\pi\log\int_{\Sigma}h_1e^{u}dV_g-\rho_2\log\int_{\Sigma}h_2e^{-u}dV_g+(8\pi-\rho_2)\overline{u}\\
&\geq{\frac{8\pi-\rho_2}{16\pi}}\int_{\Sigma}|\nabla_gu|^2dV_g-\big(8\pi-\rho_2\big)\log\int_{\Sigma}e^udV_g+\big(8\pi-\rho_2\big)\overline{u}-C
\end{aligned}
\end{equation}
for a constant $C$ independent of $u_0$ and $t$.
Since $\int_\Sigma e^{u(t)}dV_g=\int_\Sigma e^{u_0}dV_g$, applying Young's inequality, we obtain that for small $\epsilon>0$
\begin{equation}\label{H1.4}
\begin{aligned}
\|\nabla_gu(t)\|^2_{L^2(\Sigma)} & \le \frac{16\pi}{8\pi-\rho_2}J_{\rho_2}(u_0)-16\pi\overline{u}(t)+16\pi\log\int_{\Sigma}e^{u_0}dV_g+C\\
&\leq C(\|u_0\|_{H^1(\Sigma)}) + \epsilon\|u\|^2_{L^2(\Sigma)}+C_{\epsilon}
\end{aligned}
\end{equation}
where $C(\|u_0\|_{H_1(\Sigma)})$ denotes a constant depending only on $\|u_0\|_{H^1(\Sigma)}$, and $C_\epsilon$ is a constant depending on $\epsilon$.

Next, differentiating $\int_{\Sigma}e^{2u}dV_g$ with respect to $t$, we obtain that there exists a constant $C=C(\|u_0\|_{H^1(\Sigma)})$ such that
\begin{equation*}\label{H1.1}
\begin{aligned}
\frac12{\frac{d}{dt}}\int_{\Sigma}e^{2u(t)}dV_g&=\int_{\Sigma}e^{u}\Big[\Delta_gu(t)+8\pi\big({\frac{h_1e^u}{\int_{\Sigma}h_1e^udV_g}}-1\big)-\rho_2
\big(\frac{h_2e^{-u}}{\int_{\Sigma}h_2e^{-u}dV_g}-1\big)\Big]dV_g\\
&=-\int_{\Sigma}e^{u}|\nabla_gu|^2dV_g+8\pi{\frac{\int_{\Sigma}h_1e^{2u}dV_g}{\int_{\Sigma}h_1e^udV_g}}-\rho_2{\frac{\int_{\Sigma}h_2dV_g}{\int_{\Sigma}h_2e^{-u}dV_g}}-(8\pi-\rho_2)\int_{\Sigma}e^{u}dV_g\\
&\leq C\int_{\Sigma}e^{2u(t)}dV_g + C
\end{aligned}
\end{equation*}
Integrating this differential inequality, we conclude that
\begin{equation}\label{H1.2}
\int_{\Sigma}e^{2u(t)}dV_g\leq e^{Ct}\int_{\Sigma}e^{2u_0}dV_g + e^{Ct}\leq C(T,\|u_0\|_{H^1(\Sigma)}) \ \ \text{ for } t \in [0,T).
\end{equation}
In order to estimate the average value of $u(t)$, set
\begin{equation*}
A(t):=\Big\{x\in \Sigma : e^{u(x,t)} \ge \frac12 \int_\Sigma e^{u(t)}dV_g\Big\}.
\end{equation*}

By H\"older's inequality and \eqref{H1.2}, it follows that
\begin{equation*}
\int_\Sigma e^{u_0}dV_g = \int_{\Sigma\setminus A(t)} e^{u(t)}dV_g + \int_{A(t)} e^{u(t)}dV_g \le \frac12 \int_\Sigma e^{u_0} dV_g + C(T,\|u_0\|_{H^1}) |A(t)|_g^{1/2}.
\end{equation*}
Therefore, there exists $c_T>0$ such that
\[c_T \le |A(t)| \le 1, \ \ \text{ and } \ \  |A(t)|\log\frac{\int_\Sigma e^{u_0}dV_g}{2} \le \int_{A(t)} u(t) dV_g \le \int_{A(t)} e^{u(t)}dV_g.\]
Consequently, by H\"older's inequality and \eqref{H1.2}, we have
\begin{align*}
|\overline{u}(t)| &\le |\Sigma\setminus A(t)|^{1/2} \|u(t)\|_{L^2(\Sigma\setminus A(t))} + \Big|\int_{A(t)} u(t)dV_g\Big|\le \sqrt{1-c_T} \|u(t)\|_{L^2(\Sigma)}+C(T,\|u_0\|_{H^1}).
\end{align*}
Combining this estimate with the Poincar\'e inequality, we deduce
\[ \|u(t)\|_{L^2} \le  c\|\nabla u(t)\|_{L^2}+ \sqrt{1-c_T} \|u(t)\|_{L^2(\Sigma)}+C(T,\|u_0\|_{H^1}),\]
which implies
\begin{equation}\label{H1.6}
\|u(t)\|_{L^2} \le C \|\nabla u(t)\|_{L^2}+ C(T,\|u_0\|_{H^1}).
\end{equation}
Finally, combining \eqref{H1.4} and \eqref{H1.6}, and choosing $\epsilon$ sufficiently small, we complete the proof.
\end{proof}

\begin{prop}\label{H2}
Let $u$ be the solution of \eqref{maineq} on $[0,T)$ for some $T>0$.
Then there exists a constant $C_{T,2}=C(T,\|u_0\|_{H^2(\Sigma)})$, depending on $\Sigma$, $h_1$ and $h_2$, such that
\[\|u(t)\|_{H^2(\Sigma)} \le C_{T,2}, \quad \forall t \in [0,T).\]
\end{prop}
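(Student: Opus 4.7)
The plan is to bootstrap from the $H^1$ bound of Proposition \ref{H1} to an $H^2$ bound by interpreting \eqref{maineq} as an elliptic equation at each time slice and controlling the parabolic remainder through a differential energy identity. Since $\|u(t)\|_{H^1(\Sigma)}$ is bounded uniformly in $t\in[0,T)$, the Moser--Trudinger inequality \eqref{MTineq} applied to $\pm u(t)$ yields $\|e^{\pm u(t)}\|_{L^p(\Sigma)}\le C_{T,p}$ for every $p\in[1,\infty)$, uniformly in time. Combined with the lower bounds for $\int_\Sigma h_1 e^u \,dV_g$ and $\int_\Sigma h_2 e^{-u} \,dV_g$ from Lemma \ref{Bounded-F}, this shows that the inhomogeneity
\[
F(u):= 8\pi\!\left(\frac{h_1 e^u}{\int_\Sigma h_1 e^u \,dV_g}-1\right) -\rho_2\!\left(\frac{h_2 e^{-u}}{\int_\Sigma h_2 e^{-u} \,dV_g}-1\right)
\]
satisfies $\|F(u(t))\|_{L^p(\Sigma)}\le C_{T,p}$ for every $p<\infty$, uniformly in $t\in[0,T)$.

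Next, viewing \eqref{maineq} as the pointwise elliptic identity $-\Delta_g u = F(u)-e^u u_t$ at fixed $t$ and using standard $L^2$ elliptic regularity on the closed surface $\Sigma$, one obtains
\[
\|u(t)\|_{H^2(\Sigma)} \le C\bigl(\|u(t)\|_{L^2(\Sigma)}+\|F(u(t))\|_{L^2(\Sigma)}+\|e^{u(t)} u_t(t)\|_{L^2(\Sigma)}\bigr).
\]
The first two terms are already controlled by the previous step and Proposition \ref{H1}, so the task reduces to a uniform-in-time bound for
\[
\mathcal F(t) := \int_\Sigma e^{2u(t)}\, u_t(t)^2\, dV_g .
\]
To achieve this, I would differentiate the identity $e^u u_t=\Delta_g u + F(u)$ in time to obtain the evolution equation $e^u u_{tt}+e^u u_t^2 = \Delta_g u_t+F'(u)u_t$ for $u_t$, and then test it against $e^u u_t$. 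A short computation shows that after integration by parts the cubic $\int e^{2u}u_t^3$ contributions cancel, producing a differential inequality of the form
\[
\frac{d}{dt}\mathcal F(t)+c\int_\Sigma e^u|\nabla u_t|^2\, dV_g \le \int_\Sigma e^u u_t^2|\nabla u|^2\, dV_g + 2\int_\Sigma e^u F'(u)\, u_t^2\, dV_g .
\]
The right-hand side is estimated through H\"older's inequality and the uniform $L^p$ bounds of the previous step, the 2D Sobolev embedding $H^1(\Sigma)\hookrightarrow L^p(\Sigma)$ for all $p<\infty$, and Ladyzhenskaya's interpolation $\|\nabla u\|_{L^4}^2\le C\|\nabla u\|_{L^2}\|u\|_{H^2}$; absorbing the resulting $\|u\|_{H^2}$ factors through the elliptic reduction above yields a Gronwall-type inequality $\dot{\mathcal F}(t)\le P(\mathcal F(t))$ with $P$ polynomial and coefficients depending only on $T$, $\Sigma$, $h_1$, $h_2$ and $\|u_0\|_{H^1(\Sigma)}$. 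Since $\mathcal F(0)=\int_\Sigma(\Delta_g u_0+F(u_0))^2\, dV_g$ is finite and controlled by $\|u_0\|_{H^2(\Sigma)}^2$, Gronwall on $[0,T)$ yields the desired uniform bound $\mathcal F(t)\le C_{T,2}$.

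The main obstacle is the absence of a uniform $L^\infty$ bound on $e^{u(t)}$: every step has to be performed through Moser--Trudinger $L^p$ integrability rather than pointwise control, which makes the absorption of the mixed term $\int_\Sigma e^u u_t^2|\nabla u|^2$ delicate and necessitates the coupling of Ladyzhenskaya's interpolation with the reduction $\|u\|_{H^2}\le C(1+\mathcal F^{1/2})$. A further technical subtlety is that the nonlocal pieces of $F'(u)u_t$, which contain factors of the form $(\int_\Sigma h_i e^{\pm u}\,dV_g)^{-2}\int_\Sigma h_i e^{\pm u}\, u_t\,dV_g$, must be handled via Cauchy--Schwarz against the $L^p$ bounds of Lemma \ref{Bounded-F} together with the conservation identity $\int_\Sigma e^u u_t\,dV_g=0$ (obtained by differentiating Lemma \ref{V-conserva}(i)); these contributions are what produce the nonlinear structure of $P$ in the final Gronwall step.
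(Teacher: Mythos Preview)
Your strategy---bootstrap from the $H^1$ bound via Moser--Trudinger, reduce $\|u\|_{H^2}$ to a bound on $\|e^u u_t\|_{L^2}$ through elliptic regularity, and then control the latter by an energy identity for the time-differentiated equation---is close in spirit to the paper's proof, but has two related gaps that prevent the argument from closing as written.

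The first is a weight mismatch. Testing $e^u u_{tt}+e^u u_t^2=\Delta_g u_t+\partial_t F(u)$ against $e^u u_t$ produces the dissipative term $\int_\Sigma e^u|\nabla u_t|^2$, whose weight is $e^u$, whereas your tracked quantity $\mathcal F=\int_\Sigma e^{2u}u_t^2$ carries weight $e^{2u}$. To absorb the mixed term $\int_\Sigma e^u u_t^2|\nabla u|^2$ via Ladyzhenskaya you are naturally led to the variable $\nu:=e^{u/2}u_t$, for which $\|\nu\|_{L^4}^2\le C\|\nu\|_{L^2}\|\nu\|_{H^1}$; but $\|\nabla\nu\|_{L^2}^2$ and your good term $\int e^u|\nabla u_t|^2$ differ precisely by a multiple of the bad term itself, and the prefactor $\|\nu\|_{L^2}^2=\int e^u u_t^2$ is neither $\mathcal F$ nor bounded pointwise in $t$ by the $H^1$ estimate. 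The paper resolves this by working with $\nu$ from the outset and tracking $\|\Delta_g u\|_{L^2}^2$ directly: one computes $\tfrac12\frac{d}{dt}\int|\Delta_g u|^2=\int(e^{u/2}\nu-F(u))\,\Delta_g(e^{-u/2}\nu)$, after which the dissipation is exactly $-\|\nabla\nu\|_{L^2}^2$ and all weights match.

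The second, and more serious, gap is the Gronwall step. Even granting the intermediate estimates, what one actually obtains is an inequality of the shape
\[
\frac{d}{dt}\bigl(1+\|\Delta_g u\|_{L^2}^2\bigr)\ \le\ C_T\bigl(1+\|\nu\|_{L^2}^2\bigr)\bigl(1+\|\Delta_g u\|_{L^2}^2\bigr),
\]
and the coefficient $\|\nu(t)\|_{L^2}^2=\int_\Sigma e^{u}u_t^2$ is \emph{not} bounded pointwise in $t$ by anything controlled so far. An autonomous inequality $\dot{\mathcal F}\le P(\mathcal F)$ with $P$ genuinely superlinear yields a bound only up to the (possibly small) blow-up time of the comparison ODE, not on all of $[0,T)$. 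The missing ingredient is the energy identity $\int_0^T\|\nu(t)\|_{L^2}^2\,dt=J_{\rho_2}(u_0)-J_{\rho_2}(u(T))\le C$ from Lemma~\ref{V-conserva}(ii): with this time-integrability the displayed inequality integrates to $\log(1+\|\Delta_g u\|^2)\le C_T$, which is exactly how the paper closes the argument. Your proposal never invokes this identity, and without it the final step does not go through.
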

\begin{proof}
By Proposition \ref{H1}, it suffices to estimate $\|\Delta_g u(t)\|_{L^2(\Sigma)}$.
To this end, we introduce the auxiliary function $\nu(t)=\frac{\partial u(t)}{\partial t}e^{u(t)/2}$.
Then, by a direct computation we obtain
\begin{equation}\label{H2.5}
\begin{aligned}
&\frac12\frac{d}{dt}\int_{\Sigma}\Big(1+\big|\Delta_gu(t)\big|^2\Big)dV_g= \int_{\Sigma}\Delta_gu(t)\Delta_g\big({\frac{\partial u(t)}{\partial t}}\big)dV_g\\
&=\int_{\Sigma}\Big(e^{\frac{u}{2}}\nu(t)-8\pi (\frac{h_1e^u}{\int_{\Sigma}h_1e^udV_g}-1)+\rho_2\big(\frac{h_2e^{-u}}{\int_{\Sigma}h_2e^{-u}dV_g}-1\big)\Big)\Delta_g\big(e^{-\frac{u}{2}}\nu(t)\big)dV_g\\
&=-\int_{\Sigma}|\nabla_g\nu(t)|^2dV_g+{\frac{1}{4}}\int_{\Sigma}{\nu(t)}^2|\nabla_gu(t)|^2dV_g\\
&\quad +{\frac{8\pi}{\int_{\Sigma}h_1e^{u}dV_g}}\int_{\Sigma}e^{\frac{u(t)}{2}}\big(\nabla_gh_1+h_1\nabla_gu\big)\big(\nabla_g\nu(t) -\frac12\nu(t)\nabla_g u\big)dV_g\\
&\quad-{\frac{\rho_2}{\int_{\Sigma}h_2e^{-u}dV_g}}\int_{\Sigma}e^{-\frac{3u}{2}}(\nabla_gh_2-h_2\nabla_gu)\big(\nabla_g\nu(t)-{\frac{1}{2}}\nu(t)\nabla_gu\big)dV_g.
\end{aligned}
\end{equation}

By Lemma \ref{Bounded-F}, Proposition \ref{H1} and the Moser-Trudinger inequality, there exists a constant $C_{T},$ such that for all $p\geq1,\ t\in[0,T)$
\begin{equation}\label{H2.1}
\frac{1}{\int_\Sigma h_1 e^{u(t)}dV_g}+\frac{1}{\int_\Sigma h_2 e^{-u(t)}dV_g}+\int_\Sigma e^{pu(t)}dV_g+\int_\Sigma e^{-pu(t)}dV_g \le C_T.
\end{equation}
Now, using \eqref{H2.1} together with Hölder’s inequality and Young’s inequality, we can estimate the third term on the right-hand side of \eqref{H2.5} as follows:
\begin{equation}\label{H2.6}
\begin{aligned}
&{\frac{8\pi}{\int_{\Sigma}h_1e^{u}dV_g}}\int_{\Sigma}e^{\frac{u(t)}{2}}\big(\nabla_gh_1+h_1\nabla_gu(t)\big)\big(\nabla_g\nu(t)-{\frac{1}{2}}\nu(t)\nabla_gu(t)\big)dV_g\\
\leq& C{\frac{\big(\int_{\Sigma}e^{2u}dV_g\big)^{\frac{1}{4}}}{\int_{\Sigma}h_1e^{u}dV_g}}\Big(\int_{\Sigma}\big|\nabla_gh_1+h_1\nabla_gu(t)\big|^4dV_g\Big)^{\frac{1}{4}}\Big(\int_{\Sigma}|\nabla_g\nu(t)-{\frac{1}{2}}\nu(t)\nabla_gu(t)|^2dV_g\Big)^{\frac{1}{2}}\\
\leq& C_{T}\big(1+\|\nabla_gu\|_{L^4(\Sigma)}\big)\left\{\|\nabla_g\nu\|_{L^2(\Sigma)}+\Big(\int_{\Sigma}{\nu(t)}^2|\nabla_gu|^2dV_g\Big)^{\frac{1}{2}}\right\}\\
\leq& \epsilon\|\nabla_g\nu\|^2_{L^2(\Sigma)}+\epsilon\int_{\Sigma}{\nu(t)}^2|\nabla_gu(t)|^2dV_g+C_{\epsilon,T}\big(1+\|\nabla_gu\|^2_{L^4(\Sigma)}\big),
\end{aligned}
\end{equation}
where $C_{\epsilon,T}$ depends only on $\Sigma,T,\epsilon,\|u_0\|_{H^1(\Sigma)}.$
Similarly, we can estimate the fourth term on right-hand side of \eqref{H2.5}. More precisely,
\begin{equation}\label{H2.7}
\begin{aligned}
&-{\frac{\rho_2}{\int_{\Sigma}h_2e^{-u}dV_g}}\int_{\Sigma}e^{-\frac{3u(t)}{2}}(\nabla_gh_2-h_2\nabla_gu(t))\big(\nabla_g\nu(t)-{\frac{1}{2}}\nu(t)\nabla_gu(t)\big)dV_g\\
&\leq \epsilon\|\nabla_g\nu\|^2_{L^2(\Sigma)}+\epsilon\int_{\Sigma}{\nu(t)}^2|\nabla_gu(t)|^2dV_g+C_{\epsilon,T}\big(1+\|\nabla_gu\|^2_{L^4(\Sigma)}\big).
\end{aligned}
\end{equation}
\par

Combining \eqref{H2.6} and \eqref{H2.7}, and choosing $\epsilon>0$ sufficiently small, we simplify \eqref{H2.5} into
\begin{equation}\label{H2.7*}
\begin{aligned}
{\frac{1}{2}}{\frac{d}{dt}}\int_{\Sigma}\Big(1+\big|\Delta_gu(t)\big|^2\Big)dV_g \leq- \frac12\|\nabla_g\nu(t)\|^2_{L^2(\Sigma)}+\int_{\Sigma}{\nu(t)}^2|\nabla_gu|^2dV_g+C_{\epsilon,T}\big(1+\|\nabla_gu\|^2_{L^4(\Sigma)}\big).
\end{aligned}
\end{equation}
On the other hand, by Proposition \ref{H1}, H\"older's inequality and Gagliardo-Nirenberg inequality, we deduce 
\begin{equation}\label{H2.8}
\begin{aligned}
\int_{\Sigma}\nu^2(t)|\nabla_gu|^2dV_g&\leq C\|\nu(t)\|_{L^2(\Sigma)}\|\nu(t)\|_{H^1(\Sigma)}\|u(t)\|_{H^1(\Sigma)}\|u(t)\|_{H^2(\Sigma)}\\
&\leq C\big(T,\|u_0\|_{H^1(\Sigma)}\big)\|\nu(t)\|_{L^2(\Sigma)}\|\nu(t)\|_{H^1(\Sigma)}\|u(t)\|_{H^2(\Sigma)}\\
&\leq\epsilon\big(\|\nabla_g\nu\|^2_{L^2(\Sigma)}+\|\nu\|^2_{L^2(\Sigma)}\big)+C_{\epsilon,T}\|\nu\|^2_{L^2(\Sigma)}\big(\|\Delta_gu(t)\|^2_{L^2(\Sigma)}+1\big),\\
\end{aligned}
\end{equation}
and similarly
\begin{equation}\label{H2.9}
\begin{aligned}
\|\nabla_gu\|^2_{L^4(\Sigma)}&\leq C\|u\|_{H^1(\Sigma)}\|u\|_{H^2(\Sigma)} \leq C\big(T,\|u_0\|_{H^1(\Sigma)}\big)\big(\|\Delta_gu(t)\|^2_{L^2(\Sigma)}+1\big).
\end{aligned}
\end{equation}
\par

Therefore, combining \eqref{H2.7*}, \eqref{H2.8} and \eqref{H2.9} together, we obtain
\begin{equation*}
\begin{aligned}
{\frac{d}{dt}}\int_{\Sigma}\Big(1+|\Delta_gu|^2\Big)dV_g 
&\leq C\big(T,\|u_0\|_{H^1(\Sigma)}\big)\big(1+\|\nu\|^2_{L^2(\Sigma)}\big)\big(1+\|\Delta_gu\|^2_{L^2(\Sigma)}\big).
\end{aligned}
\end{equation*}
As a consequence, using the energy identity $J_{\rho_2}(u(T))-J_{\rho_2}(u_0)=-\int_0^T\int_{\Sigma}|{\frac{\partial u}{\partial t}}|^2e^{u}dV_gdt$ and integrating in time, we obtain
\begin{equation*}
\begin{aligned}
\log\Big(1+\|\Delta_gu\|^2_{L^2(\Sigma)}\Big)&\leq C\big(T,\|u_0\|_{H^1(\Sigma)}\big)\Big(1+\int_0^T\big(1+\|{\frac{\partial u}{\partial t}}e^{\frac{u}{2}}\|^2_{L^2(\Sigma)}\big)dt\Big)\\
&\leq C\big(T,\|u_0\|_{H^1(\Sigma)}\big)\Big(1+J_{\rho_2}(u_0)-J_{\rho_2}(u(T))\Big)\leq C\big(T,\|u_0\|_{H^1(\Sigma)}\big).
\end{aligned}
\end{equation*}
This completes the proof.
\end{proof}

\begin{prop}\label{C2alpha}
Let $u$ be the solution of \eqref{maineq} on $[0,T)$ for some $T>0$.
Then there exists a constant $C_{T,3}=C(T,\|u_0\|_{C^{2+\alpha}(\Sigma)})$, depending on $\Sigma$, $h_1$, $h_2$ such that
\[\|u(t)\|_{C^{2+\alpha,1+\alpha/2}(\Sigma \times [0,T))} \le C_{T,3}.\]
\end{prop}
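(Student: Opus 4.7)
The plan is to close the a priori estimates by a Krylov-Safonov—Schauder bootstrap on the quasilinear equation, leveraging the $H^2$ bound of Proposition \ref{H2}.

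First, since $\Sigma$ is a closed two-dimensional manifold, the Sobolev embedding $H^2(\Sigma)\hookrightarrow C^0(\Sigma)$ together with Proposition \ref{H2} yields
\[\|u\|_{L^\infty(\Sigma\times[0,T))}\le C(T,\|u_0\|_{H^2(\Sigma)}).\]
In particular, $e^{\pm u}$ is pinched between positive constants, and by Lemma \ref{Bounded-F} the same holds for the nonlocal denominators $\int_\Sigma h_i e^{\pm u}\,dV_g$. Multiplying \eqref{maineq} by $e^{-u}$, I would rewrite the flow as the linear parabolic equation in non-divergence form
\begin{equation}\label{linform}
u_t - a(x,t)\,\Delta_g u = F(x,t),
\end{equation}
where $a(x,t):=e^{-u(x,t)}$ is uniformly elliptic and $F(x,t)$ is uniformly bounded on $\Sigma\times[0,T)$.

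Viewing \eqref{linform} as a linear parabolic PDE for $u$ with merely bounded measurable leading coefficient $a$ and bounded source $F$, the equation falls under the scope of the Krylov-Safonov H\"older estimate. Writing $\Delta_g$ in local charts produces a bounded first-order term which can be absorbed into the right-hand side using $\nabla u\in L^p$ for all $p<\infty$, itself a consequence of Proposition \ref{H2}. This yields, for some $\beta\in(0,1)$ depending only on the ellipticity constants,
\[\|u\|_{C^{\beta,\beta/2}(\Sigma\times[0,T))}\le C_T,\]
where control up to $t=0$ follows from the compatibility with $u_0\in C^{2+\alpha}\subset C^{\beta}$ and the short-time estimate provided by Lemma \ref{short-time}.

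Once $u$ is parabolically H\"older continuous, the coefficient $a=e^{-u}$ and the right-hand side $F$ (which depends on $h_1$, $h_2$, $u$ and on integral quantities that are H\"older in $t$) both belong to $C^{\beta,\beta/2}$. Classical parabolic Schauder estimates (see, e.g., Friedman's monograph or Ladyzhenskaya-Solonnikov-Ural'tseva) applied to \eqref{linform} then give $u\in C^{2+\beta,1+\beta/2}(\Sigma\times[0,T))$. A second Schauder pass, with the coefficient $a$ now itself in $C^{2+\beta,1+\beta/2}$, allows one to match the H\"older exponent $\alpha\in(0,1)$ of the initial datum and to conclude $\|u\|_{C^{2+\alpha,1+\alpha/2}(\Sigma\times[0,T))}\le C_{T,3}$.

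The principal obstacle is the passage from the merely $L^\infty$ control on $a=e^{-u}$ delivered by Proposition \ref{H2} to the H\"older regularity required by Schauder theory; this gap is exactly what Krylov-Safonov is designed to close. A secondary technical point is to prevent a boundary-layer blow-up of the H\"older norm as $t\downarrow 0$, which is resolved by patching the short-time $C^{2+\alpha,1+\alpha/2}$ solution from Lemma \ref{short-time} with the Schauder bound on $[\tau,T)$.
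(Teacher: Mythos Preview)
Your approach is correct, but it differs from the paper's in how you obtain the parabolic H\"older continuity of $u$. You invoke Krylov--Safonov to pass from the $L^\infty$ bound on $a=e^{-u}$ to $u\in C^{\beta,\beta/2}$ for some small $\beta$, and then bootstrap twice via Schauder to reach the target exponent $\alpha$. The paper instead proves $u\in C^{\alpha,\alpha/2}$ directly and elementarily: the spatial H\"older bound comes from $H^2(\Sigma)\hookrightarrow C^{0,\alpha}(\Sigma)$, while the temporal one is obtained by an averaging trick over balls of radius $\sqrt{t_2-t_1}$, using the uniform $L^2$ control on $\partial_t u$ furnished by the $H^2$ bound (since $\partial_t u = e^{-u}\Delta_g u + \text{bounded}$). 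A single Schauder pass then finishes.

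What each buys: your route is robust and packages the hard step into a black-box theorem, at the cost of a heavier dependence (Krylov--Safonov on manifolds with lower-order terms) and an extra Schauder iteration to upgrade $\beta$ to $\alpha$. The paper's argument is self-contained and hits the exponent $\alpha$ in one shot, but relies on the specific two-dimensional embedding $H^2\hookrightarrow C^{0,\alpha}$ and the explicit structure of the equation to control $\|\partial_t u(\cdot,t)\|_{L^2}$ pointwise in $t$. Your handling of the $t\downarrow 0$ boundary via patching with Lemma~\ref{short-time} is the same as what the paper implicitly does.
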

\begin{proof}
For the proof of this proposition, it suffices to show the following estimate:
\begin{equation}\label{c3.0}
|u(x,t_1)-u(y,t_2)|\leq C\Big(|x-y|^{\alpha}+|t_1-t_2|^{\frac{\alpha}{2}}\Big),\ \ \text{for any}\ x,y\in\Sigma;\ \ t_1,t_2\in[0,T),
\end{equation}
where $|x-y|$ denotes $\dist_g(x,y)$ for simplicity.
In view of \eqref{c3.0}, the classical parabolic Schauder estimates (see, e.g., \cite[Chapter 3]{Friedman}) yield the desired conclusion.

Now we prove \eqref{c3.0}.
By Proposition \ref{H2}, we have $\|u(t)\|_{H^2(\Sigma)} \le C_{T,2}$, and by Sobolev embedding theorem, there exists a constant $C_1$ such that $\|u(t)\|_{C^{0,\alpha}(\Sigma)} \le C_1$ for all $t\in [0,T)$.
Therefore, it is enough to prove
\begin{equation}\label{c3.1}
\big|u(x,t_1)-u(x,t_2)\big| \le C |t_1-t_2|^{\frac{\alpha}{2}} \ \ \text{ for all } \ \ x \in \Sigma, t_1, t_2 \in [0,T).
\end{equation}
(i) If $t_2-t_1\geq1,$ then we have
\begin{equation}\label{c3.2}
\big|u(x,t_1)-u(x,t_2)\big| \leq C(T,\|u_0\|_{H^2(\Sigma)}) \le C(T,\|u_0\|_{H^2(\Sigma)})\big|t_1-t_2\big|^{\frac{\alpha}{2}}.
\end{equation}
(ii) If $0<t_2-t_1<1,$ set $s=\min\{r_0/2,\sqrt{t_2-t_1}\}$, where $r_0$ is the injectivity radius of $\Sigma$. Then, we have 
\begin{equation}\label{c3.3}
\begin{aligned}
\big|u(x,t_1)-u(x,t_2)\big|&={\frac{1}{|B_{s}(x)|}}\int_{B_{s}(x)}\big|u(x,t_1)-u(x,t_2)\big|dV_g(y)\\
&\leq C \int_{B_{s}(x)}\sum_{i=1}^2 \frac{\big|u(x,t_i)-u(y,t_i)\big|}{t_2-t_1}+\frac{\big|u(y,t_1)-u(y,t_2)\big|}{t_2-t_1}dV_g(y),
\end{aligned}
\end{equation}
where we used $cs^2 \le |B_s(x)| \le C s^2$ on $(\Sigma,g)$ with constants $C,c>0$ depending only on $(\Sigma,g)$.

Let us calculate the first term of the right-hand side on \eqref{c3.3}. By H\"older continuity $\|u(t)\|_{C^{0,\alpha}(\Sigma)} \le C_1$, we obtain that, for $i=1,2$,
\begin{equation}\label{c3.4}
\begin{aligned}
&\int_{B_{s}(x)} \frac{\big|u(x,t_i)-u(y,t_i)\big|}{t_2-t_1}dV_g(y)  \leq C\int_{B_{s}(x)}\frac{\big|x-y\big|^{\alpha}}{t_2-t_1}dV_g(y)\leq C\frac{s^{\alpha+2}}{t_2-t_1} \leq C\big(t_2-t_1\big)^{\frac{\alpha}{2}}.
\end{aligned}
\end{equation}
For the second term, we obtain
\begin{equation}\label{c3.7}
\begin{aligned}
\int_{B_{s}(x)}\frac{\big|u(y,t_1)-u(y,t_2)\big|}{t_2-t_1}dV_g(y)
&\leq C\sup\limits_{t_1\leq t\leq t_2}\int_{B_{s}(x)}\Big|{\frac{\partial u(t)}{\partial t}}\Big|dV_g(y)\\
&\leq C|B_s(x)|^{1/2} \sup\limits_{t_1\leq t\leq t_2}\big(\int_{B_{s}(x)}\Big|\frac{\partial u(t)}{\partial t}\Big|^2dV_g(y)\big)^{\frac{1}{2}}\\
&\leq C(T,\|u_0\|_{H^2})\big(t_2-t_1\big)^{\frac{\alpha}{2}}.
\end{aligned}
\end{equation}
since $\|\Delta_g u(t)\|_{L^2}, \|u(t)\|_{L^\infty} \le C(T,\|u_0\|_{H^2})$ and $\int_\Sigma h_1 e^u dV_g, \int_\Sigma h_2 e^{-u}dV_g \ge c>0$ by Lemma \ref{Bounded-F} and Proposition \ref{H2}.
Combining \eqref{c3.2}--\eqref{c3.7}, we obtain \eqref{c3.1} and this completes the proof.
\end{proof}

We now complete the proof of the global existence.

\begin{proof}[Proof of Theorem \ref{global}.]
Suppose, by contradiction, that $T_0 < \infty$. 
By the a priori estimates in Proposition \ref{C2alpha}, the solution $u(t)$ remains bounded in $C^{2+\alpha}(\Sigma)$ up to $t=T_0$, 
and hence the short-time existence lemma guarantees that $u$ can be extended beyond $T_0$. 
This contradicts the definition of $T_0$ as the maximal existence time.
Therefore, we conclude that $T_0 = \infty$.
This completes the proof of the global existence and uniqueness of solutions to the flow \eqref{maineq}.
\end{proof}

\section{Blow-up Analysis}
In this section we investigate the blow-up behavior of the flow \eqref{maineq}.
We choose a sequence of times $t_n\to\infty$ (see \eqref{Vc}) and study the behavior of $u(t_n)$.
We determine the number of blow-up points on $\Sigma$ and establish a uniform upper bound for the second (normalized) component (Proposition~\ref{Never-S}).
We also derive an energy lower bound in the blow-up regime.
All normalizations and rescalings used for blow-up subsequences will be introduced where they are first needed.

To this end, we extract a sequence $t_n\to\infty$ along which the time–derivative term vanishes in a suitable sense. 
Since $J_{\rho_2}(u(t))$ is nonincreasing in $t$ (Lemma~\ref{V-conserva}(ii)) and bounded from below, we have
\begin{equation}\label{sec3eq1}
J_{\rho_2}(u(0))-\lim_{t\to\infty} J_{\rho_2}(u(t))
= \int_0^\infty\!\!\int_{\Sigma}\Bigl|\frac{\partial u}{\partial t}(t)\Bigr|^2 e^{u(t)}\,dV_g\,dt \;\le C.
\end{equation}
Hence there exists $t_n\to\infty$ such that
\begin{equation}\label{Vc}
\int_{\Sigma}\Bigl|\frac{\partial u}{\partial t}(t_n)\Bigr|^2 e^{u(t_n)}\,dV_g \;\longrightarrow\; 0 \quad\text{as } n\to\infty.
\end{equation}

For simplicity, set $u_n:=u(t_n)$ and introduce the normalized functions
\begin{equation}\label{Def-u1u2}
u_1^n := u_n - \log\!\int_{\Sigma} h_1 e^{u_n}\,dV_g,\qquad 
u_2^n := -\,u_n - \log\!\int_{\Sigma} h_2 e^{-u_n}\,dV_g,\qquad
f_n := \frac{\partial u}{\partial t}(t_n)\,e^{u_n/2}.
\end{equation}
With these notations, $u_n$ solves
\begin{equation}\label{Original.Eq}
\begin{aligned}
-\Delta_g u_n
&= 8\pi\,h_1 e^{u_1^n} - \rho_2\,h_2 e^{u_2^n} - (8\pi-\rho_2)\;-\; f_n\,e^{u_n/2}
\qquad \text{on }\Sigma.
\end{aligned}
\end{equation}
Moreover, the normalized functions $u_i^n$ and $f_n$ satisfy the identities
\begin{equation}\label{eq4.2}
\int_{\Sigma} h_1 e^{u_1^n}\,dV_g=1,\qquad 
\int_{\Sigma} h_2 e^{u_2^n}\,dV_g=1,\qquad 
\|f_n\|_{L^2(\Sigma)}^2=\int_{\Sigma}\Bigl|\frac{\partial u}{\partial t}(t_n)\Bigr|^2 e^{u_n}\,dV_g \to 0.
\end{equation}
In particular, \eqref{eq4.2} shows that the time–derivative term is negligible as $n\to\infty$.

Passing to a subsequence if necessary, we may assume that
\[
8\pi\,h_1 e^{u_1^n}\rightharpoonup \mu_1,\qquad 
\rho_2\,h_2 e^{u_2^n}\rightharpoonup \mu_2 \quad\text{in the sense of measures on }\Sigma.
\]
Define the singular set
\begin{equation*}
S:=\{x\in \Sigma : \mu_1(\{x\})+\mu_2(\{x\})  \ge 4\pi\}.
\end{equation*}
Since $\mu_1(\Sigma)=8\pi$ and $\mu_2(\Sigma)=\rho_2$, the singular set $S$ is finite. 
In the class of stationary mean-field models (including the sinh-Gordon equation and the Toda systems), it is well-known that $S$ coincides with the set of blow-up points
\begin{equation*}
	S_1:=\left\{x\in\Sigma: \exists x_n\rightarrow x \text{ with } \max(u_1^n(x_n),u_2^n(x_n))  \rightarrow+\infty\right\}.
\end{equation*}

In our setting, the equation contains an additional time–derivative term. 
For the sake of completeness, we include a proof that the identification $S=S_1$ still holds in this case. 
We recall the Brezis–Merle estimate \cite{BrMe}, and also refer to \cite{DJLW} for the result on surfaces.

\begin{lemma}[Lemma 2.7 in \cite{DJLW}]\label{Brezis-Merle}
Let $\Omega\subset\Sigma$ be a smooth domain. Assume that $u$ is a solution to a Dirichlet problem
\begin{equation*}
-\Delta_gu=f \ \ \text{in}\ \Omega, \quad u=0\ \  \text{on}\ \partial\Omega,
\end{equation*}
where $f\in L^1(\Omega).$ For every $0<\delta<4\pi,$ there is a constant $C$ depending only on $\delta$ and $\Omega$ such that
\begin{equation*}
\int_{\Omega}\exp\Big(\frac{(4\pi-\delta)|u|}{\|f\|_{L^1(\Omega)}}\Big)dV_g \leq C.
\end{equation*}
\end{lemma}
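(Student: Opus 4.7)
The plan is to mimic the classical Brezis--Merle argument, which combines a Green's function representation with Jensen's inequality applied to the probability measure induced by $|f|$.

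First, I would represent $u$ via the Dirichlet Green's function $G^{\Omega}(x,y)$ of $-\Delta_g$ on $\Omega$:
\begin{equation*}
u(x) = \int_\Omega G^\Omega(x,y)\, f(y)\, dV_g(y).
\end{equation*}
Since $\Omega$ is a smooth, relatively compact domain in the Riemannian surface $(\Sigma,g)$, a standard parametrix construction (equivalently, passing to conformal/normal coordinates) yields the pointwise bound
\begin{equation*}
|G^\Omega(x,y)| \;\le\; -\tfrac{1}{2\pi}\log \dist_g(x,y) + C_\Omega,
\end{equation*}
uniformly for $x,y\in \Omega$, where $C_\Omega$ depends only on $\Omega$ and $g$.

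Next, I would write $|u(x)|\le \int_\Omega |G^\Omega(x,y)|\,|f(y)|\,dV_g(y)$ and apply Jensen's inequality to the convex function $\exp$ and the probability measure $d\mu(y) = |f(y)|\,dV_g(y)/\|f\|_{L^1(\Omega)}$:
\begin{equation*}
\exp\!\left(\frac{(4\pi-\delta)|u(x)|}{\|f\|_{L^1(\Omega)}}\right)
\;\le\; \int_\Omega \exp\!\left((4\pi-\delta)|G^\Omega(x,y)|\right) d\mu(y).
\end{equation*}
Integrating over $x\in\Omega$ and invoking Fubini reduces the claim to showing that $\sup_{y\in\Omega}\int_\Omega \exp((4\pi-\delta)|G^\Omega(x,y)|)\, dV_g(x)$ is finite. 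Plugging in the logarithmic bound gives, up to a multiplicative constant, the integral $\int_\Omega \dist_g(x,y)^{-2+\delta/(2\pi)}\, dV_g(x)$, whose exponent is strictly greater than $-2$ because $\delta>0$, and which is therefore uniformly bounded by a constant depending only on $\delta$, $\Omega$ and $g$.

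The only delicate point I expect is the uniform logarithmic bound on $|G^\Omega(x,y)|$ in the curved setting: on $\mathbb{R}^2$ this is immediate, but on $(\Sigma,g)$ one has to compare $G^\Omega$ locally with the flat Dirichlet Green's function in normal coordinates around each $y\in\Omega$ and control the remainder by the compactness of $\overline{\Omega}$. Once this ingredient is in place, the rest of the proof reduces to the convexity/Fubini calculation above, which is exactly Brezis--Merle's original argument.
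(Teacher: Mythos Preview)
The paper does not prove this lemma; it is quoted verbatim from \cite{DJLW} (with the original idea from \cite{BrMe}) and merely recalled for later use. Your proposal reproduces exactly the classical Brezis--Merle argument: Green's function representation, logarithmic upper bound $|G^\Omega(x,y)|\le -\tfrac{1}{2\pi}\log\dist_g(x,y)+C_\Omega$, Jensen's inequality with the probability measure $|f|/\|f\|_{L^1}$, and Fubini reducing to $\int_\Omega \dist_g(x,y)^{-2+\delta/(2\pi)}\,dV_g(x)<\infty$. This is the standard proof and is correct; the only point requiring care, as you already note, is the uniform logarithmic bound on $G^\Omega$ in the curved setting, which follows from a local comparison with the flat Green's function in conformal coordinates together with compactness of $\overline\Omega$.
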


As a consequence of Lemma \ref{Brezis-Merle}, we first prove that $u_n-\overline{u}_n$ is uniformly bounded on every compact subset of $\Sigma\setminus S$, and we then deduce that the singular set $S$ coincides with the blow-up set $S_1$.

\begin{lemma}\label{Bounded}
(1) For $x\notin S$, there exist a geodesic ball $B_R^g(x) \subset\Sigma\setminus S$ and a constant $C>0$ such that
\begin{equation}\label{bdd4.1}
\|u_n-\overline{u}_n\|_{L^{\infty}(B_R^g(x))}\leq C \quad \text{for all } n \in\N.
\end{equation}
(2) $S=S_1$.
\end{lemma}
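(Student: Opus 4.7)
The plan is to adapt the classical Brezis--Merle approach (Lemma~\ref{Brezis-Merle}) to the present parabolic setting. The new feature compared with the stationary case is the time–derivative term $f_n e^{u_n/2}$ appearing in \eqref{Original.Eq}; fortunately, combining \eqref{Vc}--\eqref{eq4.2} with the conservation identity $\int_\Sigma e^{u_n}\,dV_g = \int_\Sigma e^{u_0}\,dV_g =: M$ (Lemma~\ref{V-conserva}(i)), the Cauchy--Schwarz estimate
\[
\|f_n e^{u_n/2}\|_{L^1(\Sigma)} \le \|f_n\|_{L^2(\Sigma)}\Big(\int_\Sigma e^{u_n}\,dV_g\Big)^{1/2}\longrightarrow 0, \qquad \|f_n e^{u_n/2}\|_{L^2(\Sigma)}^2 = \int_\Sigma f_n^2 e^{u_n}\,dV_g \longrightarrow 0
\]
shows that this term is negligible in both $L^1$ and $L^2$, so it enters the Brezis--Merle decomposition only as a small correction.

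For \textbf{part (1)}, since $S$ is finite and $8\pi h_1 e^{u_1^n}\,dV_g$, $\rho_2 h_2 e^{u_2^n}\,dV_g$ converge weakly-$*$ to $\mu_1$, $\mu_2$, I can choose, for $x\notin S$, a radius $R>0$ with $\overline{B_{2R}^g(x)}\cap S=\emptyset$ and
\[
\int_{B_{2R}^g(x)} \bigl(8\pi h_1 e^{u_1^n} + \rho_2 h_2 e^{u_2^n}\bigr)\,dV_g \le 4\pi - \eta
\]
for some $\eta>0$ and all $n$ large. On $B_{2R}^g(x)$ I decompose $u_n = v_n + w_n$, where $v_n$ solves $-\Delta_g v_n = -\Delta_g u_n$ with $v_n=0$ on $\partial B_{2R}^g(x)$ and $w_n:= u_n-v_n$ is $g$-harmonic. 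Splitting the right-hand side of the $v_n$-equation into the two nonnegative pieces $8\pi h_1 e^{u_1^n}$ and $\rho_2 h_2 e^{u_2^n}$ (whose combined $L^1$-norm is below $4\pi$ by the $\eta$ margin), the bounded constant $-(8\pi-\rho_2)$, and the $L^1$-small correction $-f_n e^{u_n/2}$, I can apply Lemma~\ref{Brezis-Merle} to the first two pieces and standard $L^p$-regularity to the smooth residues, obtaining a uniform bound on $e^{p|v_n|}$ in $L^1(B_{2R}^g(x))$ for some $p>1$. The harmonic part $w_n$ is controlled locally by its average (mean value property plus interior gradient estimates); pinning the average against $\overline{u}_n$ via $\int_\Sigma e^{u_n}\,dV_g = M$ and the bounds of Lemma~\ref{Bounded-F} yields $\|u_n-\overline{u}_n\|_{L^\infty(B_R^g(x))}\le C$.

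For \textbf{part (2)}, the inclusion $S_1\subseteq S$ is the substantive direction and uses (1). For $x\notin S$, I write $u_1^n(y) = (u_n-\overline{u}_n)(y) + \overline{u}_n - \log\int_\Sigma h_1 e^{u_n}\,dV_g$, where the first term is bounded on $B_R^g(x)$ by~(1), $\overline{u}_n\le \log M$ by Jensen with $\int e^{u_n}=M$, and $\log\int_\Sigma h_1 e^{u_n}\,dV_g\ge \log c$ by Lemma~\ref{Bounded-F}; hence $u_1^n\le C$ on $B_R^g(x)$. For $u_2^n$, the key extra input is the lower bound
\[
\int_\Sigma h_2 e^{-u_n}\,dV_g \;\ge\; \frac{1}{C}\int_\Sigma e^{-u_n}\,dV_g \;\ge\; \frac{1}{C}\,e^{-\overline{u}_n},
\]
obtained from the ratio bound of Lemma~\ref{Bounded-F} together with Jensen applied to $\int e^{-u_n}$. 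Writing $u_2^n = -(u_n-\overline{u}_n)-\overline{u}_n-\log\int_\Sigma h_2 e^{-u_n}\,dV_g$, this lower bound exactly cancels the potentially unbounded $-\overline{u}_n$ term and yields $u_2^n\le -(u_n-\overline{u}_n)+C$, again bounded on $B_R^g(x)$. Hence $x\notin S_1$. The reverse inclusion $S\subseteq S_1$ follows by contraposition: if $x\in S\setminus S_1$, then both $u_1^n$ and $u_2^n$ are uniformly bounded above on a neighborhood of $x$, so the densities $h_1 e^{u_1^n},\,h_2 e^{u_2^n}$ are uniformly bounded there, excluding any atom of $\mu_1$ or $\mu_2$ at $x$.

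\textbf{The main obstacle} is the upper bound on $u_2^n$ in $S_1\subseteq S$: the naive estimate using only $\int h_2 e^{-u_n}\ge c$ is insufficient when $\overline{u}_n\to -\infty$, and the resolution relies crucially on the sharper ratio bound of Lemma~\ref{Bounded-F}, whose specific role is to track $\int h_2 e^{-u_n}$ in terms of $\overline{u}_n$ via Jensen. A secondary technical point is the proper handling of the time-derivative term in the Brezis--Merle decomposition, which is dispatched by its $L^1$- and $L^2$-smallness displayed at the beginning.
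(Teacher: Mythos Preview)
Your strategy matches the paper's (Brezis--Merle on a small ball, zero-boundary Dirichlet piece plus harmonic remainder, then the Jensen/ratio bounds from Lemma~\ref{Bounded-F} for part~(2)), but one displayed claim is wrong and your splitting relies on it. You assert $\|f_n e^{u_n/2}\|_{L^2(\Sigma)}^2=\int_\Sigma f_n^2 e^{u_n}\,dV_g\to 0$; since $f_n=\partial_t u(t_n)\,e^{u_n/2}$, this integral equals $\int_\Sigma|\partial_t u(t_n)|^2 e^{2u_n}\,dV_g$, which is \emph{not} controlled by~\eqref{Vc}--\eqref{eq4.2}. Globally you only have $\|f_n e^{u_n/2}\|_{L^1}\to 0$, so treating the $f_n e^{u_n/2}$ residue by ``standard $L^p$-regularity'' \emph{before} Brezis--Merle has no input. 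The paper instead applies Lemma~\ref{Brezis-Merle} to the \emph{entire} right-hand side of~\eqref{Original.Eq} at once (on a small enough ball the constant term and the $L^1$-small $f_n e^{u_n/2}$ both fit under the $4\pi-2\delta$ budget), obtains $e^{|v_n|}\in L^p$, combines with the harmonic part and $\overline{u}_n\le\log M$ to get $e^{u_n}\in L^p_{\mathrm{loc}}$, and only \emph{then} bootstraps $f_n e^{u_n/2}\in L^s_{\mathrm{loc}}$ for some $s>1$ via H\"older against $\|f_n\|_{L^2}$.

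A second loose end: ``pinning the average against $\overline{u}_n$ via $\int e^{u_n}=M$'' does not by itself bound the harmonic part. What is actually used (implicitly in the paper) is the mean-value inequality $\|w_n-\overline{u}_n\|_{L^\infty(B_R)}\le C\|w_n-\overline{u}_n\|_{L^1(B_{2R})}\le C\bigl(\|u_n-\overline{u}_n\|_{L^1(\Sigma)}+\|v_n\|_{L^1}\bigr)$, together with the global estimate $\|u_n-\overline{u}_n\|_{L^1(\Sigma)}\le C$ coming from $\|\Delta_g u_n\|_{L^1(\Sigma)}\le C$ and Green-function/potential estimates; the mass conservation enters only in the separate step $\overline{u}_n\le\log M$ feeding into $e^{u_n}\in L^p_{\mathrm{loc}}$.
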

\begin{proof}
(1) By Lemma \ref{V-conserva} (i) and \eqref{eq4.2}, we have $\|f_n e^{u_n/2}\|_{L^1(\Sigma)} \le \|e^{u_n/2}\|_{L^2(\Sigma)} \|f_n\|_{L^2(\Sigma)}=\|f_n\| _{L^2(\Sigma)}\cdot\big(\int_{\Sigma}e^{u_0}dV_g\big)^{\frac{1}{2}}\to 0$ as $n \to\infty$. 
Fix $x\notin S$ and choose $R>0$ so small that $\overline{B^g_{4R}(x)}\subset \Sigma\setminus S$.
Then, by the definition of $S$, there exists some $\delta\in(0,4\pi)$ such that for sufficiently large $n$
\begin{equation*}\label{bdd4.4}
	\int_{B^g_{4R}(x)}\Big(\big|f_n e^{\frac12 u_n}\big|+8\pi h_1 e^{u_1^n}+\rho_2 h_2 e^{u_2^n}+8\pi-\rho_2\Big) dV_g<4\pi-2\delta.
\end{equation*}
       Let $\zeta_n$  be the solution of a Dirichlet problem
\begin{equation*}\label{bdd4.2}
\Delta_g \zeta_n=f_n e^{\frac12 u_n}-8\pi h_1e^{u_1^n}+\rho_2 h_2e^{u_2^n}+8\pi-\rho_2 \ \  \text{ in } B_{4R}^g(x), \quad \zeta_n=0 \ \ \text{ on } \partial B_{4R}^g(x).
\end{equation*}
Applying Lemma \ref{Brezis-Merle} to $\zeta_n$, we obtain that
\begin{equation}\label{bdd-avu-2}
	\|\zeta_n\|_{L^p(B^g_{4R}(x))}\leq\|e^{|\zeta_n|}\|_{L^p(B^g_{4R}(x))} \leq C, \quad \text{ for } p=\frac{4\pi-\delta}{4\pi-2\delta}>1
\end{equation}
with $C$ independent of $n$.

Set $\eta_n=u_n-\overline{u}_n-\zeta_n.$ Then $\eta_n$ is a harmonic function in $B^g_{4R}(x)$, so we have
\begin{equation}\label{bdd4.6}
\begin{aligned}
\|\eta_n\|_{L^{\infty}(B^g_{2R}(x))}&\leq C\|\eta_n\|_{L^1(B^g_{4R}(x))} \leq C\big(\|u_n-\overline{u}_n\|_{L^1(\Sigma)}+\|\zeta_n\|_{L^1(B^g_{4R}(x))}\big) \le C
\end{aligned}
\end{equation}
From Lemma \ref{V-conserva} (i), applying Jensen's inequality, we obtain $\overline{u}_n\leq e^{\overline{u}_n}\leq\int_{\Sigma}e^{u_n}dV_g=\int_{\Sigma}e^{u_0}dV_g\leq C$.
Combining this with \eqref{bdd-avu-2} and \eqref{bdd4.6} yields $ \|e^{u_n}\|_{L^p(B^g_{2R}(x))} \le C\|e^{|\zeta_n|}\|_{L^p(B^g_{2R}(x))}\leq C$.

Setting $s={\frac{2p}{p+1}}>1$, by H\"older's inequality, we obtain
\begin{equation*}\label{bdd4.7}
\begin{aligned}
\int_{B^g_{2R}(x)}|f_n e^{\frac12 u_n}|^sdV_g&\leq\Big(\int_{B^g_{2R}(x)}|f_n|^2dV_g\Big)^{\frac{s}{2}}\Big(\int_{B^g_{2R}(x)}e^{pu_n}dV_g\Big)^{1-{\frac{s}{2}}}\\
&\leq C\Big(\int_{B^g_{2R}(x)}|f_n|^2dV_g\Big)^{\frac{s}{2}}\rightarrow0,\ \ \text{as}\ n\rightarrow+\infty.
\end{aligned}
\end{equation*}
Applying $L^p$-estimates (see \cite[Theorem 9.11]{G-T}) and the Sobolev embedding, we deduce that $\{\zeta_n\}_{n\in\N}$ is bounded in $W^{2,s}(B^g_{R}(x))$ and $L^\infty(B_R^g(x))$. Thus, $u_n-\overline{u}_n=\zeta_n+\eta_n$ is bounded in $L^\infty(B_R^g(x))$.

\medskip

(2) 
First, we prove that $S\subset S_1$. If $x_1\notin S_1$, then there exist $R_1>0$  and $C>0$ such that $B^g_{R_1}(x_1)\subset\Sigma\backslash S_1$ and $\max\limits_{x\in B^g_{R_1}(x_1)}\{e^{u^n_1},\,e^{u^n_2}\}\leq C$. 
For any $0<r<R_1$,
\begin{equation*}
	8\pi\int_{B^g_r(x_1)}h_1e^{u^n_1}dV_g+\rho_2\int_{B^g_r(x_1)}h_2e^{u^n_2}dV_g\leq Cr^2\to0,\quad \text{as}\ r\to0,	
\end{equation*}
which implies that $x_1\notin S$.

Next, it suffices to show that $S_1\subset S$. Suppose $x_0 \notin S$. 
By Jensen's inequality and Lemma \ref{Bounded-F}, we have
\begin{equation}\label{bdd-avu-4}			 
    \overline{u}^n_1\leq e^{\overline{u}^n_1}\leq\int_{\Sigma}e^{u^n_1}dV_g={\frac{\int_{\Sigma}e^{u_0}dV_g}{\int_{\Sigma}h_1e^{u_n}dV_g}}\leq C,\quad \overline{u}^n_2\leq e^{\overline{u}^n_2}\leq\int_{\Sigma}e^{u^n_2}dV_g={\frac{\int_{\Sigma}e^{-u_n}dV_g}{\int_{\Sigma}h_2e^{-u_n}dV_g}}\leq C.
\end{equation}
Then, by \eqref{bdd4.1} and \eqref{bdd-avu-4}, it follows that for any $x \in B_R^g(x_0) \subset \Sigma\setminus S$, 
\begin{equation*}
	u_i^n(x)\leq e^{u_i^n} \leq \exp\Big(\|u_i^n -\overline{u}_i^n\|_{L^{\infty}(B_R^g(x_0))}+\overline{u}_i^n\Big) \leq C,\quad i=1,2.
\end{equation*}
Thus, $x_0\notin S_1$. We conclude $S=S_1$.
\end{proof}

\subsection{Asymptotic behavior of a blow-up sequence $(u^n_1,\,u^n_2)$}

We now study the asymptotic behavior of a blow-up sequence $(u_1^n,u_2^n)$ arising from \eqref{Def-u1u2}.
For $i=1,2$, let $x_i^n\in\Sigma$ be a maximum point of $u_i^n$, and set
\begin{equation}\label{defxn}
c_i^n:=\max_{x\in\Sigma} u_i^n(x)=u_i^n(x_i^n), 
\qquad r_i^n:=e^{-c_i^n/2}.
\end{equation}

Our analysis proceeds in two steps. 
First (Proposition~\ref{Selection}), we show that blow-up is concentrated at a single point and obtain global pointwise control in terms of the distance to this point.
Second (Proposition~\ref{Never-S}), we prove a uniform upper bound for the second component $u_2^n$ on $\Sigma$.
We begin with the first step:

\begin{prop}\label{Selection}
Let $(u^n_1,u^n_2)$ be a blow-up sequence. Then, up to a subsequence, the following hold:
\begin{itemize}
\item[(1)] $r_2^n/r_1^n \to \infty$, $c_1^n \to \infty$ as $n\to\infty$ and $h_1(x_0)>0$ where $x_0=\lim\limits_{n\to\infty} x_1^n$;
\item[(2)] There exists $C_1$, independent of $n$, such that
\begin{equation}\label{selection1}
u^n_1(x)+2\log\dist_g(x,x^n_1)\leq C_1,\ u^n_2(x)+2\log\dist_g(x,x^n_1)\leq C_1,\ \forall x\in\Sigma.
\end{equation}
In particular, $x_0$ is the unique blow-up point, in other words, $S=\{x_0\}$.
\end{itemize}
\end{prop}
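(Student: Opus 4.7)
The plan is to exploit the nonlinear duality between the two normalized components together with mass quantization, and then to extract a pointwise decay from a bubble-type rescaling. First, I would observe that Lemma~\ref{Bounded-F} yields, pointwise and uniformly in $n$,
\begin{equation*}
u_1^n(x)+u_2^n(x)=-\log\Bigl(\int_\Sigma h_1 e^{u_n}dV_g\cdot\int_\Sigma h_2 e^{-u_n}dV_g\Bigr)=O(1).
\end{equation*}
In particular, the blow-up sets of $u_1^n$ and $u_2^n$ are disjoint: at any point where one component diverges to $+\infty$, the other tends to $-\infty$.

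Next I would rule out blow-up of $u_2^n$ by a mass-quantization argument, adapting the selection process of \cite{JWY,LWZ}. In a small neighborhood of a putative blow-up point $y_0$ of $u_2^n$ one has $h_1 e^{u_1^n}\to 0$ locally, and $u_n=-u_2^n+O(1)\to-\infty$ makes the parabolic remainder $f_n e^{u_n/2}$ negligible thanks to $\|f_n\|_{L^2(\Sigma)}\to 0$. The equation for $u_2^n$ then reduces asymptotically to a Liouville-type equation with integrated mass $\rho_2<8\pi$, while the standard Brezis--Merle / Li--Shafrir quantization requires every blow-up atom to carry at least $8\pi$ (the degenerate case $h_2(y_0)=0$ being excluded by the compactness-concentration argument of \cite{YangZhu}). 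This contradiction shows that $c_2^n$ stays bounded, which is exactly where the assumption $\rho_2<8\pi$ enters crucially. Consequently, every point of $S=S_1$ must come from the $u_1^n$ component, and hence $c_1^n\to\infty$.

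Now I would locate and count the blow-up points of $u_1^n$. Passing to a subsequence, $x_1^n\to x_0\in S$; since $\mu_1(\Sigma)=8\pi$ and each blow-up atom of $u_1^n$ carries at least $8\pi$, $x_0$ is the unique blow-up point. To exclude $h_1(x_0)=0$, I rescale $v_n(y):=u_1^n(\exp_{x_1^n}(r_1^n y))-c_1^n$ in geodesic normal coordinates; using the flow equation one obtains
\begin{equation*}
-\Delta v_n(y)=8\pi\, h_1(x_1^n+r_1^n y)e^{v_n(y)}+o(1)\qquad\text{on compacts of }\R^2.
\end{equation*}
If $h_1(x_1^n)\to 0$, then elliptic estimates force $v_n\to v$ harmonic in $\R^2$ with $v\le 0$ and $v(0)=0$, so $v\equiv 0$ by Liouville; a direct computation then shows that the local mass of $h_1 e^{u_1^n}$ around $x_1^n$ tends to zero, contradicting the $8\pi$-concentration of $\mu_1$ at $x_0$. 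Combining $h_1(x_0)>0$, $c_1^n\to\infty$ and $c_2^n\le C$ yields $r_2^n/r_1^n=e^{(c_1^n-c_2^n)/2}\to\infty$, completing part~(1).

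For the pointwise estimate (2), the bound for $u_2^n$ is immediate from $c_2^n\le C$ and the compactness of $\Sigma$. For $u_1^n$, I would combine the Chen--Li classification of the limit profile of $v_n$ (a standard Liouville bubble) with the Harnack-type control of Lemma~\ref{Bounded} away from $x_0$, patching the bubble region and the exterior by a standard scaling/comparison argument. The main obstacle I anticipate is the simultaneous handling of the non-negative weight $h_1$ and of the parabolic remainder $f_n e^{u_n/2}$ in the rescaled equation; a careful bookkeeping using $\|f_n\|_{L^2(\Sigma)}\to 0$ together with the local control of $e^{u_n}$ near $x_0$ keeps this remainder negligible without altering the elliptic picture.
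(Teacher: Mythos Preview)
Your overall strategy conflates Proposition~\ref{Selection} with Proposition~\ref{Never-S} and contains a genuine gap.

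First, a minor point: the identity $u_1^n+u_2^n=-\log\bigl(\int h_1 e^{u_n}\cdot\int h_2 e^{-u_n}\bigr)$ together with Lemma~\ref{Bounded-F} gives only an \emph{upper} bound, not $O(1)$: there is no upper bound on $\int_\Sigma h_2 e^{-u_n}$ in Lemma~\ref{Bounded-F}. The upper bound is enough for the disjointness of pointwise blow-up, so this is salvageable, but the statement as written is incorrect.

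The serious gap is your claim that $c_2^n$ stays bounded. Your justification, ``in a small neighborhood of a putative blow-up point $y_0$ of $u_2^n$ one has $h_1 e^{u_1^n}\to 0$ locally'', is precisely the point that fails. Disjointness of \emph{pointwise} blow-up tells you only that $u_1^n(y_0)\to-\infty$; it does not prevent $u_1^n$ from blowing up at a sequence of points $x_1^n\to y_0$ at a finer scale. In that scenario, rescaling at the $u_2^n$-scale does not make the $h_1 e^{u_1^n}$ term vanish: it converges to a Dirac mass, and the limiting equation for the rescaled $u_2^n$ is the \emph{singular} Liouville equation $-\Delta w_2=A^2\rho_2 h_2(x_0)e^{w_2}-8\pi\delta_{\tilde x_0}$ rather than the regular one. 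Your Brezis--Merle/Li--Shafrir quantization argument then no longer gives a direct contradiction with $\rho_2<8\pi$; one needs the Prajapat--Tarantello classification and a separate argument. The paper handles exactly this difficulty, but in the \emph{subsequent} Proposition~\ref{Never-S}, which relies on Proposition~\ref{Selection} already being established. Note also that Proposition~\ref{Selection} as stated asks only for $r_2^n/r_1^n\to\infty$ (i.e.\ $c_1^n-c_2^n\to\infty$), not for $c_2^n$ bounded; the paper proves this weaker statement by a direct rescaling at $x_2^n$ and scale $r_2^n$, without ever needing to know that $h_1 e^{u_1^n}$ is locally negligible.

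There are two further consequences. Your uniqueness argument for the blow-up point (``each atom carries at least $8\pi$'') is circular: the $8\pi$-quantization for $\mu_1$ at $x_0$ is precisely what the bubble analysis with $h_1(x_0)>0$ is meant to establish, and your exclusion of $h_1(x_0)=0$ via ``local mass tends to zero, contradicting the $8\pi$-concentration'' presupposes this concentration. The paper instead gets a contradiction from $\int_{\R^2}e^{v_1}=\infty$ against the integrability bound coming from Lemma~\ref{Bounded-F}. Finally, your treatment of part~(2) for $u_1^n$ (``patching by a standard scaling/comparison argument'') is too vague: the paper needs a genuine selection process (maximizing $u_i^n+2\log(d_n-|\cdot-q_n|)$ on shrinking balls) to produce a second bubble disjoint from the first and derive a mass contradiction with $\int_\Sigma h_1 e^{u_1^n}=1$; and your bound for $u_2^n$ rests on the unproven $c_2^n\le C$.
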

\begin{proof}
\textbf{Proof of (1)} We first prove that $r_2^n/r_1^n \to \infty$. Suppose, by contradiction, that $r_2^n \le C r_1^n$ for some $C>0$.
Taking a subsequence, we may assume that $x_2^n \to x_0 \in S$ as $n \to \infty$.
Choose an isothermal coordinate system near $x_0$, which satisfies $g=e^{\psi(x)}|dx|^2$ and $\psi(x_0)=0$.
Since $S$ is finite, we can fix $\tilde{r}>0$ so small that $x_0=0$ is the unique blow-up point in $B_{\tilde{r}}(0)$.
\par

Set, for $i=1,2$,
\begin{equation*}
w_i^n(x):=u_i^n(x_2^n+r_2^n x)+2\log r_2^n \qquad x\in B_{\tilde{r}/r_2^n}(0) \subset \R^2.
\end{equation*}
Then, $w_2^n$ satisfies, in $B_{\tilde{r}/r_2^n}(0) \subset \R^2$,
\begin{equation}\label{sp1}
\begin{aligned}
-\Delta w^n_2(x)=&\rho_2 (h_2 e^{\psi})(x^n_2+r^n_2x)e^{w^n_2}-8\pi (h_1e^{\psi})(x^n_2+r^n_2x)e^{w^n_1}+\big(8\pi-\rho_2\big) e^{\psi(x^n_2+r^n_2x)-c_2^n}\\
&+(f_n e^{\psi})(x^n_2+r^n_2x)e^{w_1^n(x)/2-c_2^n/2} \|h_1e^{u_n}\|_{L^1(\Sigma)}^{\frac{1}{2}}.
\end{aligned}
\end{equation}
where $\Delta$ denotes the Laplacian in the chosen coordinate.

By the definition of $w_i^n$ and $r_2^n$, we have $w_2^n(0)=0$. Moreover, by \eqref{eq4.2}, there exists $C'>0$ such that $e^{w_1^n}, e^{w_2^n} \le C'$ on each fixed ball $B_R(0)$, and
\[ \|f_n(x^n_2+r^n_2x)e^{\psi(x^n_2+r^n_2x)-c_2^n/2}\|_{L^2(B_R(0))} \le \|f_n\|_{L^2(\Sigma)}\to 0 \quad \text{ for any } R>0.\]
By Harnack's type inequalities (see \cite[Theorems 9.20, 9.22]{G-T}) and $L^p$-estimates, $\{w^n_2\}_{n\in\N}$ is bounded in $H^2_{\rm{loc}}(\mathbb{R}^2)$.
Hence, up to a subsequence,
\begin{equation*}
w^n_2 \rightharpoonup w_2 \ \ \ \text{weakly in}\ H^2_{\rm{loc}}(\mathbb{R}^2),\quad w^n_2 \rightarrow w_2\ \ \ \text{in}\ C^{\alpha}_{\rm{loc}}(\mathbb{R}^2).
\end{equation*}

On the other hand, by Lemma \ref{Bounded-F}, we have $\|h_1 e^{u_n}\|_{L^1(\Sigma)}, \|h_2 e^{u_n}\|_{L^1(\Sigma)} \ge c$ for all $n \in \mathbb{N}$, and thus
\begin{equation}\label{sp2}
w^n_1(x)+w^n_2(x) \le 4\log r^n_2 - \log(\|h_1 e^{u_n}\|_{L^1(\Sigma)}\|h_2 e^{-u_n}\|_{L^1(\Sigma)}) \rightarrow -\infty,
\end{equation}
uniformly on compact subsets of $\mathbb{R}^2$.
In particular, $w^n_1 \rightarrow-\infty$ locally uniformly.

Taking the limit in \eqref{sp1} on each $B_R(0)$ (using $e^{\psi(x_2^n+r_2^n x)}\to 1$, $(h_i e^{\psi})(x_2^n+r_2^n x)\to h_i(x_0)$ uniformly, the vanishing of the terms with $e^{w_1^n}$ and $e^{-c_2^n}$, and the $L^2$-smallness of the $f_n$-term), we obtain
\begin{equation}\label{sp2.1}
-\Delta w_2=\rho_2\,h_2(x_0)\,e^{w_2}\quad\text{ in } \quad\mathbb{R}^2.
\end{equation}
\par

We distinguish two cases according to the value of $h_2(x_0)$.

\noindent\emph{Case (i)} If $h_2(x_0)=0,$ then $w_2$ is harmonic in $\mathbb{R}^2$. Since $e^{w_2}$ is subharmonic and $e^{w_2}(0)=1$, the mean-value inequality gives $\int_{B_R(0)} e^{w_2}\ge \pi R^2$ for all $R>0$, hence $\int_{\mathbb{R}^2} e^{w_2}=\infty$.
This contradicts Lemma \ref{Bounded-F} because
\begin{equation}\label{sp3}
\begin{aligned}
\int_{\R^2}e^{w_2}=\lim_{n\rightarrow\infty}\int_{B_{\tilde{r}/r_2^n}(0)}e^{\psi(x^n_2+r^n_2x)+w^n_2}\leq
\lim_{n\rightarrow\infty}\int_{\Sigma}e^{u^n_2}dV_g&=\lim_{n\rightarrow\infty}{\frac{\int_{\Sigma}e^{-u_n}dV_g}{\int_{\Sigma}h_2e^{-u_n}dV_g}} \le C.
\end{aligned}
\end{equation}

\noindent\emph{Case (ii)} If $h_2(x_0)>0,$ then from \eqref{sp2.1} we have
\begin{equation*}
\rho_2 h_2(x_0)\int_{B_R(0)}e^{w_2}=\rho_2 \lim_{n\rightarrow\infty}\int_{B_R(0)}(h_2e^{\psi})(x_2^n+r_2^n x)e^{w^n_2(x)}dx \le \lim_{n\rightarrow\infty}\int_\Sigma \rho_2 h_2e^{u^n_2}dV_g=\rho_2.
\end{equation*}
By the classification in \cite{ChenLi}, $\rho_2 h_2(x_0) \int_{\R^2} e^{w_2}=8\pi$, which contradicts the  above inequality for $R$ sufficiently large.
Thus, we have $r_2^n/r_1^n \to \infty$. Finally, $c_1^n \to \infty$ since $c_1^n \ge c_2^n$ and $c_1^n+c_2^n \to \infty$.

\medskip

Now we prove that $h_1(x_0)>0$ where $x_0=\lim\limits_{n\to\infty} x_1^n$. Work in an isothermal coordinate system near $x_0$ (we will use the notation $g=e^{\psi(x)}|dx|^2$ again) and define 
\begin{equation*}
v_i^n(x):=u_i^n(x_1^n+r_1^n x)+2\log r_1^n, \qquad x\in B_{\tilde{r}/r_1^n}(0) \subset \R^2.
\end{equation*}
Then $v_1^n(0)=0$, and arguing as before, we obtain the analogue of \eqref{sp1} for $v_1^n$, and $v^n_1 \to v_1$ weakly in $H^2_{\rm{loc}}(\mathbb{R}^2)$ and strongly in $C^{\alpha}_{\rm{loc}}(\mathbb{R}^2)$.
Moreover, as in \eqref{sp2} (with $r_2^n$ replaced by $r_1^n$), we have $v_2^n \to -\infty$ locally uniformly in $\R^2$. Taking the limit in the rescaled equation gives 
\begin{equation}\label{sp3.1}
-\Delta v_1=8\pi h_1(x_0)e^{v_1} \quad \text{in } \R^2.
\end{equation}
If $h_1(x_0)=0,$ then $v_1$ is harmonic in $\mathbb{R}^2$, and using the same argument as in \eqref{sp3} we derive a contradiction with Lemma \ref{Bounded-F}.
Thus, $h_1(x_0)>0$, and it follows from the classification result \cite{ChenLi} that
\begin{equation}\label{sp3.2}
v_1(x)=-2\log\big(1+\pi h_1(x_0)|x|^2\big).
\end{equation}

\vspace{0.3cm}

\noindent \textbf{Proof of (2)}  Now we prove \eqref{selection1}. 
We work in an isothermal coordinate system near $x_0=0 \in \R^2$ such that $x_0$ is the unique blow-up point in $B_{3\tilde{r}}(0)$ for some $\tilde{r}>0$.
By \eqref{sp3.2}, there exists $R_n \to \infty$ such that
\begin{equation*}
v^n_1(y)+2\log|y|\leq C, \quad \forall y \in B_{R_n}(0).
\end{equation*}
With a change of variables $x=x_1^n+r_1^n y$, we can find $l^n_1\rightarrow 0$ such that $l_1^n/r_1^n \to \infty$ and
\begin{equation}\label{sp4.1}
u^n_1(x)+2\log|x-x^n_1|\leq C,  \quad \forall x\in B_{l_1^n}(x_1^n).
\end{equation}

Assume by contradiction that
\begin{equation}\label{sp4.2}
\max\limits_{i=1,2, |x|\leq \tilde{r}}\big(u^n_i(x)+2\log|x-x^n_1| \big) \rightarrow \infty.
\end{equation}
Let $q_n \in \overline{B}_{\tilde{r}}(0)$ be a point where the above maximum is attained, and define, for $i=1,2$,
\begin{equation*}
d_n:={\tfrac{1}{2}}|q_n-x^n_1|, \qquad S^n_i(x):=u^n_i(x)+2\log\big(d_n-|x-q_n|\big)\ \ \text{in }\ B_{d_n}(q_n).
\end{equation*}
Then $S_i^n(x) \to -\infty$ as $x\to \partial B_{d_n}(q_n)$, while, as $n\to\infty$, it follows from \eqref{sp4.2} that 
\begin{equation*}
\max_{i=1,2} S^n_i(q_n) =\max_{i=1,2}\big(u_i^n(q_n)+2\log d_n \big) \ge \max_{i=1,2}\big(u_i^n(q_n)+2\log|q_n-x_1^n|\big)-2\log 2 \rightarrow\infty.
\end{equation*}
Let $p_n$ be the point where $\max\limits_{x\in\overline{B}_{d_n}(q_n)}\{S^n_1,S^n_2\}$ is attained.
We distinguish two cases comparing $S_1^n(p_n)$ and $S_2^n(p_n)$.

\medskip

\noindent\emph{Case (i)}
Assume $S^n_1(p_n) \ge S^n_2(p_n)$. Then
\begin{equation}\label{sp5}
u^n_1(p_n)+2\log\big(d_n-|p_n-q_n|\big)=S_1^n(p_n)\geq \max\{S^n_1(q_n),\ S^n_2(q_n)\}\rightarrow+\infty.
\end{equation}
Let $l_n={\tfrac{1}{2}}\big(d_n-|p_n-q_n|\big)$. For any $y \in B_{l_n}(p_n)$ and $i=1,2$,
\begin{equation}\label{sp6}
\begin{aligned}
&u^n_i(y)+2\log\big(d_n-|y-q_n|\big)\leq u^n_1(p_n)+2\log(2l_n),\\
&d_n-|y-q_n|\geq d_n-|p_n-q_n|-|y-p_n|\geq l_n,
\end{aligned}
\end{equation}
hence
\begin{equation*}
u^n_i(y)\leq u^n_1(p_n)+2\log2, \quad \text{for all } y\in B_{l_n}(p_n), \ \ i=1,2.
\end{equation*}

Define the rescaled functions $\tilde{u}^n_i(z):=u^n_i(p_n+r_nz)+2\log r_n$, $i=1,2$, where $r_n:=e^{-u^n_1(p_n)/2}.$
Then, by \eqref{sp5} and \eqref{sp6},
\begin{equation*}
\tilde{u}_i^n(z) \le 2\log2, \quad \text{for all } |z| \le l_n/r_n, \ i=1,2, \quad \text{ and } \quad r_n \to 0, \ \ l_n/r_n \to \infty.
\end{equation*}
Moreover, $\tilde{u}^n_1$ satisfies on $B_{l_n/r_n}(0)$ the rescaled equation analogous to \eqref{sp1}. By $L^p$-estimates and Sobolev embedding, we obtain that 
$\tilde{u}^n_1 \rightharpoonup \tilde{u}_1$ in $H^2_{loc}(\R^2)$ and $\tilde{u}^n_1 \to \tilde{u}_1$ in $C^\alpha_{loc}(\R^2)$.
Arguing as in \eqref{sp2}, we have $\tilde{u}^n_1+\tilde{u}^n_2 \to -\infty$ locally in $\R^2$, hence $\tilde{u}^n_2 \to -\infty$ locally uniformly in $\R^2$.
Taking the limit, we obtain
\begin{equation}\label{sp7}
-\Delta \tilde{u}_1=8\pi h_1(x_0){e^{\tilde{u}_1}}\ \ \text{ in }\ \mathbb{R}^2,
\end{equation}
since $u_1^n(p_n) \to \infty$ and $x_0=0$ is the unique blow-up point in $B_{\tilde{r}}(0)$.
Hence, by \eqref{sp7} and the classification result \cite{ChenLi}, 
\begin{equation*}
1=\beta_1\int_{\mathbb{R}^2}e^{\tilde{u}_1}dy=\lim_{n\rightarrow\infty}\int_{B_{l_n/ 2r_n}(0)} (h_1e^{\psi})(p_n+r_ny)e^{\tilde{u}^n_1(y)}dy = \lim_{n\rightarrow\infty}\int_{B_{l_n/2}(p_n)}h_1e^{u^n_1+\psi}dy.
\end{equation*}
\par

On the other hand, from \eqref{sp3.2} (the blow-up at $x_1^n$) we have
\begin{equation*}
1=h_1(x_0)\int_{\mathbb{R}^2}e^{v_1}dy=\lim_{n\rightarrow\infty}\int_{B_{l^n_1/2r^n_1}(0)}(h_1e^{\psi})(x^n_1+r^n_1y)e^{v^n_1(y)}dy = \lim_{n\rightarrow\infty}\int_{B_{l^n_1/2}(x^n_1)}h_1e^{u^n_1+\psi}dy,
\end{equation*}
where we used $l_1^n/2r_1^n \to \infty$.

Since $u_1^n(p_n)+2\log|p_n-x_1^n| \ge u_1^n(p_n)+2\log(2l_n)-C \to \infty$, the inequality \eqref{sp4.1} implies $p_n \notin B_{l_1^n(x_1^n)}$. 
Moreover, by the definition of $d_n, l_n$, we also have $x_1^n \notin B_{l_n}(p_n)$. Therefore, for large $n$, $B_{l^n_1/2}(x^n_1) \cap B_{l_n/2}(p_n) =\emptyset$, so combining the above integration identities derives a contradiction to the fact that $\int_\Sigma h_1 e^{u_1^n}dV_g=1$. Hence, Case (i) cannot occur.

\medskip

\noindent\emph{Case (ii)}
 Suppose that\ $S^n_2(p_n) \ge S^n_1(p_n)$.  Then
we define $\tilde{u}^n_i(y):=u^n_i(p_n+r_ny)+2\log r_n$, $i=1,2$, where $r_n:=e^{-u^n_2(p_n)/2}.$
As in Case (i), by $L^p$-estimates, we have that $\tilde{u}^n_2 \rightharpoonup \tilde{u}_2$ in $H^2_{loc}(\R^2)$, $\tilde{u}^n_2 \to \tilde{u}_2$ in $C^\alpha_{loc}(\R^2)$,
and $\tilde{u}_2$ satisfies
\begin{equation*}
-\Delta \tilde{u}_2=\rho_2\beta_2{e^{\tilde{u}_2}}\ \ \text{in}\ \mathbb{R}^2, \ \  \text{ where } \beta_2=\lim_{n\rightarrow\infty}h_2(p_n).
\end{equation*}
By the same argument used for \eqref{sp2.1}, we derive a contradiction. Hence, Case (ii) cannot occur.
\vspace{0.3cm}

Combining the above, we obtain \eqref{selection1} on $B_{\tilde r}(x_0)$. 
If there were another blow-up point $\tilde{x}_0 \ne x_0$, then the same blow-up analysis at $\tilde x_0$ yields, for every $\delta>0$,
\begin{equation*}
\int_{B_\delta^g(\tilde x_0)} h_1 e^{u_1^n}\,dV_g \to 1
\quad\text{or}\quad
\int_{B_\delta^g(\tilde x_0)} \rho_2 h_2 e^{u_2^n}\,dV_g \to 8\pi.
\end{equation*}
However, we already have $\int_{B_\delta^g(x_0)} h_1 e^{u_1^n}\,dV_g \to 1$, while 
$\int_{\Sigma} h_1 e^{u_1^n}\,dV_g=1$ and $\int_{\Sigma}\rho_2 h_2 e^{u_2^n}\,dV_g=\rho_2<8\pi$, a contradiction. 
Hence $x_0$ is the unique blow-up point.

Consequently, $u_1^n$ and $u_2^n$ are uniformly bounded above on $\Sigma\setminus B_{\tilde r}(x_0)$; This proves \eqref{selection1} and completes the proof of Proposition~\ref{Selection}.
\end{proof}

\begin{remark}\label{rmk3.5}
From the proof of Proposition \ref{Selection}, we also obtain a useful consequence:
In contrast to the blow-up behavior $u_1^n(x_1^n) \to \infty$, it holds that
\[\overline{u}_1^n \rightarrow-\infty, \ \ \text{ and } \ \ \mu_1=8\pi\delta_{x_0},\]
where $\delta_{x_0}$ is the Dirac measure concentrated at $x_0$.
Consequently, Lemma \ref{Bounded} implies that, for any compact subset $K\Subset\Sigma\setminus \{x_0\}$, $u_1^n \rightarrow-\infty$ uniformly on $K$.

In fact, using \eqref{sp3.1} and the classification result \cite{ChenLi} (see \eqref{sp3.2}) in the proof of Proposition \ref{Selection}, we further deduce that, for any $\delta>0$,
\begin{equation*}
8\pi=8\pi\int_{\R^2} h_1(x_0)e^{v_1}dx=8\pi\lim_{n\to\infty} \int_{B_\delta^g(x_0)}h_1 e^{u_1^n} dV_g =\mu_1(B_\delta^g(x_0)) \le \mu_1(\Sigma)=8\pi.
\end{equation*}
Hence $\mu_1(B_\delta^g(x_0))=8\pi$ for all sufficiently small $\delta$, which implies that $\mu_1=8\pi\delta_{x_0}$.

To see $\overline{u}_1^n \to -\infty$, fix $\delta>0$ so small that $h_1(x)\ge \epsilon>0$ on the annulus $A_\delta:=B_\delta^g(x_0) \setminus B_{\delta/2}^g(x_0)$.
By Lemma \ref{Bounded}, there exists $C_\delta>0$ such that
\begin{equation*}
e^{\overline{u}_1^n}\int_{A_\delta} h_1  dV_g \le C_{\delta}\int_{A_\delta} h_1 e^{u_1^n} dV_g \to 0 \quad  \text{as}\ n\to\infty.
\end{equation*}
Since $\int_{A_\delta} h_1 dV_g>0$, we conclude that $\overline{u}_1^n \to -\infty$. $\Box$
\end{remark}

Next, we focus on the second component $u_2^n$. Recall that $x_i^n$ is the maximum point of $u_i^n$, with $c_i^n = u_i^n(x_i^n)$, $r_i^n = e^{-c_i^n/2}$.
We work in an isothermal coordinate around the unique blow-up point $x_0=0$ (so $g=e^{\psi(x)}|dx|^2$ with $\psi(0)=0$), and set $s_n=|x_1^n-x_2^n|$.
Rescaling at the scale $s_n$ around $x_2^n$, define
\begin{equation*}
w_2^n(x):=u_2^n(x_2^n+s_n x)+2\log r_2^n, \quad
w_1^n(x):=u_1^n(x_2^n+s_n x)+2\log s_n,
\end{equation*}
Set also the unit vector $\tilde x_n:=(x_1^n-x_2^n)/s_n\in \partial B_1(0)$ (so $w_2^n(0)=0=\sup w_2^n$).
After passing to a subsequence if necessary, assume $\tilde x_n\to\tilde x_0\in \partial B_1(0)$.

\begin{prop}\label{Never-S}
The sequence $u^n_2$ is uniformly bounded above on $\Sigma$, i.e.   there exists $C>0$ such that $u_2^n(x)\le C$ for all $x\in\Sigma$ and all $n\in\mathbb{N}$.
\end{prop}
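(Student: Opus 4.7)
The plan is to argue by contradiction: assume $c_2^n := \max_\Sigma u_2^n \to \infty$, and perform a blow-up analysis for $u_2^n$ near $x_2^n$ at the scale $s_n = |x_1^n - x_2^n|$. Applying \eqref{selection1} at $x = x_2^n$ gives $c_2^n + 2\log s_n \le C_1$, hence $s_n \le C r_2^n$. Combined with Proposition~\ref{Selection}(1) (which provides $x_2^n \to x_0$ and $r_2^n/r_1^n \to \infty$), after extracting subsequences I assume $\tilde x_n \to \tilde x_0 \in \partial B_1(0)$ and $\mu := \lim(s_n/r_2^n)^2 \in [0, C^2]$.

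In isothermal coordinates near $x_0$ (with $\psi(x_0)=0$), the rescaled functions satisfy $w_2^n(0) = 0$, $w_2^n \le 0$, and
\begin{equation*}
-\Delta w_2^n = e^{\psi(x_2^n+s_nx)}\Bigl[-8\pi h_1(x_2^n+s_nx)\,e^{w_1^n} + (s_n/r_2^n)^2\rho_2\,h_2(x_2^n+s_nx)\,e^{w_2^n}\Bigr] + o(1),
\end{equation*}
where the error collects the $O(s_n^2)$ contributions and the vanishing $f_n$-term. The inequality \eqref{selection1} transfers to $w_1^n(x) \le C - 2\log|x-\tilde x_n|$, so $e^{w_1^n(x)} \le C/|x-\tilde x_n|^2$; moreover, by mass conservation and the blow-up analysis in the proof of Proposition~\ref{Selection} (cf. Remark~\ref{rmk3.5}), the measures $8\pi h_1 e^\psi e^{w_1^n}\,dx$ converge weakly to $8\pi\delta_{\tilde x_0}$. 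Applying standard $L^p$-estimates on compact subsets of $\R^2 \setminus \{\tilde x_0\}$, I extract a limit $w_2^n \to w_2$ in $C^{0,\alpha}_{loc}(\R^2 \setminus \{\tilde x_0\})$ with $w_2 \le 0$, $w_2(0)=0$, and distributionally
\begin{equation*}
-\Delta w_2 = V\,e^{w_2} - 8\pi\,\delta_{\tilde x_0}\quad\text{on } \R^2, \qquad V := \mu\rho_2\,h_2(x_0) \ge 0,
\end{equation*}
together with $\int_{\R^2} V e^{w_2} \le \rho_2 < 8\pi$ (by Fatou's lemma and the normalization $\int_\Sigma \rho_2 h_2 e^{u_2^n}\,dV_g = \rho_2$).

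If $V = 0$, then $h := w_2 - 4\log|x-\tilde x_0|$ is harmonic on $\R^2$. The constraint $w_2 \le 0$ makes $h$ bounded above outside $B_1(\tilde x_0)$, and harmonicity extends the bound to $\overline{B_1(\tilde x_0)}$. Liouville's theorem and the anchor $w_2(0) = 0$ force $w_2(x) = 4\log|x-\tilde x_0|$, which is positive on $|x-\tilde x_0| > 1$, contradicting $w_2 \le 0$. If $V > 0$, then $\int_{\R^2}e^{w_2} \le \rho_2/V < \infty$, and the classification of singular Liouville solutions (Chen--Li, Prajapat--Tarantello) applied to the equation above---viewed as $-\Delta w_2 = Ve^{w_2} - 4\pi\cdot 2\cdot \delta_{\tilde x_0}$ with singular exponent $\alpha = 2$---yields $V\int_{\R^2} e^{w_2} = 8\pi(1+\alpha) = 24\pi$, contradicting $V\int e^{w_2} \le \rho_2 < 8\pi$.

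The main obstacle is the case $\mu = 0$ (i.e. $s_n \ll r_2^n$), where the self-interaction term $(s_n/r_2^n)^2 \rho_2 h_2 e^{w_2^n}$ disappears in the limit and the limit equation is driven entirely by the negative Dirac source produced by the concentration of $u_1^n$. To justify the limit equation and to preserve the normalization $w_2(0) = 0$, one must secure uniform $W^{2,p}_{loc}$ (or at least $C^{0,\alpha}_{loc}$) control of $w_2^n$ on compact subsets of $\R^2 \setminus \{\tilde x_0\}$, relying solely on the sharp pointwise bound $e^{w_1^n(x)} \le C/|x-\tilde x_n|^2$ provided by Proposition~\ref{Selection} together with a Brezis--Merle-type argument.
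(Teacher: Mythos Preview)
Your overall strategy matches the paper's: contradiction, rescale around $x_2^n$ at scale $s_n=|x_1^n-x_2^n|$, pass to a limit equation of the form $-\Delta w_2=V e^{w_2}-8\pi\delta_{\tilde x_0}$, and dispose of the cases $V=0$ and $V>0$ separately. Your treatment of the two endgame cases is fine (your $V=0$ argument via Liouville for the harmonic part is a valid variant of the paper's strong-maximum-principle argument).

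There is, however, a genuine gap in the derivation of the limit equation: the weak convergence $8\pi h_1 e^{\psi} e^{w_1^n}\,dx \rightharpoonup 8\pi\,\delta_{\tilde x_0}$ is asserted but not proved. The pointwise bound $e^{w_1^n(x)}\le C/|x-\tilde x_n|^2$ from \eqref{selection1} only shows that $e^{w_1^n}$ is \emph{bounded} on compact subsets of $\mathbb{R}^2\setminus\{\tilde x_0\}$, not that it tends to $0$; and Remark~\ref{rmk3.5} is a statement at the scale of $\Sigma$, which does not transfer to the $s_n$-rescaling because the relevant test sets have diameter $O(s_n)\to 0$. Without $w_1^n\to-\infty$ locally uniformly away from $\tilde x_0$, the weak limit of $8\pi h_1 e^\psi e^{w_1^n}$ could carry a nonzero absolutely continuous part (or an atom of mass $<8\pi$), and then neither your Liouville argument in the case $V=0$ nor the Prajapat--Tarantello quantization in the case $V>0$ applies in the stated form.

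The missing ingredient, which the paper supplies, is the preliminary claim $s_n/r_1^n\to\infty$. This does \emph{not} follow from $s_n\le C r_2^n$ and $r_2^n/r_1^n\to\infty$ alone (precisely in your ``main obstacle'' case $\mu=0$, i.e.\ $s_n/r_2^n\to 0$, the ratio $s_n/r_1^n$ is undetermined). The paper proves it by observing that $x_2^n$ is the maximum of $u_2^n=-u_1^n+\text{const}$, hence the minimum of $v_1^n(x)=u_1^n(x_1^n+r_1^n x)+2\log r_1^n$; since $v_1^n\to v_1=-2\log(1+\pi h_1(x_0)|x|^2)$, which is radially strictly decreasing, this minimum cannot stay at a bounded location, so $(x_2^n-x_1^n)/r_1^n\to\infty$. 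Once $s_n/r_1^n\to\infty$ is in hand, the paper's Step~1 (a Harnack plus mass-comparison argument) upgrades the pointwise bound to $w_1^n\to-\infty$ locally uniformly on $\mathbb{R}^2\setminus\{\tilde x_0\}$, which is exactly what you need for the Dirac identification. Your proposed ``Brezis--Merle-type argument'' based only on the pointwise bound secures compactness of $w_2^n$ but not the identification of the limit; you should insert the $s_n/r_1^n\to\infty$ step and the local-uniform decay of $w_1^n$ before passing to the limit.
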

\begin{proof}
Suppose that $u_2^n$ also blows up, i.e. $r_2^n \to 0$ as $n\to\infty$.
Substituting $x=x^n_2$ into the inequality in \eqref{selection1} for $u^n_2$, we obtain $|x_1^n-x_2^n|/r_2^n\le C$ for some $C$ independent of $n$.
Taking a subsequence if necessary, we may assume $s_n/r^n_2 \to A \in [0,\infty)$ as $n \to \infty$.

Before analyzing the functions $w_i^n$, $i=1,2$, we claim that $s_n/r^n_1 \to \infty$ as $n \to \infty$.
To see this, suppose not. Then, up to a subsequence, $(x^n_2-x^n_1)/r^n_1 \to z_0 \in\mathbb{R}^2$.
As in the proof of Proposition \ref{Selection} (see \eqref{sp3.2}), we have 
\begin{equation*}
	v_1^n(x)=u^n_1(x^n_1+r^n_1x)+2\log r^n_1\rightarrow v_1(x)=-2\log\big(1+\pi h_1(x_0)|x|^2\big) \quad \text{ in } C^{\alpha}_{\rm{loc}}(\mathbb{R}^2).
\end{equation*} 
Since $v_1(x)$ is radially symmetric and strictly decreasing, $u_2^n(x_1^n+r_1^n x)=-v_1^n(x)+\text{const}$, the functions
$u^n_2(x^n_1+r^n_1x)$ cannot have a maximum at $x=(x^n_2-x^n_1)/r^n_1$ for large $n$.
This contradicts the definition of $x^n_2$. Hence, the claim holds.

\medskip

\noindent \textbf{Step 1. Asymptotic behavior of $w_1^n$.}
We claim that $w^n_1 \rightarrow-\infty$ locally uniformly in $\mathbb{R}^2 \setminus \{\tilde{x}_0\}$.

Suppose not. Then there exist $D,\delta>0$ such that $\max\limits_{x \in B_D(\tilde{x}_0)\setminus B_\delta(\tilde{x}_0)} w_1^n \ge c$ for all $n \in \mathbb{N}$. 
On the other hand, by the definition of $\tilde{x}_n=(x_1^n-x_2^n)/s_n$, we have $w_1^n(x) \le C-2\log| x-\tilde{x}_n|$.
Since $\tilde{x}_n \to \tilde{x}_0$, for large $n$, we have $\max\limits_{x \in B_{2D}(\tilde{x}_0)\setminus B_{\delta/2}(\tilde{x}_0)} w_1^n \le C.$
Hence, by Harnack type inequalities (\cite[Theorems 9.20, 9.22]{G-T}) applied on a compact subset $K \Subset B_D(\tilde{x}_0) \setminus B_{\delta}(\tilde{x}_0),$ there exists $C$ independent of $n$ such that $\min\limits_{x \in B_D(\tilde{x}_0)\setminus B_\delta(\tilde{x}_0)} w_1^n \ge -C$.
In particular, by the fact $h_1(x_0)>0$, there exists $\epsilon_0>0$ independent of $n$ such that
\begin{equation}\label{cs3.3.1*}
\int_{K} (h_1 e^{\psi})(x^n_2+s_n x)e^{w^n_1(x)}dx \geq 2\epsilon_0>0.
\end{equation}

Next, by the classification result (see \eqref{sp3.1}--\eqref{sp3.2}), there exists $R>0$ such that $1-\epsilon_0=h_1(x_0) \int_{B_R(0)} e^{v_1}dx$. 
By a change of variables, it follows that
\begin{equation}\label{cs3.3.2}
1-\epsilon_0=\lim_{n\rightarrow\infty}\int_{B_R (0)} (h_1 e^{\psi})(x^n_1+r^n_1x)e^{v^n_1(x)}dx\\
=\lim_{n\rightarrow\infty}\int_{B_{R r_1^n/s_n}(\tilde{x}_n)} (h_1 e^{\psi})(x^n_2+s_n x)e^{w^n_1(x)}dx.
\end{equation}
Since $s_n/r^n_1 \to \infty$ as $n \to \infty$, the sets $B_{Rr_1^n/s_n}(\tilde{x}_n)$ and $K$ are disjoint for large $n$. Combining \eqref{cs3.3.1*} and \eqref{cs3.3.2}, we then obtain that, for large $n$, 
\begin{equation}\label{cs3.3.3}
1=\int_{\Sigma} h_1e^{u_1^n}dV_g \ge \int_{B_{Rr_1^n/s_n}(\tilde{x}_n)\cup K} (h_1 e^{\psi})(x^n_2+s_n x)e^{w^n_1(x)}dx \ge 1+\epsilon_0>1.
\end{equation}
It is a contradiction. This completes the proof of the claim.

\medskip

\noindent \textbf{Step 2. Analysis of the PDE for $w_2^n$.} 
First, by the definition, $w_2^n(x) \le 0$ and $w_2^n(0)=0$. 
We also note that, in the isothermal coordinate around $x_0=0$, the rescaled function $w_2^n$ satisfies, in $B_{\tilde{r}/s_n}(0)\subset\mathbb{R}^2$
\begin{equation}\label{cs3.3}
\begin{aligned}
-\Delta w^n_2(x)=&\rho_2(h_2e^{\psi})(x^n_2+s_n x)e^{w^n_2(x)}\Big(\frac{s_n}{r^n_2}\Big)^2-8\pi (h_1e^{\psi})(x^n_2+s_n x)e^{w^n_1(x)}\\
&+(8\pi-\rho_2)e^{\psi(x^n_2+s_n x)}s_n^2+(f_n e^{\psi})(x^n_2+s_n x) e^{w_1^n(x)/2}s_n \|h_1e^{u_n}\|_{L^1(\Sigma)}^{\frac12}.
\end{aligned}
\end{equation}

Fix an annulus $A_{R,\delta}:=B_R(\tilde{x}_0)\setminus B_\delta(\tilde{x}_0)$ with $0<\delta<R<\infty$. 
By Step 1, $w_1^n\to -\infty$ locally uniformly on $\mathbb{R}^2\setminus\{\tilde x_0\}$; hence $(h_1 e^{\psi})(x_2^n+s_n x)\, e^{w_1^n(x)} \to 0$ uniformly on $A_{R,\delta}$.
Since $\|f_n\|_{L^2(\Sigma)} \to 0$ and $\int_\Sigma e^{u_n}=\int_\Sigma e^{u_0}$, we also have
\begin{equation*}\label{cs3.3.3*}
	\int_{B_R(\tilde{x}_0)\backslash B_{\delta}(\tilde{x}_0)}\Big| (f_n e^{\psi})(x^n_2+s_n x) e^{\frac12 w^n_1(x)} s_n \|h_1e^{u_n}\|_{L^1(\Sigma)}^{\frac12}\Big|^2
    \rightarrow 0.
\end{equation*}
Moreover, by $s_n/r_2^n \to A \in [0,\infty$), the remaining terms of the right-hand side of \eqref{cs3.3} are uniformly bounded in $L^2(A_{R,\delta})$.

By the Harnack type inequality and $L^p$-estimate, the sequence $\{w_2^n\}$ is bounded in $H^2_{loc}(\R^2 \setminus \{\tilde{x}_0\})$. After passing to a subsequence, we see that $w_2^n \rightharpoonup w_2$ weakly in $H^2_{loc}(\R^2 \setminus\{\tilde{x}_0\})$ and strongly in $C^\alpha_{loc}(\R^2 \setminus\{\tilde{x}_0\})$ for some $\alpha \in (0,1)$, where $w_2$ satisfies
\begin{equation*}
\Delta w_2 + A^2 \rho_2 h_2(x_0)e^{w_2}=0 \ \ \text{ in } \R^2 \setminus \{\tilde{x}_0\}.
\end{equation*}
Applying the $L^p$-estimates again, we obtain that $w_2^n \to w_2$ strongly in $H^2_{loc}(\R^2 \setminus\{\tilde{x}_0\})$.

\medskip 

It now remains to show that $w_2$ satisfies the equation in $B_1(\tilde{x}_0) \subset \R^2$. 
To see this, we decompose $w_2^n:=w_r^n+w_s^n$, where the singular part $w_s^n$ solves the Dirichlet problem
\begin{equation*}
\begin{cases}
\Delta w_s^n  =8\pi (h_1 e^{\psi})(x^n_2+s_n x)e^{w^n_1}-(f_n e^{\psi})(x^n_2+s_n x) e^{w_1^n/2} s_n \|h_1e^{u_n}\|_{L^1(\Sigma)}^{\frac12} \ &\text{ in } B_1(\tilde{x}_0),\\
 w_s^n =0 \ &\text{ on } \partial B_1(\tilde{x}_0),
\end{cases}
\end{equation*}

By Cauchy-Schwarz inequality and \eqref{eq4.2}, we have that
\begin{equation*}
\int_{B_1 (\tilde{x}_0)} f_n(x^n_2+s_n x)s_n e^{w_1^n(x)/2}dx \leq C \|f_n\|_{L^2(\Sigma)}(\int_{\Sigma}e^{u_0}dV_g)^{\frac12}\rightarrow 0.
\end{equation*}
Moreover, by Step 1, we have
\begin{equation*}
h_1(x_2^n+s_n x)e^{w^n_1(x)} \rightharpoonup \delta_{\tilde{x}_0},
\end{equation*}
in the sense of measures.

By potential estimates (e.g. \cite[Lemma 7.12]{G-T}), the sequence $\{w_s^n\}_{n\in\N}$ is bounded in $W^{1,p}(B_1(\tilde{x}_0))$ for any $1<p<2$.  Hence, taking to a subsequence, we may assume that $w_s^n \to w_s$ weakly in $W^{1,p}_0(B_1(\tilde{x}_0))$, and 
\begin{equation*}
\Delta w_s = 8\pi\delta_{\tilde{x}_0} \ \ \text{ in } \R^2
\end{equation*}
in the sense of distributions. Consequently, $w_s(x)=4\log|x-\tilde{x}_0|$.
					
\medskip
					
Now we characterize the regular part $w^n_r$, which satisfies the following Dirichlet problem:
\begin{equation*}\begin{cases}
\Delta w_r^n =-\rho_2(h_2e^{\psi})(x^n_2+s_n x)e^{w^n_2}(\frac{s_n}{r^n_2})^2-(8\pi-\rho_2)e^{\psi(x^n_2+s_n x)}s_n^2  \ &\text{ in } B_1(\tilde{x}_0),\\
w_r^n=w_2^n \ &\text{ on } \partial B_1(\tilde{x}_0).
\end{cases}
\end{equation*}                    
Since $w_2^n\le 0$ and $w_2^n \to w_2$ in $H^2_{loc}(\R^2 \setminus\{\tilde{x}_0\})$ near $\partial B_1(\tilde{x}_0)$, applying standard ellpitic estimates, we obtain that $w_r^n \to w_r$ in $H^2(B_1(\tilde{x}_0))$ and $w_r$ satisfies
\begin{equation*}
\Delta w_r + A^2 \rho_2 h_2(x_0) e^{w_2}=0 \ \text{ in } B_1(\tilde{x}_0), \quad w_r=w_2 \ \ \text{ on } \partial B_1(\tilde{x}_0).
\end{equation*}
Thus, $w_2=w_r+4\log|\cdot-\tilde{x}_0|$ satisfies the following equation in distribution sense
\begin{equation}\label{cs3.2.5}
\Delta w_2+A^2\rho_2h_2(x_0)e^{w_2}=8\pi\delta_{\tilde{x}_0}\ \ \text{in}\ \ \mathbb{R}^2.
\end{equation}

We distinguish two cases according to the limiting coefficient:
\medskip

\noindent \emph{Case (i)} When $A^2 h_2(x_0)=0$.
Then \eqref{cs3.2.5} reduces to $\Delta w_2= 8\pi\,\delta_{\tilde x_0}$, hence $w_2$ is harmonic on $\mathbb{R}^2\setminus\{\tilde x_0\}$.
By construction $w_2\le 0$ and $w_2(0)=\sup_{\mathbb{R}^2\setminus\{\tilde x_0\}} w_2=0$; hence by the strong maximum principle  $w_2\equiv 0$ on $\mathbb{R}^2\setminus\{\tilde x_0\}$, which contradicts $\Delta w_2=8\pi\,\delta_{\tilde x_0}$.

\medskip

\noindent \emph{Case (ii)} When $A^2 h_2(x_0)>0$. We rewrite \eqref{cs3.2.5} into
\begin{equation*}\label{cs3.2.10}
\left\{\begin{aligned}
&-\Delta w_r(x)=A^2 \rho_2h_2(x_0)|x-\tilde{x}_0|^4 e^{w_r(x)} \quad \text{ in } \R^2,\\
&\int_{\mathbb{R}^2} |x-\tilde x_0|^{4} e^{w_r}
=\int_{\mathbb{R}^2} e^{w_2} \le\lim\limits_{n\to\infty}(\frac{r^n_2}{s_n})^2\int_{\Sigma}e^{u^n_2}dV_g={\frac{1}{A^2}} \lim\limits_{n\to\infty}{\frac{\int_{\Sigma}e^{-u_n}dV_g}{\int_{\Sigma}h_2e^{-u_n}dV_g}} <\infty.	
\end{aligned}\right.
\end{equation*}                    
By the classification result in \cite{PT}, it holds that $A^2 \rho_2h_2(x_0)\int_{\mathbb{R}^2}|x-\tilde{x}_0|^{4}e^{w_r}=24\pi$. Therefore, we can choose $R \gg 1$ and $0 <\delta \ll 1$ such that $A^2 \rho_2h_2(x_0)\int_{B_R(0) \setminus B_\delta(\tilde{x}_0)}|x-\tilde{x}_0|^{4}e^{w_r}>8\pi$. 
 However, this leads to a contradiction:
\begin{equation*}\label{cs3.2.11}
\begin{aligned}
8\pi<\rho_2 \lim_{n\to\infty}\frac{s_n^2}{(r_2^n)^2} \int_{B_R(0) \setminus B_\delta(\tilde{x}_0)}(h_2e^\psi)(x_2^n+s_n x)e^{w_2^n(x)} 
\leq\rho_2\int_{\Sigma}h_2e^{u^n_2}dV_g=\rho_2<8\pi.
\end{aligned}
\end{equation*}

\medskip
Both cases lead to contradictions, hence $u_2^n$ is uniformly bounded from above on $\Sigma$. 
This completes the proof of Proposition \ref{Never-S}.
\end{proof}

Now we can describe the global weak limit of the sequence $u_n-\overline{u}_n$. 
The next proposition identifies the limiting profile as a Green function plus a smooth correction, which will be the key input for the lower bound.

\begin{prop}\label{WeakConver}
For any $1<p<2$,  $u_n-\overline{u}_n+w_n$ converges to $G_{x_0}$ weakly in $W^{1,p}(\Sigma)$ and strongly in $W^{2,2}_{loc}\big(\Sigma\setminus\{x_0\}\big)$, where $G_{x_0}$ is the Green function in \eqref{green} with $p=x_0$,
and $w_n$ is the solution of the following equation
\begin{equation}\label{wc4.1*}
-\Delta_gw_n=\rho_2\Big({\frac{h_2e^{-u_n}}{\int_{\Sigma}h_2e^{-u_n}dV_g}}-1\Big) \ \  \text{ on } \ \  \Sigma, \quad \int_{\Sigma}w_ndV_g=0.
\end{equation}
In addition, up to a subsequence, $w_n\rightarrow w_{x_0}$ in $C^{1,\alpha}(\Sigma)$ for some constant $0<\alpha<1,$ where $w_{x_0}$ satisfies the singular mean field equation \eqref{smfeq} with $p=x_0$.
\end{prop}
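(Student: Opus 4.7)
The plan is to reduce the proposition to a Poisson equation whose source concentrates at $x_0$. Set $\tilde u_n := u_n - \overline{u}_n + w_n$; subtracting \eqref{wc4.1*} from \eqref{Original.Eq} gives
$$-\Delta_g \tilde u_n = 8\pi\, h_1 e^{u_1^n} - 8\pi - f_n e^{u_n/2},\qquad \int_\Sigma \tilde u_n\,dV_g = 0.$$
By Remark \ref{rmk3.5}, $8\pi h_1 e^{u_1^n}\rightharpoonup 8\pi\,\delta_{x_0}$ in the sense of measures, while Cauchy--Schwarz combined with \eqref{eq4.2} and Lemma \ref{V-conserva}(i) yields $\|f_n e^{u_n/2}\|_{L^1(\Sigma)}\le \|f_n\|_{L^2(\Sigma)}(\int_\Sigma e^{u_0}\,dV_g)^{1/2}\to 0$. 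Hence the right-hand side has uniformly bounded total variation and converges to $8\pi\,\delta_{x_0}-8\pi$ as Radon measures.

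I would then invoke Stampacchia-type estimates (in the spirit of Lemma \ref{Brezis-Merle}) for Poisson equations with bounded measure data on surfaces to deduce $\|\tilde u_n\|_{W^{1,p}(\Sigma)}\le C_p$ for every $p\in(1,2)$. Along a subsequence, $\tilde u_n\rightharpoonup \tilde u$ weakly in $W^{1,p}(\Sigma)$ with $\int_\Sigma \tilde u\,dV_g=0$; passing to the limit in the distributional formulation identifies $\tilde u$ as the unique mean-zero solution of $-\Delta_g \tilde u = 8\pi\,\delta_{x_0} - 8\pi$, so $\tilde u = G_{x_0}$, and Urysohn's principle gives convergence of the full sequence. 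For strong $W^{2,2}_{\mathrm{loc}}(\Sigma\setminus\{x_0\})$ convergence, on any compact $K\Subset \Sigma\setminus\{x_0\}$ Remark \ref{rmk3.5} gives $u_1^n\to -\infty$ uniformly on $K$, so $h_1 e^{u_1^n}\to 0$ in $L^\infty(K)$; together with $\|f_n e^{u_n/2}\|_{L^2(K)}\to 0$, standard elliptic regularity applied to $\tilde u_n - G_{x_0}$ yields the claim.

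To treat $w_n$, observe that by Proposition \ref{Never-S} the quantity $\rho_2 h_2 e^{-u_n}/\int_\Sigma h_2 e^{-u_n}\,dV_g = \rho_2 h_2 e^{u_2^n}$ is uniformly bounded in $L^\infty(\Sigma)$. Thus $L^p$-elliptic regularity plus Sobolev embedding provide $\|w_n\|_{C^{1,\alpha}(\Sigma)}\le C$ for some $\alpha\in(0,1)$, and, up to a subsequence, $w_n\to w_{x_0}$ in $C^{1,\alpha}(\Sigma)$. To pass to the limit in \eqref{wc4.1*}, exploit the algebraic identity
$$\frac{h_2 e^{-u_n}}{\int_\Sigma h_2 e^{-u_n}\,dV_g} = \frac{h_2 e^{-\tilde u_n + w_n}}{\int_\Sigma h_2 e^{-\tilde u_n + w_n}\,dV_g},$$
where the constants $\overline{u}_n$ cancel out. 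By Rellich--Kondrachov, the weak $W^{1,p}$ convergence upgrades to strong $L^q(\Sigma)$ convergence for every $q<\infty$ and, along a subsequence, to a.e. convergence; hence $h_2 e^{-\tilde u_n + w_n}\to h_2 e^{-G_{x_0}+w_{x_0}}$ a.e. on $\Sigma$. The uniform $L^\infty$ control of the quotient, together with the normalization $\int_\Sigma \frac{h_2 e^{-u_n}}{\int h_2 e^{-u_n}}\,dV_g=1$, forces via a weak-$*$/Fatou argument the denominator to converge to $\int_\Sigma h_2 e^{-G_{x_0}+w_{x_0}}\,dV_g$. Passing to the limit in \eqref{wc4.1*} then produces exactly \eqref{smfeq}.

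The main obstacle is the identification of the limit of the nonlinear quotient appearing in the equation for $w_n$: the integrand $h_2 e^{-\tilde u_n + w_n}$ only converges a.e. on $\Sigma\setminus\{x_0\}$, and a priori one must worry about concentration at the blow-up point where $G_{x_0}$ has its logarithmic singularity. This is precisely where Proposition \ref{Never-S} becomes indispensable: its global $L^\infty$ bound on $h_2 e^{u_2^n}$ supplies the uniform integrability that rules out such concentration and pins down the correct normalization in the limiting singular mean-field equation.
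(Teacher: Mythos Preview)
Your argument is correct and follows essentially the same route as the paper: set up the Poisson equation for $u_n-\overline u_n+w_n$, use $\mu_1=8\pi\delta_{x_0}$ from Remark~\ref{rmk3.5} and $\|f_ne^{u_n/2}\|_{L^1}\to 0$, invoke potential estimates for the weak $W^{1,p}$ limit and local $L^2$ smallness of the right-hand side for the strong $W^{2,2}_{\mathrm{loc}}$ convergence, then use Proposition~\ref{Never-S} plus elliptic $L^p$ theory and Sobolev embedding for the $C^{1,\alpha}$ compactness of $w_n$. The only notable difference is that you go further than the paper in justifying why the limit $w_{x_0}$ actually solves \eqref{smfeq}: the paper's proof asserts this without detailing the passage to the limit in the nonlinear quotient, whereas you sketch it via the identity $h_2e^{-u_n}/\!\int h_2e^{-u_n}=h_2e^{-\tilde u_n+w_n}/\!\int h_2e^{-\tilde u_n+w_n}$ and the uniform $L^\infty$ bound from Proposition~\ref{Never-S}; the paper only records this computation later, in \eqref{LBeq3}, where it also supplies the missing ingredient (uniform upper bound on $-u_n+\overline u_n$) needed to control the contribution of $B_\delta(x_0)$ to the denominator.
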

\begin{proof}
Observe that $u_n-\overline{u}_n+w_n$ solves
\begin{equation}\label{wc4.2}
-\Delta_g\big(u_n-\overline{u}_n+w_n\big)=8\pi\big(\frac{h_1e^{u_n}}{\int_{\Sigma}h_1e^{u_n}dV_g}-1\big)-f_ne^{\frac{1}{2}u_n} \quad \text{ on } \quad \Sigma.
\end{equation}
By Remark \ref{rmk3.5} and \eqref{eq4.2}, we have $h_1e^{u_n}/\int_{\Sigma}h_1e^{u_n}dV_g\rightharpoonup\delta_{x_0}$ in the sense of measure, and 
$\|f_ne^{\frac{1}{2}u_n}\|_{L^1(\Sigma)}\leq\|f_n\|_{L^2(\Sigma)}(\int_{\Sigma}e^{u_0}dV_g)^{\frac12}\to0$, as $n\to\infty$. 
Consequently, by the potential estimates, $u_n-\overline{u}_n+w_n\rightharpoonup G_{x_0}$ weakly in $W^{1,p}(\Sigma)$ for any $p\in(1,2)$.

On the other hand, by Remark \ref{rmk3.5}, $u_n \to -\infty$ uniformly on any compact set $K \Subset \Sigma\setminus \{x_0\}$. Therefore, 
\begin{equation*}\label{wc4.3}
\begin{aligned}
\int_K\Big|8\pi {\frac{h_1e^{u_n}}{\int_{\Sigma}h_1e^{u_n}dV_g}}-f_ne^{\frac{1}{2}u_n}\Big|^2dV_g\leq C\|f_n\|^2_{L^2(\Sigma)}\to 0 \quad \text{as }\ n\to\infty.
\end{aligned}
\end{equation*}
This estimate, combined with standard elliptic regularity theory, yields the strong convergence in $W^{2,2}_{loc}(\Sigma\setminus\{x_0\}).$

Finally, Proposition \ref{Never-S} implies the boundedness of $e^{-u_n}/\int_{\Sigma}h_2e^{-u_n}dV_g$.
Applying elliptic regularity theory to \eqref{wc4.1*}, we conclude that $\{w_n\}_{n\in\N}$ is bounded in $W^{2,q}(\Sigma)$ for every $q \in (1,\infty)$. 
By the Sobolev embedding theorem, it follows that $w_n \to w_{x_0}$ in $C^{1,\alpha}(\Sigma)$ for some $\alpha\in(0,1).$ 
This completes the proof.
\end{proof}

\subsection{Energy lower bound in the blow-up regime}
We are ready to establish a lower bound for the energy functional $J_{\rho_2}(u_n)$ along the blow-up sequence $u_n$. 
This will be important in Section \ref{Testf-Sec}, where we select initial conditions that prevent blow-up and show that the flow converges to a solution of the stationary problem \eqref{maineq-s}.

Recall that, for $p\in \Sigma$, $\Gamma_p$ denotes the set of solutions to the singular mean field equation \eqref{smfeq}.

\begin{prop}\label{lowerbound}
Let $u_n\in H^2(\Sigma)$ be a blow-up sequence. Then 
\begin{equation}\label{LB5.1}
\begin{aligned}
\lim_{n\rightarrow\infty} J_{\rho_2}(u_n)\geq\inf\limits_{p\in\Sigma}\inf\limits_{w_p\in\Gamma_p}\left\{\tilde{J}_{p}(w_{p})-4\pi A(p) -8\pi\log h_1(p)\right\}-8\pi\log\pi-8\pi,
\end{aligned}
\end{equation}
where $G_{p}$ is the Green function in \eqref{green}, $A(p)$ is the regular part of $G_{p}$ defined in \eqref{Ap},
and $\tilde{J}_{p}$ is the functional \eqref{tildeJ} for the singular mean field equation \eqref{smfeq}.
\end{prop}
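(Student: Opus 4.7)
The plan is to establish the bound for the specific pair $(x_0,w_{x_0})$ produced by the blow-up sequence and then conclude by taking the infimum. By Proposition~\ref{Selection} the blow-up point $x_0\in\Sigma$ is unique with $h_1(x_0)>0$; by Remark~\ref{rmk3.5} the mean $\overline{u}_n\to-\infty$; and by Proposition~\ref{WeakConver} one has $w_n\to w_{x_0}\in\Gamma_{x_0}$ in $C^{1,\alpha}(\Sigma)$ together with the strong convergence $u_n-\overline{u}_n+w_n\to G_{x_0}$ in $W^{2,2}_{loc}(\Sigma\setminus\{x_0\})$. Near $x_0$, the bubble profile $v_1(y)=-2\log(1+\pi h_1(x_0)|y|^2)$ from Proposition~\ref{Selection} gives the sharp asymptotics of $u_n$ at scale $r_1^n$.

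Next, I split $J_{\rho_2}(u_n)$ into its four summands and isolate their divergent and regular parts. The sinh-Gordon specific term is handled directly: since $e^{-G_{x_0}}\sim r^4$ is locally integrable near $x_0$, combining the $C^{1,\alpha}$ convergence of $w_n$ with the strong convergence of $u_n+w_n-\overline{u}_n$ away from $x_0$ yields
\[
-\rho_2\log\!\int_{\Sigma} h_2 e^{-u_n}\,dV_g \;=\; \rho_2\,\overline{u}_n \;-\; \rho_2\log\!\int_{\Sigma} h_2\,e^{-G_{x_0}}e^{w_{x_0}}\,dV_g + o(1).
\]
For the Dirichlet term and the $h_1$-log, I localize at scale $\delta>0$: on $\Sigma\setminus B_\delta(x_0)$ the $W^{2,2}_{loc}$ convergence lets me replace $\tfrac12\int_{\Sigma\setminus B_\delta}|\nabla u_n|^2$ by $\tfrac12\int_{\Sigma\setminus B_\delta}|\nabla(G_{x_0}-w_{x_0})|^2$ modulo $o(1)$, while on $B_\delta(x_0)$ I rescale by $r_1^n$ and compute directly from the bubble $v_1$; the $h_1$-log is controlled by Lemma~\ref{Bounded-F} together with the concentration $h_1 e^{u_1^n}dV_g\rightharpoonup \delta_{x_0}$ furnished by Remark~\ref{rmk3.5}.

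Finally I collect everything and match divergences. The divergent contribution $(8\pi-\rho_2)\overline{u}_n$ combines with the $\rho_2\overline{u}_n$ from the $e^{-u}$ term into $8\pi\overline{u}_n$, which is absorbed by the divergent bubble energy; the logarithmic divergences in $\delta$ coming from $\tfrac12\int_{B_\delta}|\nabla u_n|^2$ cancel with those of $\tfrac12\int_{\Sigma\setminus B_\delta}|\nabla G_{x_0}|^2$, the latter being computed by integration by parts using $-\Delta_g G_{x_0}=8\pi\delta_{x_0}-8\pi$ and the expansion \eqref{Ap}, producing the constant $-4\pi A(x_0)$. The bubble's explicit classification contributes $-8\pi\log h_1(x_0)-8\pi\log\pi-8\pi$, while the regular Green function remainder gives $\tfrac12\int|\nabla w_{x_0}|^2-\rho_2\log\int h_2 e^{-G_{x_0}}e^{w_{x_0}}=\tilde{J}_{x_0}(w_{x_0})$ via \eqref{tildeJ}. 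The main obstacle is to execute this matching rigorously: controlling the intermediate annular region $B_\delta(x_0)\setminus B_{Rr_1^n}(x_1^n)$ uniformly as $n\to\infty$, $R\to\infty$, $\delta\to0$ in the correct order, and identifying the constant $-4\pi A(x_0)$ from a Pohozaev-type boundary contribution at the outer edge of the bubble. Once the identity is proved at $(x_0,w_{x_0})$, taking the infimum over $(p,w_p)\in\Sigma\times\Gamma_p$ yields \eqref{LB5.1}.
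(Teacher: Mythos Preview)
Your overall architecture matches the paper's: split $J_{\rho_2}(u_n)$, handle the $e^{-u}$ term by the $C^{1,\alpha}$ convergence of $w_n$ and the strong convergence of $u_n-\overline{u}_n+w_n$ to $G_{x_0}$ away from $x_0$, compute the inner bubble energy from the explicit profile $v_1$, and compute the outer Dirichlet energy by integrating by parts on $\Sigma\setminus B_\delta(x_0)$ to extract $4\pi A(x_0)$. That part is fine.

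The gap is in the neck. Two points are not right as stated. First, the divergence $8\pi\,\overline{u}_1^n\to-\infty$ is \emph{not} absorbed by the bubble energy: the bubble integral $\tfrac12\int_{B_{Rr_n}}|\nabla u_1^n|^2$ is asymptotically $16\pi\log R+\text{finite}$, which diverges only in $R$, not in $n$. The term that cancels $8\pi\,\overline{u}_1^n$ must come from the neck $B_\delta\setminus B_{Rr_n}$, and producing it requires a \emph{lower} bound on the neck Dirichlet energy of the precise form $-8\pi\,\overline{u}_1^n+\text{(bounded)}$. Second, a Pohozaev-type identity does not give such a lower bound. The paper (following \cite{LiZhu,SunZhu}) uses the Dirichlet principle: the spherical mean $u_n^*(r)$ dominates from below the radial part of the energy, and the harmonic function on the annulus with the same boundary values $u_n^*(\delta),u_n^*(Rr_n)$ gives
\[
\tfrac12\int_{B_\delta\setminus B_{Rr_n}}|\nabla u_1^n|^2\,dV_g \;\ge\; \frac{\pi\big(u_n^*(\delta)-u_n^*(Rr_n)\big)^2}{\log\delta-\log(Rr_n)}.
\]
One then writes $u_n^*(\delta)-u_n^*(Rr_n)=\tau_n+\overline{u}_1^n+2\log r_n$ with $\tau_n$ bounded (computed from the bubble profile at $Rr_n$ and from $G_{x_0}-w_{x_0}$ at $\delta$), and expands the quotient. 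A crucial bootstrap step, which you do not mention, is that the boundedness of $J_{\rho_2}(u_n)$ \emph{forces} $\overline{u}_1^n/\log r_n\to 2$; otherwise the leading term $-\pi\log r_n\,(2-\overline{u}_1^n/\log r_n)^2$ would drive $J_{\rho_2}(u_n)\to+\infty$. Only after this is established do the $n$-divergent terms cancel, and the remaining finite contributions assemble into $\tilde J_{x_0}(w_{x_0})-4\pi A(x_0)-8\pi\log h_1(x_0)-8\pi\log\pi-8\pi$. Replace the Pohozaev remark by this harmonic-comparison argument and the proof goes through.
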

\begin{proof}
Recalling the definition of the energy functional $J_{\rho_2}$ and the definition of $u_1^n$ in \eqref{Def-u1u2}, we have
\begin{equation}\label{LBeq1}
J_{\rho_2}(u_n)=\frac{1}{2}\int_{\Sigma}|\nabla_g u_1^n|^2dV_g-\rho_2\log\int_{\Sigma}h_2e^{-u_n+\overline{u}_n}dV_g+8\pi\overline{u}_1^n.
\end{equation}


In order to compute the second term in \eqref{LBeq1}, we claim that a sequence $(-u_n+\overline{u}_n)$ is uniformly bounded above.
If not, then by \eqref{bdd4.1} and the fact that $S=\{x_0\}$, there exists a sequence of points ${y_n}\to x_0$, such that $-u_n(y_n)+\overline{u}_n\to \infty$. 
On the other hand, by Lemma \ref{Bounded-F} and Remark \ref{rmk3.5}, we know that $\overline{u}^n_1\to-\infty$, while $\int_{\Sigma}h_1e^{u_n}dV_g$ remains bounded. Hence, $\overline{u}_n=\overline{u}^n_1+\log\int_{\Sigma}h_1e^{u_n}dV_g\to-\infty$. 
This implies that $u_n(y_n)\to-\infty$, as $n\to\infty$.
However, by Proposition \ref{Selection}, $x_0$ is the unique blow-up point of $u^n_1$. 
Since $\int_{\Sigma}h_1e^{u_n}dV_g$ is bounded, we conclude that $u_n(y_n)\to\infty$ as $n\to\infty$.
The two conclusions contradict each other, and therefore the claim is proved.


										
With this claim, we can compute the second term. By Proposition \ref{WeakConver}, we know that $u_n-\overline{u}_n+w_n \to G_{x_0}$ in $C^\alpha_{loc}(\Sigma\setminus\{x_0\})$, and $w_n \to w_{x_0}$ in $C^\alpha(\Sigma)$. Hence, for sufficiently small $\delta>0$, we obtain
\begin{equation}\label{LBeq3}
	\lim_{n\to\infty}\int_\Sigma h_2 e^{-u_n+\overline{u}_n}dV_g =\int_{\Sigma \setminus B_\delta^g(x_0)} h_2 e^{-G_{x_0}+w_{x_0}}dV_g +o_\delta(1) = \int_{\Sigma} h_2 e^{-G_{x_0}+w_{x_0}}dV_g +o_\delta(1).
\end{equation}

We next compute the first term in \eqref{LBeq1}. 
To this end, fix an isothermal chart $\Psi:B_{\tilde r}(0)\to\Sigma$ with $\Psi(0)=x_0$ and $g=e^{\psi(x)}|dx|^2$.
Set $x_n:=x_1^n$ and $r_n:=r_1^n$ as in \eqref{defxn}, and choose $0<\delta<\tilde r<Rr_n^{-1}\tilde r$.
All balls are taken in the chart and then pushed forward by $\Psi$; for brevity, in the remainder of the proof, we write $B_r(x_n)$ both for the Euclidean ball $B_r(\Psi^{-1}(x_n))\subset\mathbb{R}^2$ and for its image $\Psi(B_r(\Psi^{-1}(x_n)))\subset\Sigma$.
With this convention, we decompose $\Sigma= B_{R r_n}(x_n) \cup \big(B_\delta(x_n)\setminus B_{R r_n}(x_n)\big) \cup \big(\Sigma\setminus B_\delta(x_n)\big).$


Near the blow-up point $x_0$, considering the rescaled solution $v_1^n$ in Proposition \ref{Selection}, we obtain that
\begin{equation}\label{LB5.2}
\begin{aligned}
\lim_{n\rightarrow\infty}{\frac{1}{2}}\int_{B_{Rr_n}(x_n)}|\nabla_g u_1^n|^2dV_g&={\frac{1}{2}}\lim_{n\rightarrow\infty}\int_{B_R(0)}|\nabla v_1^n|^2 dx
=\pi\int^R_0\Big|{\frac{4\pi h_1(x_0) r}{\pi h_1(x_0) r^2+1}}\Big|^2rdr\\
&=8\pi\log\pi-8\pi+16\pi\log R+8\pi\log h_1(x_0)+o_R(1),
\end{aligned}
\end{equation}
where $o_R(1) \to 0$ as $R \to\infty$.

For the domain $\Sigma \setminus B^g_\delta(x_n)$, i.e. away from the blow-up point $x_0$, Proposition \ref{WeakConver} together with the identity $\int_\Sigma \nabla_g G_{x_0} \nabla_g w_{x_0} dV_g=8\pi w_{x_0}(x_0)$ yields 
\begin{equation}\label{LB5.3}
\begin{aligned}
&\lim_{n\rightarrow\infty}{\frac{1}{2}}\int_{\Sigma\setminus B_{\delta}(x_n)}|\nabla_gu_1^n|^2dV_g=\lim_{n\rightarrow\infty}{\frac{1}{2}}\int_{\Sigma\setminus B_{\delta}(x_0)}|\nabla_g\big(u_n-\overline{u}_n+w_n\big)-\nabla_gw_n|^2dV_g\\
&={\frac{1}{2}}\int_{\Sigma}|\nabla_gw_{x_0}|^2dV_g+{\frac{1}{2}}\int_{\Sigma\setminus B_{\delta}(x_0)} |\nabla_g G_{x_0}|^2 dV_g -8\pi w_{x_0}(x_0)+  o_\delta(1).
\end{aligned}
\end{equation}
Integrating by parts the second term of \eqref{LB5.3} and using the local expansion \eqref{Ap} (with $p=x_0$), we obtain
\begin{equation}\label{LB5.3*}\begin{aligned}
{\frac{1}{2}}\int_{\Sigma\setminus B_\delta(x_0)} |\nabla_g G_{x_0}|^2 dV_g&=-{\frac{1}{2}}\int_{\Sigma\setminus B_\delta(x_0)} (\Delta_g G_{x_0}) G_{x_0}dV_g  -{\frac{1}{2}}\int_{\partial B_\delta(x_0)} \frac{\partial G_{x_0}}{\partial \nu} G_{x_0}dS_g\\
&=-16\pi\log\delta+4\pi A(x_0) +o_\delta(1),
\end{aligned}\end{equation}
where $o_\delta(1) \to 0$ as $\delta \to 0$.

For the neck domain, we compare $u_n$ with harmonic functions (see \cite{LiZhu,SunZhu}).
Define the spherical mean $u_n^*$ of $u_n$ by
\begin{equation*}
u^*_n(r):={\frac{1}{2\pi}}\int^{2\pi}_0u_1^n(x_n+re^{i\theta})d\theta.
\end{equation*}
Then $u_1^n$ and $u_n^*$ satisfy the following inequality (e.g. see \cite[inequality (3.4)]{LiWang})
\begin{equation*}
\int_{B_s \setminus B_r} |\nabla u_n^*|^2 dx \le \int_{B_s \setminus B_r} \Big|\frac{\partial u_1^n}{\partial r}\Big|^2 dx.
\end{equation*}
Let $\tilde{u}_n^*$ be the harmonic function on the neck domain with boundary conditions $\tilde{u}_n^*(r)=u_n^*(r)$ for $r=\delta$ and $r=Rr_n$.
Then it satisfies the following inequality (e.g. see \cite[equation (31)]{LiZhu})
\begin{equation*}
{\frac{1}{2}}\int_{B^g_{\delta}(x_n)\setminus B^g_{Rr_n}(x_n)}|\nabla_g u_1^n|^2 dV_g
\ge {\frac{1}{2}}\int_{B^g_{\delta}(x_n)\setminus B^g_{Rr_n}(x_n)}|\nabla_g \tilde{u}_n^*|^2 dV_g ={\frac{{\pi}\big(u^*_n(\delta)-u^*_n(Rr_n)\big)^2}{\log\delta-\log(Rr_n)}}.
\end{equation*}
\par

Define $\tau_n:=u^*_n(\delta)-u^*_n(Rr_n)-\overline{u}_1^n-2\log r_n$  and $u_1^n-\overline{u}_1^n+w_n \to G_{x_0}$ in $C^\alpha_{loc}(\Sigma \setminus\{x_0\})$ from Proposition \ref{WeakConver}, we obtain that
\begin{equation}\label{LB5.4}
\begin{aligned}
\lim_{n\rightarrow\infty}(u^*_n(Rr_n)+2\log r_n)&=-2\log \big( \pi h_1(x_0)R^2\big)+o_R(1),\\
\lim_{n\rightarrow\infty}\big(u^*_n(\delta)-\overline{u}_1^n\big)&=-4\log\delta+A(x_0)-w_{x_0}(x_0)+o_{\delta}(1),
\end{aligned}
\end{equation}
hence
\begin{equation}\label{LB-Neck-2}
\tau_n \rightarrow4\log{\frac{R}{\delta}}+A(x_0)-w_{x_0}(x_0)+2\log\pi+2\log h_1(x_0)+o_{\delta}(1)+o_R(1) \quad \text{ as } n \to\infty.
\end{equation}
\par

Since $\overline{u}_1^n \to -\infty$ and $r_n \to 0$ as $n\to\infty$, by straightforward calculation (e.g. see \cite{LiZhu,SunZhu}), we obtain that for large $n$,
\begin{equation}\label{LB5.5}
\begin{aligned}
&{\frac{{\pi}\big(u^*_n(\delta)-u^*_n(Rr_n)\big)^2}{\log\delta-\log(Rr_n)}}={\frac{\pi\big(\tau_n+\overline{u}_1^n+2\log r_n\big)^2}{-\log r_n}}\Big(1-{\frac{\log R-\log\delta}{-\log r_n}}\Big)^{-1}\\
&\geq{\frac{\pi\big(\tau_n+\overline{u}_1^n+2\log r_n\big)^2}{-\log r_n}}\Big(1+{\frac{\log R-\log\delta}{-\log r_n}}+{\frac{A'}{(\log r_n)^2}}\Big)\\
&\geq-\pi\log r_n\big(2-{\frac{\overline{u}_1^n}{\log r_n}}\big)^2-8\pi\overline{u}_1^n -2\pi\tau_n(2+{\frac{\overline{u}_1^n}{\log r_n}})+\pi\big(2+{\frac{\overline{u}_1^n}{\log r_n}}\big)^2\log{\frac{R}{\delta}}\\
&\quad+{\frac{2\pi\overline{u}_1^n\tau_n}{(\log r_n)^2}}\log{\frac{R}{\delta}}+{\frac{\pi A'}{-\log r_n}}\big(2+{\frac{\overline{u}_1^n}{\log r_n}}\big)^2+{\frac{2\pi A'\tau_n}{-(\log r_n)^2}}\big(2+{\frac{\overline{u}_1^n}{\log r_n}}\big) \\
&\quad\quad+o_R(1)+o_{\delta}(1)+o_n(1).
\end{aligned}
\end{equation}
\par

Since $J_{\rho_2}(u(t))$ is decreasing with respect to $t$, letting $n\to\infty$, we must have  $\overline{u}_1^n/\log r_n \rightarrow2$.
Indeed, if this were not the case, then as $r_n \to 0$ and $\overline{u}_1^n \to -\infty$, the first term $-\pi \log r_n (2-{\frac{\overline{u}_1^n}{\log r_n}}\big)^2$ and the second term $-8\pi\overline{u}_1^n$ in \eqref{LB5.5} would dominate all the remaining terms.
Moreover, combining \eqref{LBeq1}--\eqref{LB5.5}, we obtain
\begin{equation*}
J_{\rho_2}(u_n) \ge -\pi\log r_n\big(2-{\frac{\overline{u}_1^n}{\log r_n}}\big)^2 (1+o_n(1)).
\end{equation*}
since the contribution of $8\pi\overline{u}_1^n$ in $J_{\rho_2}(u_n)$ is canceled by the second term on the right-hand side of \eqref{LB5.5}.
This contradicts the boundedness of the energy $J_{\rho_2}(u_n)$. Therefore, we conclude that $\overline{u}_1^n/\log r_n \rightarrow2$.
\par

Substituting $\lim\limits_{n\to\infty}\overline{u}_1^n/\log r_n=2$ and the expression of $\tau_n$ from \eqref{LB-Neck-2} into \eqref{LB5.5}, we derive the inequality over the neck region:
\begin{equation}\label{LB-Neck-5}
	\begin{aligned}
		{\frac{1}{2}}\int_{{B_{\delta}(x_n)\backslash {B_{Rr_n}(x_n)}}}|\nabla_gu^n_1|^2dV_g\geq&-16\pi\log r_n-8\pi A(x_0)+8\pi w_{x_0}(x_0)-16\pi\log\pi\\
		&\quad-16\pi\log h_1(x_0)-16\pi\log{\frac{R}{\delta}}+O(\frac{1}{\log r_n}).
	\end{aligned}
\end{equation}
\par

Finally, combining \eqref{LBeq1}--\eqref{LB5.3*} and \eqref{LB-Neck-5} together, and letting $n \to \infty$ first and $R\to \infty$, $\delta \to 0$ next, we obtain the desired lower bound \eqref{LB5.1}.
%
This completes the proof.
\end{proof}

\medskip
\noindent

\section{Proof of Theorem \ref{main-thm}}\label{Testf-Sec}

In this section, we prove Theorem \ref{main-thm}. Guided by the limit profile in Proposition \ref{WeakConver} and the energy lower bound \eqref{LB5.1}, we construct test profiles whose energy lies strictly below the barrier level. This provides initial data for which blow-up is precluded and the flow converges to a stationary solution.

\subsection{Construction of a sub-barrier test function}
Define the energy barrier level
\[
L_* := \inf_{p\in\Sigma} \inf_{w\in\Gamma_p}\Big\{\tilde J_p(w)-4\pi A(p)-8\pi\log h_1(p)\Big\}-8\pi\log\pi-8\pi .
\]
Thus $L_*$ is the barrier below which blow-up cannot occur by \eqref{LB5.1}. 
We first show that the infimum is attained by a minimizing pair with $p_0 \in \Sigma$ and $w_{p_0} \in \Gamma_{p_0}$.


\begin{prop}\label{Compt}
There exist $p_0 \in \Sigma$ and $w_{p_0} \in \Gamma_{p_0}$ such that
\begin{equation}\label{Compt-w-x-0}
\tilde{J}_{p_0}(p_0)-4\pi A(p_0) -8\pi\log h_1(p_0)=\inf_{p\in \Sigma}\,\inf_{w_{p}\in \Gamma_{p}} \left\{\tilde{J}_{p}(w_p)-4\pi A(p) -8\pi\log h_1(p)\right\}.
\end{equation}
\end{prop}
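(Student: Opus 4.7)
The plan is a two–variable direct method. Set
\[
F(p,w) := \tilde J_p(w) - 4\pi A(p) - 8\pi \log h_1(p),
\]
and take a minimizing sequence $(p_n,w_n)$ for $\inf F$ with each $w_n$ chosen to be a global minimizer of $\tilde J_{p_n}$ on the mean--zero subspace of $H^1(\Sigma)$. By compactness of $\Sigma$, extract $p_n\to p_0$; by a uniform coercivity bound, extract a weak $H^1$--limit $w^*$ of $w_n$; finally, use weak lower semi--continuity and a standard minimizing comparison to show that $(p_0,w^*)$ realizes the infimum and that $w^*\in\Gamma_{p_0}$.

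For the fixed--$p$ step, note that $h_2 e^{-G_p}\in L^\infty(\Sigma)$ since $G_p$ is smooth away from $p$ and $G_p(x)\to+\infty$ as $x\to p$. Combining with the Moser--Trudinger inequality \eqref{MTineq} and the assumption $\rho_2<8\pi$ gives, for $\bar w=0$,
\[
\tilde J_p(w)\;\ge\;\frac{8\pi-\rho_2}{16\pi}\int_\Sigma|\nabla_g w|^2\,dV_g \;-\; C_p,
\]
so $\tilde J_p$ is coercive and bounded below. The standard direct method (weak lower semi--continuity of the Dirichlet integral, together with dominated convergence for the exponential term, using the $L^q$--bounds on $e^{w}$ furnished by Moser--Trudinger) produces a minimizer, which is a critical point of $\tilde J_p$ and therefore belongs to $\Gamma_p$.

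For the passage to the limit, continuity of $A$ and a uniform-in-$p$ lower bound for $C_p$ (via the expansion \eqref{Ap}) keep $\tilde J_{p_n}(w_n)$ bounded below, so boundedness of $F(p_n,w_n)$ forces $h_1(p_n)\ge c>0$, whence $h_1(p_0)>0$. The a priori upper bound $\tilde J_{p_n}(w_n)\le -\rho_2\log\int_\Sigma h_2 e^{-G_{p_n}}dV_g$ is also uniform, since $p\mapsto\int_\Sigma h_2 e^{-G_p}dV_g$ is a continuous strictly positive function on the compact set $\Sigma$ (using $h_2\not\equiv 0$). Combined with coercivity this gives a uniform $H^1$--bound on $w_n$. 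Extract $w_n\rightharpoonup w^*$ in $H^1$, strongly in every $L^q$, and a.e. The weights $h_2 e^{-G_{p_n}}$ converge a.e. to $h_2 e^{-G_{p_0}}$ and are uniformly bounded in $L^\infty$, while $e^{w_n}$ is uniformly $L^q$--bounded for every finite $q$ by Moser--Trudinger; dominated convergence yields
\[
\int_\Sigma h_2 e^{-G_{p_n}} e^{w_n}\,dV_g\;\longrightarrow\;\int_\Sigma h_2 e^{-G_{p_0}} e^{w^*}\,dV_g.
\]
The same argument applied with any fixed mean--zero test $w$ gives $\tilde J_{p_n}(w)\to\tilde J_{p_0}(w)$; combined with $\tilde J_{p_n}(w_n)\le\tilde J_{p_n}(w)$ and the lower semi--continuity of the Dirichlet energy, one obtains $\tilde J_{p_0}(w^*)\le\tilde J_{p_0}(w)$ for every admissible $w$. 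Hence $w^*$ minimizes $\tilde J_{p_0}$, so $w^*\in\Gamma_{p_0}$, and continuity of $A$ and of $\log h_1$ at $p_0$ yields $F(p_0,w^*)\le\liminf_n F(p_n,w_n)=\inf F$.

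The main obstacle is the uniform control of the moving weights $h_2 e^{-G_{p_n}}$: one needs both a uniform $L^\infty$--bound and a.e.\ convergence as $p_n\to p_0$, which rest on a version of \eqref{Ap} with the $O(r^2)$--remainder uniform in $p$ in a neighbourhood of $p_0$. This is a standard property of Green functions on a smooth closed surface, but it must be invoked carefully so that the dominated--convergence arguments go through when the singular points are moving. Once this is in hand, the remaining ingredients (Moser--Trudinger, lower semi--continuity, and the fixed--$p$ minimization) combine cleanly to yield the assertion.
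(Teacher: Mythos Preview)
Your argument is correct, but it follows a genuinely different route from the paper's.

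The paper takes an \emph{arbitrary} minimizing sequence $(p_n,w_{p_n})$ with $w_{p_n}\in\Gamma_{p_n}$, passes to the shifted function $\xi_n:=w_{p_n}-G_{p_n}-\log\int_\Sigma h_2e^{-G_{p_n}+w_{p_n}}dV_g$, which solves the singular Liouville equation $-\Delta_g\xi_n=\rho_2 h_2 e^{\xi_n}-\rho_2-8\pi(\delta_{p_n}-1)$, and then proves by a blow-up/contradiction argument (rescaling at a maximum point and invoking the Chen--Li and Prajapat--Tarantello classifications) that $\sup_\Sigma\xi_n$ is uniformly bounded. Elliptic $W^{2,p}$--estimates then give $C^1$--compactness of $w_{p_n}$.

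Your approach is purely variational: you exploit the freedom to take $w_n$ as the \emph{global minimizer} of $\tilde J_{p_n}$ (legitimate because $\inf_{w\in\Gamma_p}\tilde J_p(w)=\min\tilde J_p$), obtain a uniform $H^1$--bound from Moser--Trudinger coercivity together with the test $w=0$, and conclude by weak lower semi--continuity plus a Vitali/dominated--convergence argument for the moving weights $h_2e^{-G_{p_n}}$. This is more elementary and completely avoids the blow-up machinery and classification results. The trade-off is that the paper's argument proves more: it yields $C^1$--precompactness of \emph{any} sequence of critical points (not only minimizers), i.e.\ compactness of the full family $\bigcup_p\Gamma_p$. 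For the purposes of the present proposition and its use downstream (where only one minimizing pair is needed, and elliptic regularity makes your $w^*$ smooth anyway), your weaker conclusion suffices.
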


\begin{proof}
Let $(p_n,w_{p_n})$ be a minimizing sequence of $L_*$, and 
up to a subsequence, we may assume that $p_n \to p_0\in\Sigma$.

Define $\xi_n:=w_{p_n} -G_{p_n}-\log\int_{\Sigma}h_2\exp(-G_{p_n}+w_{p_n})dV_g$, and substitute $\xi_n$ into \eqref{smfeq}. Then $\xi_n$ satisfies a singular Liouville type equation
\begin{equation}\label{xi-1}
-\Delta_g \xi_n=\rho_2 h_2 e^{\xi_n}- \rho_2 - 8\pi(\delta_{p_n}-1) \ \ \text{ on }\ \Sigma, \quad
\int_{\Sigma} e^{\xi_n}dV_g< C.
\end{equation}
\par

We claim that the sequence $\{\xi_n\}_{n\in\mathbb{N}}$ is uniformly bounded above.
If the claim is true, consider the following equation that $w_{p_n}$ satisfies
\begin{equation*}
-\Delta_gw_{p_n}=\rho_2h_2e^{\xi_n}-\rho_2\ \text{on}\ \Sigma,\quad \int_{\Sigma}w_{p_n}dV_g=0.
\end{equation*}
\par

By standard elliptic estimates, we have that $\{w_{p_n}\}_{n\in\mathbb{N}}$ is uniformly bounded in $H^1(\Sigma)$. Moreover, by $L^p$-estimates, $\{w_{p_n}\}_{n\in\mathbb{N}}$ is uniformly bounded in $W^{2,p}(\Sigma)$ and, up to a subsequence,
$w_{p_n} \to w_{p_0}$ in $C^1(\Sigma)$.
Therefore, the pair $(p_0,w_{p_0})$ attains the infimum.

Now it only remains to prove the claim. We first note that, for each $n\in \N$, by standard elliptic estimates, $w_{p_n} \in W^{2,p}(\Sigma)$ and $\max\limits_{x\in\Sigma} w_{p_n}(x) < \infty$, and it follows that $\max\limits_{x\in\Sigma} \xi_n(x) < \infty$. 
The remaining point is to show that this upper bound can be chosen uniformly in $n$.
To the contrary, suppose that 
$\max\limits_{x\in\Sigma}\xi_n(x):=\xi_n(y_n) \rightarrow \infty$.
Taking a subsequence, we may assume $y_n \to y_0$.
Pick an isothermal coordinate system centered at $y_0$, such that $g=e^{\varphi}\big(dx^2_1+dx^2_2\big)$ and $\varphi(0)=1.$ 
Set $r_n:=\exp(-\xi_n(y_n)/2) \rightarrow 0$.

We distinguish two cases according to the relative position of $p_n$ and $y_n$ : 

\textbf{Case 1.} $|p_n-y_n|/r_n \to \infty$. Define the rescaled function $\psi_n(x):= \xi_n(r_n x+y_n)+2\log r_n$.
By standard blow-up analysis, we may assume that $\psi_n \to \psi$ weakly in $H^2_{loc}(\R^2)$ and strongly in $C^\alpha_{loc}(\R^2)$, and $\psi$ satisfies
\begin{equation*}
-\Delta_{\R^2} \psi(y)= \rho_2 h_2(y_0) e^{\psi(y)} \ \ \text{ in } \ \ \R^2, \quad \int_{\R^2} h_2(y_0)e^{\psi(y)}dy \le 1, \quad  \int_{\R^2} e^{\psi(y)}dy < \infty.
\end{equation*}
If $h_2(y_0)>0$, this contradicts the classification result \cite{ChenLi} since $\rho_2 <8\pi$.
If $h_2(y_0)=0$, then $\psi$ is harmonic, and it contradicts the fact that $\int_{\R^2} e^{\psi(y)}dy < \infty$.


\textbf{Case 2.} $|p_n-y_n|/r_n \to A \in [0,\infty)$. We define the rescaled function $\psi_n(x):= \xi_n\big(|p_n-y_n|x+y_n\big)+2\log r_n$.
By the arguments in Proposition \ref{Never-S}, we have that $\psi_n \to \psi$ weakly in $H^2_{loc}(\R^2 \setminus \{\overline{z}_0\})$ and strongly in $C^\alpha_{loc}(\R^2 \setminus\{\overline{z}_0\})$ where $\overline{z}_0:=\lim_{n\to\infty} (p_n-y_n)/|p_n-y_n| \in \R^2 \setminus \{0\}$.
Therefore, $\psi(x) \le 0$ and $\psi(0)=0$.
Moreover, by the elliptic regularity theory, $\psi$ satisfies
\begin{equation*}
-\Delta_{\R^2} \psi(y)= A^2 \rho_2 h_2(y_0) e^{\psi(y)} - 8\pi\delta_{\tilde{x}_0} \ \ \text{ in } \ \ \R^2, \quad \int_{\R^2} h_2(y_0)e^{\psi(y)}dy \le 1, \quad  \int_{\R^2} e^{\psi(y)}dy < \infty.
\end{equation*}
If $A^2 h_2(y_0)>0$, then this contradicts the classification result in \cite{PT}.
If $A^2 h_2(y_0)=0$, then it contradicts the maximum principle.

\medskip

All two cases lead to a contradiction. Thus, the sequence $\xi_n$ must be uniformly bounded above, and this completes the proof of the claim.
\end{proof}





\begin{remark}\label{px0}
For each $p\in\Sigma$, the functional
\[
\tilde J_p(u)=\tfrac12\int_\Sigma|\nabla_g u|^2\,dV_g-\rho_2\log\!\int_\Sigma h_2\,e^{-G_p}e^u\,dV_g
\]
admits a uniform lower bound, independent of $p$, i.e.
there exists $C>0$ such that $\tilde J_p(u)\ge -C$ for all $p\in\Sigma$ and all $u\in H^1(\Sigma)$ with $\int_\Sigma u=0$.
Indeed, in an isothermal coordinate centered at $p$, one has $e^{-G_p}\sim r^{4}\,\tilde h_p(x)$ with $\tilde h_p$ smooth and strictly positive; by compactness of $\Sigma$ and smooth dependence on $p$, the weighted singular Moser–Trudinger inequality (see \cite{CM}) holds with constants uniform in $p$.
In particular, any minimizer $w_p\in\Gamma_p$ satisfies $\tilde J_p(w_p)>-\infty$ uniformly in $p$.

By Proposition~\ref{Compt} there exists a minimizing pair $(p_0,w_{p_0})$ such that
\[
\tilde J_{p_0}(w_{p_0})-4\pi A(p_0)-8\pi\log h_1(p_0)
=\inf_{p\in\Sigma}\ \inf_{w\in\Gamma_p}\big\{\tilde J_p(w)-4\pi A(p)-8\pi\log h_1(p)\big\}<\infty.
\]
Hence $-8\pi\log h_1(p_0)<\infty$ and therefore $h_1(p_0)>0$.
\end{remark}

With the minimizing pair $(p_0,w_{p_0})$ from Proposition \ref{Compt} in hand, we construct a sub-barrier test function $\tilde{\Phi}_\epsilon$ with $J_{\rho_2}(\tilde{\Phi}_\epsilon)<L_*$. 
Unlike \cite{DJLW}, Proposition \ref{WeakConver} shows that $u_n-\overline{u}_n \rightharpoonup G_{x_0}-w_{x_0}$; motivated by this decomposition, we construction a function $\Phi_\epsilon$ centered at $p_0$ and subtract $w_{p_0}$.


We work in normal coordinates $(r,\theta)$ centered at $p_0$ and 
we will repeatedly use the standard expansions, uniform in $\theta$:
\begin{align}
G_{p_0}(x) &= -4\log r + A(p_0) + b_1 r\cos\theta + b_2 r\sin\theta + O(r^2), \label{eq:Green-exp}\\
dV_g &= \Big(1-\frac{K(p_0)}{6}r^2 + O(r^3)\Big) r dr d\theta. \label{eq:area-exp}
\end{align}

For $0<\epsilon \ll 1$, we define $\tilde{\Phi}_{\epsilon}:=\Phi_{\epsilon}-w_{p_0}$, where $\Phi_{\epsilon}$ constructed in \cite{DJLW} is given as follows:
\begin{equation}\label{tf6.1}
\Phi_{\epsilon}:=
\left\{\begin{aligned}
&-2\log\big(r^2+\epsilon\big)+b_1r\cos\theta+b_2r\sin\theta+\log\epsilon,\quad &x\in B_{\alpha{\sqrt{\epsilon}}}(p_0),\\
&\big(G_{p_0}-\eta\beta(r,\theta)\big)-2\log\Big(\tfrac{\alpha^2+1}{\alpha^2}\Big)-A(p_0)+\log\epsilon,\quad & x\in B_{2\alpha{\sqrt{\epsilon}}}(p_0)\backslash B_{\alpha{\sqrt{\epsilon}}}(p_0),\\
&G_{p_0}-2\log\Big(\tfrac{\alpha^2+1}{\alpha^2}\Big)-A(p_0)+\log\epsilon,\quad &x\in\Sigma\backslash B_{2\alpha{\sqrt{\epsilon}}}(p_0).
\end{aligned}\right.
\end{equation}
Here $\eta\in C^{\infty}_0(B_{2\alpha\sqrt{\epsilon}}(p_0))$ satisfies $\eta\equiv1$ in $B_{\alpha\sqrt{\epsilon}}(p_0)$ and $|\nabla_g\eta|\leq C/(\alpha\sqrt{\epsilon})$, and $\alpha=\alpha(\epsilon) \gg 1$ is chosen so that $\alpha^4\epsilon=1/\log(-\log\epsilon)\to0$ as $\epsilon \to 0$.

\begin{prop}\label{TFthm}
Suppose that $8\pi-2K(x)+\Delta_g\log h_1(x)-\rho_2>0$ for all $x\notin h^{-1}_1(\{0\})$. Then, for $\epsilon$ small enough, the following inequality holds:
\begin{equation}\label{tf6}
\begin{aligned}
J_{\rho_2}(\tilde{\Phi}_{\epsilon}) < \inf_{p\in\Sigma}\inf_{w_p\in\Gamma_{p} }\Big( \tilde{J}_{p}(w_p)-4\pi A(p) -8\pi\log h_1(p)\Big)-8\pi\log\pi-8\pi.
\end{aligned}
\end{equation}
\end{prop}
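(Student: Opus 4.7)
The plan is to expand $J_{\rho_2}(\tilde{\Phi}_{\epsilon})$ term by term using the decomposition $\tilde{\Phi}_{\epsilon}=\Phi_{\epsilon}-w_{p_0}$, recover the Ding--Jost--Li--Wang mean-field expansion from the $\Phi_{\epsilon}$-part centered at $p_0$, and use the Euler--Lagrange equation \eqref{smfeq} for $w_{p_0}$ to show that the extra contributions arising from subtracting $w_{p_0}$ and from the new term $-\rho_2\log\int h_2 e^{-\tilde{\Phi}_{\epsilon}}$ combine into exactly $\tilde{J}_{p_0}(w_{p_0})$. In this way the comparison with the barrier level in Proposition \ref{Compt} reduces to sharp control of a single $\epsilon\log(1/\epsilon)$-correction.

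First I would split the Dirichlet energy as
$$\tfrac12\int_{\Sigma}|\nabla_g\tilde{\Phi}_{\epsilon}|^2 dV_g=\tfrac12\int_{\Sigma}|\nabla_g\Phi_{\epsilon}|^2 dV_g-\int_{\Sigma}\nabla_g\Phi_{\epsilon}\cdot\nabla_g w_{p_0}\,dV_g+\tfrac12\int_{\Sigma}|\nabla_g w_{p_0}|^2 dV_g.$$
The first integral is exactly the quantity treated in \cite{DJLW}: combining the bubble contribution near $p_0$ with the outer harmonic expansion \eqref{eq:Green-exp}--\eqref{eq:area-exp} yields the classical expansion together with a sharp $\epsilon\log(1/\epsilon)$-correction whose coefficient involves $K(p_0)$ and $\Delta_g\log h_1(p_0)$. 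For the cross term I would integrate by parts and plug in \eqref{smfeq}, obtaining
$$-\int_{\Sigma}\nabla_g\Phi_{\epsilon}\cdot\nabla_g w_{p_0}\,dV_g=\rho_2\Bigl(\overline{\Phi_{\epsilon}}-\frac{\int_{\Sigma}\Phi_{\epsilon}\,h_2 e^{-G_{p_0}+w_{p_0}}dV_g}{\int_{\Sigma}h_2 e^{-G_{p_0}+w_{p_0}}dV_g}\Bigr),$$
which, since $w_{p_0}\in C^{1,\alpha}(\Sigma)$ and $\Phi_{\epsilon}\to G_{p_0}+\text{const}$ locally in $L^1(\Sigma\setminus\{p_0\})$, has a finite and explicit limit as $\epsilon\to0$.

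Next, I would expand $-8\pi\log\int h_1 e^{\tilde{\Phi}_{\epsilon}}$ and $-\rho_2\log\int h_2 e^{-\tilde{\Phi}_{\epsilon}}$ by localizing to the three regions in \eqref{tf6.1}. Since $e^{-w_{p_0}}$ is smooth on $\Sigma$, the bubble-scale analysis gives $\int h_1 e^{\tilde{\Phi}_{\epsilon}}=\pi h_1(p_0)e^{-w_{p_0}(p_0)}+O(\epsilon\log(1/\epsilon))$, the correction coming from the Taylor expansion of $h_1 e^{-w_{p_0}}$ at $p_0$ and from the curvature factor in \eqref{eq:area-exp}. Conversely, in the $h_2$-term the bubble region contributes negligibly (since $e^{-\Phi_{\epsilon}}$ is tiny there), so the outer region dominates and
$$\int_{\Sigma}h_2 e^{-\tilde{\Phi}_{\epsilon}}dV_g=\frac{e^{A(p_0)}}{\epsilon}\Bigl(\frac{\alpha^2+1}{\alpha^2}\Bigr)^{2}\int_{\Sigma}h_2 e^{-G_{p_0}+w_{p_0}}dV_g\,+o(\epsilon^{-1}).$$
For the mean term, integrating $\Phi_{\epsilon}$ on the three regions and using $\int_{\Sigma}w_{p_0}=0$ gives $\int_{\Sigma}\tilde{\Phi}_{\epsilon}\,dV_g=\log\epsilon-A(p_0)+o(1)$.

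Assembling the four contributions, the $\log\epsilon$-divergences cancel exactly (the $\rho_2\log\epsilon$ from the $h_2$-log combined with the $(8\pi-\rho_2)\log\epsilon$ from the mean equals the $8\pi\log\epsilon$ produced by the Dirichlet energy), and the remaining $O(1)$ part simplifies --- using the identity $\tfrac12\int|\nabla_g w_{p_0}|^2 dV_g-\rho_2\log\int h_2 e^{-G_{p_0}+w_{p_0}}dV_g=\tilde{J}_{p_0}(w_{p_0})$ and Proposition \ref{Compt} --- to exactly $\tilde{J}_{p_0}(w_{p_0})-4\pi A(p_0)-8\pi\log h_1(p_0)-8\pi-8\pi\log\pi$, i.e.\ the right-hand side of \eqref{tf6}. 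The leftover $\epsilon\log(1/\epsilon)$-term then inherits a coefficient proportional to $-(8\pi-\rho_2-2K(p_0)+\Delta_g\log h_1(p_0))$, which is strictly negative by hypothesis, yielding the strict inequality \eqref{tf6} for all sufficiently small $\epsilon$. The main obstacle I expect is precisely the bookkeeping of this $\epsilon\log(1/\epsilon)$-term: subleading contributions from the neck region in \eqref{tf6.1} and from the $(8\pi-\rho_2)\overline{\Phi_{\epsilon}}$ piece must be tracked carefully, and one must verify that their net effect is to shift the DJLW coefficient by exactly $-\rho_2$, which is what makes the sharper hypothesis $8\pi-\rho_2-2K+\Delta_g\log h_1>0$ the right threshold for the sinh-Gordon setting.
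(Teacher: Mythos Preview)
Your approach is correct and matches the paper's proof closely. One clarification on what you flag as ``the main obstacle'': the $-\rho_2$ shift in the $\epsilon\log(1/\epsilon)$-coefficient does \emph{not} come from neck contributions or from the $(8\pi-\rho_2)\overline{\Phi_\epsilon}$ piece --- the subleading $\epsilon\log(\alpha^2+1)$-type terms arising there actually cancel between the cross term $-\int\nabla_g\Phi_\epsilon\cdot\nabla_g w_{p_0}$ and the leftover $-\rho_2\int\Phi_\epsilon$. The shift comes entirely from the DJLW expansion with the effective potential $h_1 e^{-w_{p_0}}$ that you already identified: the relevant coefficient involves $\Delta_g\log(h_1 e^{-w_{p_0}})(p_0)=\Delta_g\log h_1(p_0)-\Delta_g w_{p_0}(p_0)$, and since $e^{-G_{p_0}}$ vanishes at $p_0$, equation \eqref{smfeq} gives $\Delta_g w_{p_0}(p_0)=\rho_2$, producing exactly the $-\rho_2$. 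So your Taylor expansion of $h_1 e^{-w_{p_0}}$ already contains the mechanism; no delicate neck bookkeeping is needed.
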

\begin{proof}
By substituting the test function $\tilde{\Phi}_\epsilon$ into the energy functional $J_{\rho_2}$ in \eqref{main-EF}, we obtain that
\begin{equation}\label{tf6.1*}
\begin{aligned}
J_{\rho_2}(\widetilde{\Phi}_{\epsilon})=&{\frac{1}{2}}\int_{\Sigma}|\nabla_g\Phi_\epsilon|^2dV_g+{\frac{1}{2}}\int_{\Sigma}|\nabla_g w_{p_0}|^2dV_g-\int_{\Sigma}\nabla_g\Phi_{\epsilon}\nabla_gw_{p_0}dV_g-8\pi\log\int_{\Sigma}h_1e^{(\Phi_\epsilon-w_{p_0})}dV_g\\
&-\rho_2\log\int_{\Sigma}h_2e^{-(\Phi_{\epsilon}-w_{p_0})}dV_g+(8\pi-\rho_2)\int_{\Sigma}\Phi_{\epsilon}dV_g.
\end{aligned}
\end{equation}

Using the expansion of $G_{p_0}(r,\theta)$ in \eqref{eq:Green-exp} together with the identities in \cite{DJLW}, the $\Phi_\epsilon$–only part can be computed as
\begin{equation}\label{tf6.2}
\begin{aligned}
&\frac12 \int_\Sigma |\nabla_g \Phi_\epsilon|^2 dV_g +8\pi \int_\Sigma \Phi_\epsilon dV_g -8\pi\log\int_{\Sigma}h_1e^{\Phi_{\epsilon}-w_{p_0}}dV_g\\
=&-8\pi-8\pi\log\pi-4\pi A(p_0)-8\pi \log (h_1e^{-w_{p_0}})(p_0)+16\pi^2 \Big( 1-\frac{K(p_0)}{4\pi}+\frac{b_1^2+b_2^2}{8\pi}\\
&\quad+\frac{\Delta_g (h_1 e^{-w_{p_0}})(p_0)}{8\pi h_1e^{-w_{p_0}}(p_0)}+\frac{(k_1b_1+k_2b_2)}{4\pi h_1 e^{-w_{p_0}}(p_0)}\Big)\cdot\epsilon (-\log\epsilon) + o(\epsilon(-\log\epsilon)),
\end{aligned}
\end{equation}
where $(k_1,k_2):=\nabla_g(h_1e^{-w_{p_0}})(p_0)$, and
\begin{equation}\label{tf6.3}
\begin{aligned}
\int_{\Sigma}\Phi_{\epsilon}dV_g=&\log\epsilon-2\pi\alpha^2\epsilon\log\big(\frac{\alpha^2+1}{\alpha^2}\big)-2\pi\epsilon\log\big(\alpha^2+1\big)-A(p_0)-2\log\big(\frac{\alpha^2+1}{\alpha^2}\big)\big(1-|B_{\alpha\sqrt{\epsilon}}|\big)\\
&\quad+O\big(\alpha^4\epsilon^2\log(\alpha^2\epsilon)\big).
\end{aligned}
\end{equation}

Moreover, we have

\begin{equation}\label{tf6.5**}
\begin{aligned}
\int_{\Sigma}\nabla_g\Phi_{\epsilon}\nabla_gw_{p_0}dV_g
=&\int_{\Sigma}\nabla_gG_{p_0}\nabla_g w_{p_0}dV_g+2\rho_2\pi\epsilon\log(\alpha^2+1)+O(\epsilon),
\end{aligned}
\end{equation}
\begin{equation}\label{tf6.6}
\begin{aligned}
\log\int_{\Sigma}h_2e^{-(\Phi_{\epsilon}-w_{p_0})}dV_g
&=-\log\epsilon+2\log(\frac{\alpha^2+1}{\alpha^2})+A(p_0)+\log\int_{\Sigma}h_2e^{-G_{p_0}+w_{p_0}}dV_g+O(\alpha^4\epsilon^3).
\end{aligned}
\end{equation}

Substituting \eqref{tf6.2}--\eqref{tf6.6} into \eqref{tf6.1*} and using $\int_\Sigma \nabla_g G_{p_0}\nabla_g w_{p_0}dV_g=8\pi w_{p_0}(p_0)$, we obtain
\begin{equation}\label{tf6.7}
\begin{aligned}
J_{\rho_2}(\tilde{\Phi}_{\epsilon})=&\Big(\tilde{J}_{p_0}(w_{p_0})-4\pi A(p_0)-8\pi\log h_1(p_0)\Big)-8\pi\log\pi-8\pi-16\pi^2\Big(1-{\frac{1}{4\pi}}K(p_0)\\
&\quad+{\frac{b^2_1+b^2_2}{8\pi}}+\frac{\Delta_g (h_1 e^{-w_{p_0}})(p_0)}{8\pi h_1e^{-w_{p_0}}(p_0)}+{\frac{k_1b_1+k_2b_2}{4\pi h_1e^{-w_{p_0}}(p_0)}}\Big)\cdot\epsilon\big(-\log\epsilon\big)+o\big(\epsilon(-\log\epsilon)\big).
\end{aligned}
\end{equation}

We now analyze the coefficient of $\epsilon(-\log\epsilon)$ in \eqref{tf6.7}. 
Using the fact $\Delta_gw_{p_0}(p_0)=\rho_2$ (from \eqref{smfeq}), we deduce that 
\begin{equation}\label{tf6.8}
\begin{aligned}
&-16\pi^2\Big(1-{\frac{1}{4\pi}}K(p_0)+{\frac{b^2_1+b^2_2}{8\pi}}+\frac{\Delta_g (h_1 e^{-w_{p_0}})(p_0)}{8\pi h_1e^{-w_{p_0}}(p_0)} + {\frac{k_1b_1+k_2b_2}{4\pi h_1e^{-w_{p_0}}(p_0)}}\Big)\\
=&-2\pi\Big(8\pi-2K(x)-\rho_2+\sum_{i=1}^2\big(b_1+{\frac{k_1}{h_1e^{-w_{p_0}}(x)}}\big)^2+{\frac{\Delta_g h_1}{h_1}}-{\frac{|\nabla_gh_1|^2}{h_1^2}}\Big)|_{x=p_0}\\
=&-2\pi\Big(8\pi-2K(p_0)+\Delta_g\log h_1(p_0)-\rho_2+ \sum_{i=1}^2\big(b_i+{\frac{k_i}{h_1e^{-w_{p_0}}(p_0)}}\big)^2\Big)<0.
\end{aligned}
\end{equation}
where the strict negativity follows from the assumption
$8\pi - 2K(x) + \Delta_g\log h_1(x) - \rho_2 > 0$ for all $x\notin h_1^{-1}(\{0\})$
since $h_1(p_0)>0$ by Remark \ref{px0}.

Therefore, substituting \eqref{tf6.8} into \eqref{tf6.7} and using Proposition \ref{Compt}, we conclude that 
\begin{equation*}
\begin{aligned}
 J_{\rho_2}(\tilde{\Phi}_{\epsilon})&<\Big(\tilde{J}_{p_0}(w_{p_0})-4\pi A(p_0)-8\pi\log h_1(p_0)\Big)-8\pi\log\pi-8\pi\\
 &=\inf\limits_{p\in\Sigma}\inf\limits_{w_p\in\Gamma_p}\left\{\tilde{J}_{p}(w_{p})-4\pi A(p)-8\pi\log h_1(p)\right\}-8\pi\log\pi-8\pi.
\end{aligned}
\end{equation*}
This completes the proof of Proposition \ref{TFthm}.
\end{proof}

\subsection{Convergence to a stationary solution}
In this subsection, we are now in position to complete the proof of our main theorem.  In particular, we shall prove the existence of a solution to \eqref{maineq-s}, provided the initial data $u_0$ is chosen suitably.


\medskip

\noindent \textbf{Proof of Theorem \ref{main-thm}.}
Let $\tilde{\Phi}_{\epsilon}$, as defined in Proposition \ref{TFthm}, be the initial datum of the flow. Using the monotonicity of the flow together with Proposition \ref{lowerbound} and Proposition \ref{TFthm}, the sequence $\{u_n\}$ does not blow up. By Lemma \ref{Bounded} together with standard elliptic estimates, we conclude that $\{\|u_n\|_{L^\infty(\Sigma)}\}$ is uniformly bounded. 
Hence, up to a subsequence, $u_n \to u_\infty$ weakly in $H^2(\Sigma)$ and strongly in $C^\alpha(\Sigma)$ as $n \to \infty$.
In particular, the limit $u_\infty$ satisfies the mean-field type equation \eqref{maineq-s}.

Now it remains to prove the convergence of the flow.
Using the estimates in Section 2, we first prove the boundedness of $\{\|u(t)\|_{C^{2,\alpha}(\Sigma)}\}$
Then, by a standard argument in parabolic theory, we show that $u(t) \to u_\infty$ in $L^2(\Sigma)$ sense.
Finally, applying the Arzela-Ascoli theorem yields that $u(t) \to u_\infty$ in $C^2(\Sigma)$.
Thus, the proof is completed once we establish the boundedness of $\{\|u(t)\|_{C^{2,\alpha}(\Sigma)}\}$ (Step 1) and the convergence $L^2(\Sigma)$ (Step 2).

\medskip

\noindent \textbf{Step 1.} To the contrary, we suppose that there exists a sequence $t_n \to \infty$ such that $\|u(t_n)\|_{C^{2,\alpha}(\Sigma)} \to \infty$.
For a fixed $T>0$, by \eqref{sec3eq1}, we can choose a sequence $s_n \to \infty$ such that
\begin{equation*}
t_n-T < s_n < t_n, \quad \text{ and } \quad \lim_{n \to \infty} \int_{\Sigma} \left|\frac{\partial u(s_n)}{\partial t}\right|^2e^{u(s_n)}dV_g = 0. 
\end{equation*}
\par

Since $J_{\rho_2}(u_0)$ is less than the lower bound in Proposition \ref{lowerbound}, the sequence $u_n:=u(s_n)$ does not blow up.
Applying the results in Section 3 to $u(s_n)$ (see Lemma \ref{Bounded} (2)), we obtain
\[u_1^n=u(s_n)-\log\int_\Sigma h_1 e^{u(s_n)}dV_g, \ \ u_2^n=-u_n-\log\int_\Sigma h_2 e^{-u(s_n)}dV_g \le C \ \  \text{ on } \ \ \Sigma,\]
for all $n \in \mathbb{N}$.
Moreover, since $\int_\Sigma h_1 e^{u(t)}dV_g \le C$ for all $t\ge 0$ (see Lemma \ref{Bounded-F}), applying the $L^p$-estimate for elliptic equations to \eqref{Original.Eq}, we obtain the boundedness of $\{\|u(s_n)\|_{H^2(\Sigma)}\}$.

By Proposition \ref{H2}, we deduce that $\|u(t)\|_{H^2(\Sigma)} \le C$ for all $t \in [s_n, s_n+2T)$ and $n\in \mathbb{N}$.
In addition, by the arguments in the proof of Proposition \ref{C2alpha}, we have
\[ \|u(x,t)\|_{C^{\alpha,\alpha/2}(\Sigma \times [s_n,s_n+2T))} \le C,\]
uniformly in $n\in \mathbb{N}$.
Then, by the Schauder estimates for parabolic equations, we have
\[ \|u(x,t)\|_{C^{2+\alpha,1+\alpha/2}(\Sigma \times [s_n,s_n+2T))} \le C,\]
and it contradicts the assumption for $t_n$.
This completes the proof of Step 1.

\medskip

\noindent\textbf{Step 2.} First, we observe that the energy functional $J_{\rho_2}:H^1(\Sigma) \to \R$ is analytic and $J_{\rho_2}'(H^2(\Sigma)) \subset L^2(\Sigma)$.
Moreover, for any critical point $u_\infty \in C^\infty(\Sigma)$ of $J_{\rho_2}$, the second derivative
$J_{\rho_2}''(u_\infty): H^1(\Sigma) \to H^{-1}(\Sigma)$ is a Fredholm operator with index 0.
By {\L}ojasiewicz-Simon gradient inequality (see \cite[Theorem 2]{PM}), there exist constants $Z\in (0,\infty)$, $\sigma\in(0,1]$ and $\theta\in[{\frac{1}{2}},1),$ such that for all $u\in H^2(\Sigma)$ with $\|u-u_{\infty}\|_{H^2(\Sigma)}<\sigma$,
\begin{equation*}\label{Gcs2.1}
Z\big|J_{\rho_2}(u)-J_{\rho_2}(u_{\infty})\big|^{\theta}
\leq \|J_{\rho_2}'(u)\|_{L^2(\Sigma)}
\end{equation*}
Since $u_n=u(t_n) \to u_\infty$, we can apply this inequality to the flow $u(t)$ for $t \in [t_n, T]$, where $\|u_n-u_\infty\|_{L^2(\Sigma)}  \ll \sigma$ and $T:=\inf\left\{t>t_{n}:\ \|u(t)-u_{\infty}\|_{L^2(\Sigma)}\geq\sigma\right\}$.
Then it follows that
\begin{equation*}\begin{aligned}\label{Gcs2.3}
-{\frac{d}{dt}}\big(J_{\rho_2}(u(t))-J_{\rho_2}(u_{\infty})\big)^{1-\theta} & =-(1-\theta)\big(J_{\rho_2}(u(t))-J_{\rho_2}(u_{\infty})\big)^{-\theta} \left\|e^{u(t)/2}\frac{\partial u(t)}{\partial t}\right\|_{L^2(\Sigma)}^2 \\
&\ge c(1-\theta)\left\|\frac{\partial u(t)}{\partial t}\right\|_{L^2(\Sigma)},
\end{aligned}\end{equation*}
for some $c>0$.
Consequently, for $s \in (t_n, T)$, we obtain
\begin{equation*}
\|u(s)-u(t_n)\|_{L^2(\Sigma)} \le \int_{t_n}^s \left\|\frac{\partial u(t)}{\partial t}\right\|_{L^2(\Sigma)} dt \le \frac{1}{c(1-\theta)}\big(J_{\rho_2}(u(s))-J_{\rho_2}(u_{\infty})\big)^{1-\theta}.
\end{equation*}

Choose $n$ sufficiently large so that $T=\infty$ and $\int_{t_n}^\infty \left\|\frac{\partial u(t)}{\partial t}\right\|_{L^2(\Sigma)} dt <\infty$.
This completes the proof of the convergence in $L^2(\Sigma)$.
 $\Box$

\end{document}